\documentclass[orivec]{llncs}

\usepackage{amssymb,amsmath}
\usepackage[table]{xcolor}
\setcounter{secnumdepth}{3}
\setlength{\fboxsep}{0pt}
\usepackage{graphicx}

\usepackage{url}
\usepackage{fca}
\usepackage{graphics}
\usepackage{alltt}
\usepackage{tikz}
\usepackage{float}
\usepackage{enumitem}
\usepackage{fontawesome}
\usepackage{multicol}

\newcommand\restr[2]{{
  \left.\kern-\nulldelimiterspace 
  #1 
  \vphantom{\big|} 
  \right|_{#2} 
  }}

\DeclareMathOperator{\ext}{ext}

\DeclareMathOperator{\dom}{dom}
\DeclareMathOperator{\df}{def}
\DeclareMathOperator{\rng}{rng}
\DeclareMathOperator{\schema}{schema}

\newcommand{\keywords}[1]{\par\addvspace\baselineskip
\noindent\keywordname\enspace\ignorespaces#1}

\usetikzlibrary{positioning}

\begin{document}
\mainmatter
\title{Orbital Semilattices}

\author{Jens K\"{o}tters and Stefan E. Schmidt}

\institute{Technische Universit\"{a}t Dresden}

\maketitle

\begin{abstract}
Orbital semilattices are introduced as bounded semilattices that are, in addition,
equipped with an outer multiplication (a semigroup action) and diagonals (a concept borrowed from cylindric algebra),
where each semilattice element has a certain domain.
An example of an orbital semilattice is a table algebra, where the domain is the
schema, the diagonals are diagonal relations, and outer multiplication encodes
renaming, projection and column duplication. It is shown that each orbital semilattice
can be represented by a subalgebra of such a table algebra.
\keywords{Relational Algebra, Cylindric Algebra, Lattice Theory}
\end{abstract}
\section{Motivation}
Concept lattices of relational structures~\cite{Ko13} have concepts with tables (as formalized in database theory~\cite{AHV95}) as their extents.
Orbital semilattices, introduced in this paper, are an attempt to axiomatize such concept lattices, but they do not capture completeness
(of the concept lattice). We show that orbital semilattices are, up to isomorphism, the subalgebras of certain table algebras. This is analog to
how cylindric algebras would be charactarized by cylindric set algebras~\cite{HMT71}.
\section{Preliminaries}
\subsection{Functions, Partial Transformations and Named Tuples}
A \emph{partial function} $f:A \rightharpoonup B$ is a relation $f \subseteq A \times B$ such that $(a,b_1) \in f$ and $(a,b_2) \in f$
implies $b_1=b_2$. Its \emph{domain of definition} is $\df(f):=\{a \in A \mid \exists b: (a,b) \in f\}$,
and its \emph{range} is $\rng(f):=\{b \in B \mid \exists a: (a,b) \in f\}$. For the same $f$, we may write $f:X \rightarrow B$
to indicate that $X = \df(f) \subseteq A$, and say that $f$ is a \emph{function} from $X$ to $B$. Similarly,
we may write $f:X \twoheadrightarrow Y$ if, in addition, $Y = \rng(f) \subseteq B$, and say that $f:X \twoheadrightarrow Y$ is a \emph{surjection}.
If $f$ is injective, we may say that $f:\df(f) \twoheadrightarrow \rng(f)$ is a \emph{bijection}. As usual, $f^{-1}$ denotes the inverse.
For an arbitrary $f:A \rightharpoonup B$, we write $f^{-1}(Z):=\{a \in \df(f) \mid f(a) \in Z\}$ to denote the \emph{preimage} of $Z$ under $f$.
The expression $\tfrac{z_1\dots z_n}{y_1\dots y_n}$ (where $y_1,\dots,y_n$ must be different from each other) is used to explicitly denote
a function $f:\{y_1,\dots,y_n\} \rightarrow \{z_1,\dots,z_n\}$ with $f(y_1)=z_1, \dots, f(y_n)=z_n$.

Throughout the paper, $\mathbf{var}=\{x_1,x_2,x_3,\dots\}$ denotes a countably infinite set of variables,
and $x_i$ always refers to the $i$-th variable in the given enumeration. Other variables, like $x,y,y_1,y_2,\dots$ are used
for arbitrary elements of $\mathbf{var}$.
An element of $\mathcal{T}_{\mathrm{fp}}(\mathbf{var}) := \{f:\mathbf{var} \rightharpoonup \mathbf{var} \mid \df(f) \text{ is finite}\}$
is called a \emph{finite partial transformation} on $\mathbf{var}$. A \emph{partial identity} is an element $\pi_X \in \mathbf{var}$
with $\pi_X:X \rightarrow \mathbf{var}$ and $\pi_X(x)=x$ for all $x \in X$. The composition $\mu \circ \lambda$ is defined for arbitrary
$\mu,\lambda \in \mathcal{T}_{\mathrm{fp}}(\mathbf{var})$, i.e. $\mu$ and $\lambda$ are composed as relations;
we do not require $\rng(\lambda) \subseteq \df(\mu)$. Accordingly, for arbitrary $\lambda \in \mathcal{T}_{\mathrm{fp}}(\mathbf{var})$
and $Z \subseteq \mathbf{var}$, we define the \emph{restriction} $\lambda|_Z:=\lambda \circ \pi_Z$
and the \emph{astriction} $\lambda|^{Z}:=\pi_Z \circ \lambda$. Using the astriction, an arbitrary composition $\mu \circ \lambda$ can
be represented as a composition of functions (in the usual sense), since $\mu \circ \lambda = \mu \circ \lambda|^{\df(\mu)}$
and $\lambda|^{\df(\mu)}:\lambda^{-1}(\df(\mu)) \rightarrow \df(\mu)$. Every $f \in \mathcal{T}_{\mathrm{fp}}(\mathbf{var})$
has a right inverse $f^{-r} \in \mathcal{T}_{\mathrm{fp}}(\mathbf{var})$ so that $f \circ f^{-r}=\pi_{\rng(f)}$.
We may e.g. set $f^{-r}(z)$ to the first variable $y$ in $x_1,x_2,x_3,\dots$ with $f(y)=z$.
An $f \in \mathcal{T}_{\mathrm{fp}}(\mathbf{var})$ is a \emph{folding} if $f \circ f = f$.
\begin{proposition}
\label{folding0}
An $f \in \mathcal{T}_{\mathrm{fp}}(\mathbf{var})$ is a folding if and only if $f \circ \pi_{\rng(f)} = \pi_{\rng(f)}$.
\end{proposition}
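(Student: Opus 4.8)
The plan is to reduce everything to the single right-inverse identity $f \circ f^{-r} = \pi_{\rng(f)}$ recorded above, together with the associativity of composition (which is available because the operation here is composition of relations, and relation composition is associative). Writing $R := \rng(f)$, the intuitive content of both sides is that $f$ acts as the identity on its own range; rather than unfold this pointwise, I would derive each implication purely by rewriting compositions, using the right inverse as the lever.

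For the forward direction, assume $f \circ f = f$. Substituting the right inverse and reassociating gives
\[
f \circ \pi_R = f \circ (f \circ f^{-r}) = (f \circ f) \circ f^{-r} = f \circ f^{-r} = \pi_R,
\]
which is exactly the claimed identity. The only facts used are the defining property of $f^{-r}$, associativity, and the idempotency hypothesis.

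For the converse, assume $f \circ \pi_R = \pi_R$. The key preliminary observation is that astricting $f$ to the whole of its range changes nothing, i.e. $\pi_{\rng(f)} \circ f = f$, since every value of $f$ already lies in $R$. Then
\[
f \circ f = f \circ (\pi_R \circ f) = (f \circ \pi_R) \circ f = \pi_R \circ f = f,
\]
so $f$ is a folding.

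The steps themselves are one-line rewrites, so the real work — and the only place an error could creep in — is the bookkeeping around the relation-composition convention: I would carefully confirm that $\pi_{\rng(f)} \circ f = f$ (astriction to the full range is inert) and that $f \circ \pi_{\rng(f)}$ is genuinely the restriction $f|_R$, with domains coming out as intended, and verify that associativity is being applied to equal \emph{relations} rather than to partial functions that merely agree where both are defined. Recognizing that the right-inverse identity $f \circ f^{-r} = \pi_{\rng(f)}$ is precisely what converts the idempotency equation into a statement about $\pi_R$ is the one non-routine idea; everything else is formal manipulation.
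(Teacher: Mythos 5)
Your proof is correct and is essentially identical to the paper's: the forward direction rewrites $f \circ \pi_{\rng(f)}$ via the right-inverse identity $f \circ f^{-r} = \pi_{\rng(f)}$, and the converse uses the (correct) inertness of astriction to the full range, $\pi_{\rng(f)} \circ f = f$, exactly as the paper does. Your additional remark justifying that astriction identity and the validity of associativity for relation composition only makes explicit what the paper leaves tacit.
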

\begin{proof}
"$\Rightarrow$": If $f \circ f = f$, then $f \circ \pi_{\rng(f)} = f \circ f \circ f^{-r} = f \circ f^{-r} = \pi_{\rng(f)}$.
"$\Leftarrow$": We obtain $f \circ f = f \circ (\pi_{\rng(f)} \circ f) = (f \circ \pi_{\rng(f)}) \circ f = \pi_{\rng(f)} \circ f = f$.
\qed
\end{proof}

\begin{lemma}[Decomposition Lemma]
\label{decomp0}
Every finite partial transformation $f:A \rightarrow D$ can be decomposed into a folding $\delta:A \twoheadrightarrow B$,
a bijection $\sigma:B \twoheadrightarrow C$ and a partial identity $\pi:C \rightarrow D$,
so that $f = \pi \circ \sigma \circ \delta$.
\end{lemma}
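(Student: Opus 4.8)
The plan is to build the three factors explicitly from $f$ and its right inverse $f^{-r}$, which was introduced in the preliminaries. The conceptual picture is that $\delta$ should collapse each fibre $f^{-1}(\{z\})$ onto a single chosen representative, $\sigma$ should then carry these representatives bijectively onto the values in $\rng(f)$, and $\pi$ should merely reinterpret $\rng(f)$ as a subset of $D$.

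Concretely, I would set $\delta := f^{-r} \circ f$ and $B := \rng(f^{-r})$. First I would check the types: since $\df(f)=A$ and $\rng(f) \subseteq \df(f^{-r})$, we have $\df(\delta)=A$, and $\rng(\delta)=B$ follows because every $b=f^{-r}(z) \in B$ (with $z \in \rng(f)$) is hit by any $a$ with $f(a)=z$. That $\delta$ is a folding is the one step with actual content: using $f \circ f^{-r}=\pi_{\rng(f)}$ together with $\pi_{\rng(f)} \circ f = f$ (since $f(a) \in \rng(f)$ is fixed by $\pi_{\rng(f)}$), one computes $\delta \circ \delta = f^{-r} \circ f \circ f^{-r} \circ f = f^{-r} \circ \pi_{\rng(f)} \circ f = f^{-r} \circ f = \delta$; alternatively one can invoke Proposition~\ref{folding0}. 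Hence $\delta:A \twoheadrightarrow B$ is a folding.

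Next I would put $C := \rng(f)$ and $\sigma := f|_B = f \circ \pi_B$, the restriction of $f$ to the representatives, and take $\pi := \pi_C$ regarded as a map $C \rightarrow D$ (legitimate because $\rng(f) \subseteq D$). The map $\sigma$ is surjective onto $C$ since $f(B)=\rng(f)$, and it is injective because $f^{-r}$ selects exactly one preimage per value, so distinct elements of $B$ lie in distinct fibres; thus $\sigma:B \twoheadrightarrow C$ is a bijection, and $\pi:C \rightarrow D$ is a partial identity by construction.

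Finally I would verify the factorisation by a direct computation: for $a \in A$ we get $\sigma(\delta(a)) = f\bigl(f^{-r}(f(a))\bigr) = \pi_{\rng(f)}(f(a)) = f(a)$, again using $f \circ f^{-r}=\pi_{\rng(f)}$, so that $\sigma \circ \delta = f$, and then $\pi \circ \sigma \circ \delta = \pi_{\rng(f)} \circ f = f$. The only genuine obstacles are the idempotency (folding) check for $\delta$ and the injectivity of $\sigma$; both reduce cleanly to the defining identity $f \circ f^{-r}=\pi_{\rng(f)}$ of the right inverse, so beyond careful tracking of domains and ranges no real difficulty remains.
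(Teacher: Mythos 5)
Your decomposition is precisely the paper's one-line proof $f = \pi_{\rng(f)} \circ (f^{-r})^{-1} \circ (f^{-r} \circ f)$: your folding $\delta = f^{-r} \circ f$ and partial identity $\pi_{\rng(f)}$ match exactly, and your bijection $\sigma = f|_B$ coincides with the paper's $(f^{-r})^{-1}$, since $f(f^{-r}(z)) = z$ for all $z \in \rng(f)$. The proposal is correct and takes essentially the same approach, merely spelling out the verifications the paper leaves implicit.
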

\begin{proof}
The decomposition is $f = \pi_{\rng(f)} \circ (f^{-r})^{-1} \circ (f^{-r} \circ f)$.
\qed
\end{proof}
For a given set $G$, the set $\mathrm{NTup}(G) := \{t:\mathbf{var} \rightharpoonup G \mid \df(t) \text{ is finite}\}$
contains the \emph{named tuples} over $G$. The \emph{empty tuple} $\langle\rangle$ is the unique tuple
with $\df(\langle\rangle)=\emptyset$. We write $t \leq \tilde{t}$ if $\tilde{t} \circ \pi_{\df(t)} = t$ (i.e. if $\tilde{t}$ extends $t$),
and $t = t_1 \oplus t_2$ if $t_1,t_2 \leq t$ with $\rng(t) = \rng(t_1) \cup \rng(t_2)$.
%

\section{Orbital Semilattices}
\label{orbital1}
\subsection{Axiomatic Definition}
\begin{definition}[Orbital Semilattice]
An \emph{orbital semilattice} is an algebraic structure $(V,\wedge,0,1,\cdot,(d_{xy})_{x,y \in \mathbf{var}},\dom)$
consisting of an infimum operation $\wedge$, a \emph{bottom element} $0$, a \emph{top element} $1$,
a right multiplication $\cdot:V \times \mathcal{T}_{\mathrm{fp}}(\mathbf{var}) \rightarrow V$, a \emph{diagonal} $d_{xy} \in V$
for each $(x,y) \in \mathbf{var} \times \mathbf{var}$, and a \emph{domain function} $\dom:V \rightarrow \mathfrak{P}(\mathbf{var})$,
such that $(V,\wedge,0,1)$ is a bounded semilattice and the following axioms hold
for all $u,v \in V$, $\lambda,\mu \in \mathcal{T}_{\mathrm{fp}}(\mathbf{var})$, $x,y \in \mathbf{var}$
and $Y \in \mathfrak{P}_{\mathrm{fin}}(\mathbf{var})$.
\begin{multicols}{2}
\begin{enumerate}[label={\bf(A\arabic*)},leftmargin=1cm]
\item \label{axone0} $u \neq 0 \,\Rightarrow \, u \cdot \pi_{\emptyset} = 1$
\item \label{axzero0} $0 \cdot \lambda = 0$
\item \label{axdist0} $\dom(u) \subseteq Y \\\Rightarrow (u \wedge v) \cdot \pi_Y = u \wedge v \cdot \pi_Y$
\item \label{axproj0} $u \leq u \cdot \pi_Y$
\item \label{axmult0} $u \leq v \rightarrow u \cdot \lambda \leq v \cdot \lambda$
\item \label{axequal0} $(0 \neq u \leq \mathrm{d}_{xy} \text{ and } x \neq y) \\
    \Rightarrow u = (u \cdot \pi_{\dom(u) \setminus \{y\}}) \wedge d_{xy}$
\item \label{axsemi0} $(u \cdot \lambda) \cdot \mu = u \cdot (\lambda \circ \mu)$
\item \label{axneut0} $u \cdot \pi_{\dom(u)} = u$
\item \label{axdxx0} $d_{xx} \neq 0$
\item \label{axdxy0} $d_{xy} = d_{xx} \cdot \tfrac{xx}{xy}$
\item \label{axpreimg0} $u \neq 0 \\\Rightarrow \dom(u \cdot \lambda) = \lambda^{-1}(\dom(u))$
\item \label{axfinite0} $u \neq 0 \,\Rightarrow \, \dom(u) \mbox{ is finite}$
\item \label{axdom0} $\dom(u) = \{x \in \mathbf{var} \mid u \leq d_{xx}\}$
\end{enumerate}
\end{multicols}
\end{definition}
\subsection{Example: Table Algebra}
\label{tables0}
Let $G$ be a nonempty set. The elements of $\mathrm{Tab}(G):=\bigcup_{X \in \mathfrak{P}_{\mathrm{fin}}(\mathbf{var})} G^X$
are called \emph{tables} over $G$, and we set
\begin{align}
\label{schema0}
\schema(T) := \begin{cases}
X \quad \text{if } T \neq \emptyset \text{ and } T \subseteq G^X \\
\mathbf{var} \quad \text{if } T = \emptyset
\end{cases}
\quad.
\end{align}
\begin{definition}
\label{natjoin0}
The \emph{natural join} of $T_1,T_2 \in \mathrm{Tab}(G)$ is defined by
\begin{align*}
T_1 \Join T_2 := \{t \in G^{\schema(T_1) \cup \schema(T_2)} \mid t|_{\schema(T_1)} \in T_1 \text{ and } t|_{\schema(T_2)} \in T_2\} \;.
\end{align*}
\end{definition}
The natural join is associative, commutative and idempotent; in other words, $(\mathrm{Tab}(G),\Join)$ is an algebraic semilattice.
In the associated order $T_1 \leq T_2 :\Leftrightarrow T_1 \Join T_2 = T_1$, the natural join $\Join$ is the infimum.
For all $T \in \mathrm{Tab}(G)$ we have $\emptyset \Join T = \emptyset$ and $T \Join \{\langle\rangle\} = T$, which means
$\emptyset \leq T \leq \{\langle\rangle\}$ (so $\emptyset$ and $\{\langle\rangle\}$ are the bottom element and
top element, respectively, in $\mathrm{Tab}(G)$). The order is characterized in the following proposition.
\begin{proposition}
\label{order0}
Let $T_1,T_2 \in \mathrm{Tab}(G)$. Then
\begin{align}
T_1 \leq T_2 \;\Leftrightarrow\; \{t|_{\schema(T_2)} \mid t \in T_1\} \subseteq T_2 \quad.
\end{align}
\end{proposition}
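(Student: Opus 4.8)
The plan is to prove the two implications separately, handling the empty-table cases with care: the schema convention $\schema(\emptyset)=\mathbf{var}$ makes the empty table behave unlike a nonempty one (whose schema is a \emph{finite} set equal to the common domain of its tuples), so I expect the genuine content of the proof to sit in these boundary cases rather than in the main computation.

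First I would dispatch the forward direction, which needs no case distinction. Assuming $T_1 \leq T_2$, i.e. $T_1 \Join T_2 = T_1$, I would take an arbitrary $t \in T_1 = T_1 \Join T_2$ and read off directly from Definition \ref{natjoin0} that $t|_{\schema(T_2)} \in T_2$. Since $t$ was arbitrary, this already yields $\{t|_{\schema(T_2)} \mid t \in T_1\} \subseteq T_2$, so this direction is essentially immediate.

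For the converse I would assume $\{t|_{\schema(T_2)} \mid t \in T_1\} \subseteq T_2$ and split on whether $T_1$ is empty. If $T_1 = \emptyset$, then $T_1 \Join T_2 = \emptyset \Join T_2 = \emptyset = T_1$ by the identity already noted in the text, so $T_1 \leq T_2$ holds trivially. If $T_1 \neq \emptyset$, I would write $X_1 := \schema(T_1)$ (finite, with $T_1 \subseteq G^{X_1}$) and observe that the image $\{t|_{\schema(T_2)} \mid t \in T_1\}$ is then nonempty, so it cannot be contained in $\emptyset$; hence $T_2 \neq \emptyset$ and $\schema(T_2) =: X_2$ is finite.

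The key step — and the one I expect to be the main obstacle — is to notice that the hypothesis silently constrains the schemas. Each $t \in T_1$ has domain exactly $X_1$, so $t|_{X_2}$ has domain $X_1 \cap X_2$; for $t|_{X_2}$ to lie in $T_2 \subseteq G^{X_2}$ it must be a total function on $X_2$, which forces $X_1 \cap X_2 = X_2$, that is $X_2 \subseteq X_1$. Once this inclusion is in hand the remainder is routine: from $X_1 \cup X_2 = X_1$ together with the fact that every $t \in G^{X_1}$ satisfies $t|_{X_1} = t$, Definition \ref{natjoin0} collapses to $T_1 \Join T_2 = \{t \in T_1 \mid t|_{X_2} \in T_2\}$, and the assumed inclusion makes the side condition automatic, giving $T_1 \Join T_2 = T_1$. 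As a sanity check I would verify the degenerate reading with $T_1 \neq \emptyset$ and $T_2 = \emptyset$: there $t|_{\mathbf{var}} = t$, so the hypothesis reads $T_1 \subseteq \emptyset$, which fails — consistent with $T_1 \not\leq \emptyset$ and already absorbed by the argument above, since a nonempty $T_1$ forces $T_2 \neq \emptyset$.
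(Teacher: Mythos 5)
Your proof is correct and takes essentially the same route as the paper: the pivotal step in both is that $t|_{\schema(T_2)} \in T_2$ forces $\schema(T_2) \subseteq \schema(T_1)$ (you phrase this via totality of the restriction on $\schema(T_2)$, the paper via $\schema(T_2) = \df(t|_{\schema(T_2)}) \subseteq \df(t)$), after which membership in the join is immediate. Your explicit case split on $T_1 = \emptyset$ and the direct computation $T_1 \Join T_2 = \{t \in T_1 \mid t|_{\schema(T_2)} \in T_2\} = T_1$ merely make explicit what the paper handles vacuously and by antisymmetry of the order.
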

\begin{proof}
"$\Rightarrow$": Let $t \in T_1 = T_1 \Join T_2$. Then by Def.~\ref{natjoin0} we have $t|_{\schema(T_2)} \in T_2$.
"$\Leftarrow$": Let $t \in T_1$. Trivially $t|_{\schema(T_1)} \in T_1$, and by assumption $t|_{\schema(T_2)} \in T_2$;
the latter implies $\schema(T_2) = \df(t|_{\schema(T_2)}) \subseteq \df(t) = \schema(T_1)$,
and thus $\df(t) = \schema(T_1) = \schema(T_1) \cup \schema(T_2)$, so we obtain $t \in T_1 \Join T_2$ from Def.~\ref{natjoin0}.
This shows $T_1 \leq T_1 \Join T_2$. But also $T_1 \Join T_2 \leq T_1$ (the natural join is the infimum),
so $T_1 = T_1 \Join T_2$, i.e. $T_1 \leq T_2$.
\qed
\end{proof}
The right multiplication $\cdot: \mathrm{Tab}(G) \times \mathcal{T}_{\mathrm{fp}}(\mathbf{var}) \rightarrow \mathrm{Tab}(G)$, defined by
\begin{align}
\label{rmult3}
T \cdot \lambda := \{ t \circ \lambda \mid t \in T \} \quad,
\end{align}
is a semigroup action, i.e. it satisfies $T \cdot \lambda \cdot \mu = T \cdot (\lambda \circ \mu)$
for all $T \in \mathrm{Tab}(G)$ and $\lambda,\mu \in \mathcal{T}_{\mathrm{fp}}(\mathbf{var})$.
For all $x,y \in \mathbf{var}$,
the \emph{diagonal} $E_{xy}$ is the table
\begin{align}
\label{exy0}
E_{xy} := \{t \in G^{\{x,y\}} \mid t(x)=t(y)\} \quad.
\end{align}
\begin{theorem}
\label{orbital0}
Let $G \neq \emptyset$. The 7-tuple $(\mathrm{Tab}(G),\Join,\emptyset,\{\langle\rangle\},\cdot,(E_{xy})_{x,y \in \mathrm{var}},\schema)$
is an orbital semilattice.
\end{theorem}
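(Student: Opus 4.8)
The plan is to verify each of the thirteen axioms (A1)--(A13) directly for the concrete operations of the table algebra. Two computational facts will be used throughout: first, that right multiplication by a partial identity is projection, $T \cdot \pi_Y = \{t|_Y \mid t \in T\}$, which follows from $t \circ \pi_Y = t|_Y$; and second, the order characterization of Proposition~\ref{order0}. A recurring pattern is to split each axiom into a trivial case $T = \emptyset$ (handled by $\emptyset \cdot \lambda = \emptyset$, so that (A2) is immediate and the remaining axioms hold vacuously or by the bottom-element property) and a main case $T \neq \emptyset$.

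The easy axioms I would dispatch first. For (A1), for nonempty $T$ every $t \circ \pi_\emptyset$ equals $\langle\rangle$, so $T \cdot \pi_\emptyset = \{\langle\rangle\} = 1$. Axiom (A7) is exactly the semigroup-action identity already noted after~\eqref{rmult3}. For (A8) with $T \neq \emptyset$ of schema $X$, each $t|_X = t$, so $T \cdot \pi_X = T$ (the case $T = \emptyset$ is vacuous, since $\pi_{\schema(\emptyset)} = \pi_{\mathbf{var}}$ is not a finite partial transformation). Axiom (A9) holds because $E_{xx} = G^{\{x\}} \neq \emptyset$, and (A10) is a direct computation: composing $t \in G^{\{x\}}$ with $\tfrac{xx}{xy}$ yields the tuple on $\{x,y\}$ with both entries equal to $t(x)$, which ranges exactly over $E_{xy}$. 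For (A11) I observe that $t \circ \lambda$ always has domain $\lambda^{-1}(\df(t))$, and that $T \neq \emptyset$ forces $T \cdot \lambda \neq \emptyset$, so its schema is $\lambda^{-1}(\schema(T))$. Axiom (A12) is immediate from $\mathrm{Tab}(G) = \bigcup_{X \in \mathfrak{P}_{\mathrm{fin}}} G^X$. For (A4), $\schema(T \cdot \pi_Y) = X \cap Y$ and $t|_{X \cap Y} = t|_Y$ for $t \in T$, so the set in Proposition~\ref{order0} is contained in (in fact equals) $T \cdot \pi_Y$. Axiom (A13) I verify by noting that $T \leq E_{xx}$ holds iff $x \in \schema(T)$, since $t|_{\{x\}}$ lies in $G^{\{x\}}$ precisely when $x \in \df(t)$; the case $T = \emptyset$ works because $\schema(\emptyset) = \mathbf{var}$ and $\emptyset$ lies below everything.

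The three substantive axioms are (A3), (A5) and (A6), and these are where the real work lies. For monotonicity (A5), after reducing to $T_1, T_2 \neq \emptyset$ (so that $\schema(T_2) \subseteq \schema(T_1)$ by Proposition~\ref{order0}), the key identity is $(t \circ \lambda)|_{\lambda^{-1}(X_2)} = (t|_{X_2}) \circ \lambda$, which lets me push the restriction defining the order through the multiplication; since $t|_{X_2} \in T_2$, its image lies in $T_2 \cdot \lambda$, giving the claim via Proposition~\ref{order0}. For (A6), the crucial observation is that $0 \neq T \leq E_{xy}$ with $x \neq y$ forces $\{x,y\} \subseteq \schema(T)$ and $t(x) = t(y)$ for every $t \in T$; hence the $y$-entry is redundant, and projecting it away and rejoining with $E_{xy}$ reconstructs each tuple exactly, because any $s$ on the right-hand side agrees with some $t \in T$ off $y$ and satisfies $s(y) = s(x) = t(x) = t(y)$.

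I expect (A3), the commutation of projection with join, to be the main obstacle, since it is the one identity whose reverse inclusion genuinely uses the hypothesis $\schema(T_1) \subseteq Y$ and requires building a witness tuple. The forward inclusion is routine. For the reverse, given a tuple $u$ in $T_1 \Join (T_2 \cdot \pi_Y)$, I would combine $u|_{\schema(T_1)} \in T_1$ with a witness $r \in T_2$ satisfying $r|_{\schema(T_2) \cap Y} = u|_{\schema(T_2) \cap Y}$ into a single tuple $s := u|_{\schema(T_1)} \oplus r$ on $\schema(T_1) \cup \schema(T_2)$; the hypothesis $\schema(T_1) \subseteq Y$ guarantees that $u$ and $r$ agree on the overlap $\schema(T_1) \cap \schema(T_2)$, so $s$ is well-defined, lies in $T_1 \Join T_2$, and restricts on $Y$ back to $u$. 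Throughout, the empty-table boundary cases must be checked separately, as $\schema(\emptyset) = \mathbf{var}$ behaves differently from the finite schemas.
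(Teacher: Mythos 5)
Your proposal is correct and takes essentially the same route as the paper's proof: a direct axiom-by-axiom verification in which the easy axioms are dispatched by the same one-line computations, (A5) rests on the same commutation identity $\lambda \circ \pi_{\lambda^{-1}(Z)} = \pi_Z \circ \lambda$, the reverse inclusion of (A3) uses the same witness construction $s = u|_{\schema(T_1)} \oplus r$ with $s|_Y = u$ (the paper's $\tilde{t} = t_1 \oplus t_2$), and (A6) closes with the same chain $s(y)=s(x)=t(x)=t(y)$. Your explicit attention to the empty-table boundary cases (e.g. $\schema(\emptyset)=\mathbf{var}$ in (A8) and (A13)) is if anything slightly more careful than the paper's, but it is not a different method.
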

\begin{proof}
Some properites are immediate to see, and are stated for tables below:
\ref{axone0}: $T \neq \emptyset \Rightarrow T \cdot \pi_\emptyset = \{\langle\rangle\}$.
\ref{axzero0}: $\emptyset \cdot \lambda = \emptyset$.
\ref{axsemi0}: $T \cdot \lambda \cdot \mu = T \cdot (\lambda \circ \mu)$.
\ref{axneut0}: $T \cdot \pi_{\schema(T)} = T$.
\ref{axdxx0}: $E_{xx} = G^{\{x\}} \neq \emptyset$ since $G \neq \emptyset$.
\ref{axdxy0}: $E_{xy} = E_{xx} \cdot \frac{xx}{xy}$.
\ref{axpreimg0}: $T \neq \emptyset \Rightarrow \schema(T \cdot \lambda) = \lambda^{-1}(\schema(T))$.
\ref{axfinite0}: $T \neq \emptyset \Rightarrow \schema(T) \text{ is finite}$.
\ref{axdom0}: $\schema(T) = \{x \in \mathbf{var} \mid T \leq E_{xx}\}$.

It remains to show~\ref{axproj0}, \ref{axmult0}, \ref{axdist0} and \ref{axequal0}:

\ref{axproj0}: We obtain $T \leq T \cdot \pi_{\schema(T) \cap X} = T \cdot \pi_X$ from Prop.~\ref{order0} and~\eqref{rmult3}.

\ref{axmult0}: Assume $T_1 \leq T_2$. If $T_2 = \emptyset$, then also $T_1 = \emptyset$,
and $T_1 \cdot \lambda = \emptyset = T_2 \cdot \lambda$ by~\ref{axzero0}, so $T_1 \cdot \lambda \leq T_2 \cdot \lambda$ holds.
If $T_2 \neq \emptyset$, then $\schema(T_2 \cdot \lambda) = \lambda^{-1}(\schema(T_2))$ by~\ref{axpreimg0},
so for all $t \in T_1$ we have $t \circ \lambda \circ \pi_{\schema(T_2 \cdot \lambda)} = t \circ \lambda \circ \pi_{\lambda^{-1}(\schema(T_2))}
= t \circ \pi_{\schema(T_2)} \circ \lambda \in T_i \cdot \lambda$,
which shows $T_1 \cdot \lambda \leq T_2 \cdot \lambda$.

\ref{axdist0}: 
"$\leq$": We obtain $(T_1 \Join T_2) \cdot \pi_Y \leq T_2 \cdot \pi_Y$ from~\ref{axproj0},
and $(T_1 \Join T_2) \cdot \pi_Y \leq T_1 \cdot \pi_Y = T_1$ from~\ref{axproj0}, \eqref{rmult3} and the assumption $\schema(T_1) \subseteq Y$.
Taken together, this means $(T_1 \Join T_2) \cdot \pi_Y \leq T_1 \Join T_2 \cdot \pi_Y$.

"$\geq$": Let $t \in T_1 \Join T_2 \cdot \pi_Y$. Then $t = t_1 \oplus t_2|_Y$ for some $t_1 \in T_1$ and $t_2 \in T_2$,
which implies $t_1(x) = (t_2|_Y)(x)$ for all $x \in \df(t_1) \cap \df(t_2|_Y)$.
Since $\df(t_1) \subseteq Y$ by assumption, this means $t_1(x) = t_2(x)$ for all $x \in \df(t_1) \cap \df(t_2)$.
So there exists $\tilde{t}$ with $\tilde{t} = t_1 \oplus t_2 \in T_1 \Join T_2$, and thus $\tilde{t}|_Y \in (T_1 \Join T_2)|_Y$.
Next, we show $t_1 \oplus t_2|_Y = (t_1 \oplus t_2)|_Y$, which means $t=\tilde{t}|_Y$.

From the assumption $\df(t_1) \subseteq Y$, we obtain $\df(t_1 \oplus t_2|_Y) = \df((t_1 \oplus t_2)|_Y)$.
For $x \in \df(t_1)$, we have $(t_1 \oplus t_2|_Y)(x) = t_1(x) = ((t_1 \oplus t_2)|_Y)(x)$,
and for $x \in \df(t_2|_Y)$, we have $(t_1 \oplus t_2|_Y)(x) = t_2(x) = ((t_1 \oplus t_2)|_Y)(x)$.
We have thus shown $t_1 \oplus t_2|_Y = (t_1 \oplus t_2)|_Y$, or in short $t = \tilde{t}|_Y \in (T_1 \Join T_2) \cdot \pi_Y$.
This means $T_1 \Join T_2 \cdot \pi_Y \subseteq (T_1 \Join T_2) \cdot \pi_Y$,
and in particular $T_1 \Join T_2 \cdot \pi_Y \leq (T_1 \Join T_2) \cdot \pi_Y$.

\ref{axequal0}:
Assume $T \neq \emptyset$, $T \leq E_{xy}$ and $x \neq y$. Set $X:=\schema(T)\setminus\{y\}$.

"$\leq$":
Since $T \leq T \cdot \pi_X$ by~\ref{axproj0}, and $T \leq E_{xy}$,
we have $T \leq T \cdot \pi_X \Join E_{xy}$.

"$\geq$":
Let $s \in T \cdot \pi_X \Join E_{xy}$. Then there exist $t \in T$ and $e \in E_{xy}$ with
\begin{align}
\label{oplus1}
s = (t|_X) \oplus e \quad.
\end{align}
In particular $(t|_X)(x) = e(x)$ since $x \in \df(t|_X) \cap e$ (where $x \in X$ follows from $x \neq y$).
We obtain $s(y) = e(y) = e(x) = (t|_X)(x) = t(x) = t(y)$ from~\eqref{oplus1}, $e \in E_{xy}$ and $T \leq E_{xy}$;
moreover $s|_X = t|_X$ from~\eqref{oplus1}; so $s = t \in T$.
\qed
\end{proof}

\section{Derived Results}
\begin{proposition}
The following properties hold in every orbital lattice.
\label{props1}
\begin{multicols}{2}
\begin{enumerate}[label={\roman*)}]
\item $u \leq v \Rightarrow \dom(v) \subseteq \dom(u)$ \label{domain0}
\item $\dom(d_{xy}) = \{x,y\}$ \label{domdxy0}
\item $\dom(0) = \mathbf{var}$ \label{dom0}
\item $u \neq 0 \Leftrightarrow \dom(u) \text{ is finite}$ \label{finite0}
\item $u = 1 \Leftrightarrow \dom(u) = \emptyset$ \label{empty0}
\item $0 \neq 1$ \label{onezero0}
\item $1 \cdot \lambda = 1$ \label{one0}
\item $u \cdot \lambda = u \cdot \lambda|^{\dom(u)}$ \label{astr0}
\item $u \wedge v \neq 0 \\\Rightarrow \dom(u \wedge v) = \dom(u) \cup \dom(v)$ \label{dominf0}
\item $u \leq v \;\Leftrightarrow\; u \cdot \pi_{\dom(v)} \leq v$ \label{proj2}
\end{enumerate}
\end{multicols}
\end{proposition}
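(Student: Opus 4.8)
The plan is to derive the ten items in a dependency order in which each rests on the characterization \ref{axdom0} of $\dom$ together with one or two further axioms, so that almost every step reduces to a preimage computation through \ref{axpreimg0}. A natural order is \ref{domain0}, \ref{dom0}, \ref{finite0}, \ref{onezero0}, \ref{one0}, \ref{empty0}, \ref{astr0}, \ref{domdxy0}, then \ref{dominf0} and \ref{proj2}.

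First I would clear the items that read straight off \ref{axdom0}. For \ref{domain0}, if $u \le v \le d_{xx}$ then $u \le d_{xx}$ by transitivity of the semilattice order, so $\dom(v) \subseteq \dom(u)$. For \ref{dom0}, $0 \le d_{xx}$ for every $x$ because $0$ is the bottom element, hence $\dom(0) = \mathbf{var}$. Item \ref{finite0} is then immediate: "$\Rightarrow$" is \ref{axfinite0}, while "$\Leftarrow$" is the contrapositive, using that $\dom(0) = \mathbf{var}$ from \ref{dom0} is infinite. For \ref{onezero0}, if $0 = 1$ then $d_{xx} \le 1 = 0 \le d_{xx}$ forces $d_{xx} = 0$, contradicting \ref{axdxx0}.

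Next come the multiplicative items. For \ref{one0} I start from $1 \cdot \pi_\emptyset = 1$ (\ref{axone0}, valid since $1 \ne 0$ by \ref{onezero0}) and compute $1 \cdot \lambda = (1 \cdot \pi_\emptyset) \cdot \lambda = 1 \cdot (\pi_\emptyset \circ \lambda) = 1 \cdot \pi_\emptyset = 1$ via \ref{axsemi0}, using $\pi_\emptyset \circ \lambda = \pi_\emptyset$. Item \ref{astr0} is the one-line identity $u \cdot \lambda = (u \cdot \pi_{\dom(u)}) \cdot \lambda = u \cdot (\pi_{\dom(u)} \circ \lambda) = u \cdot \lambda|^{\dom(u)}$ from \ref{axneut0} and \ref{axsemi0}. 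For \ref{empty0}, "$\Leftarrow$" uses \ref{finite0} to get $u \ne 0$ from $\dom(u) = \emptyset$, and then \ref{axneut0} and \ref{axone0} give $u = u \cdot \pi_\emptyset = 1$; "$\Rightarrow$" applies \ref{axpreimg0} to $1 \ne 0$ together with \ref{one0} to get $\dom(1) = \dom(1 \cdot \pi_\emptyset) = \pi_\emptyset^{-1}(\dom(1)) = \emptyset$. For \ref{domdxy0} the clean observation is that $x \in \dom(d_{xx})$ (since $d_{xx} \le d_{xx}$), so $\rng(\tfrac{xx}{xy}) = \{x\} \subseteq \dom(d_{xx})$; as $\tfrac{xx}{xy}$ has domain $\{x,y\}$, the preimage of $\dom(d_{xx})$ is all of $\{x,y\}$. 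Since $d_{xx} \ne 0$ by \ref{axdxx0}, \ref{axpreimg0} applied to $d_{xy} = d_{xx} \cdot \tfrac{xx}{xy}$ (\ref{axdxy0}) yields $\dom(d_{xy}) = \{x,y\}$ uniformly; the diagonal case $x = y$ is included by reading $\tfrac{xx}{xy}$ as $\pi_{\{x\}}$, where the preimage is $\{x\} \cap \dom(d_{xx}) = \{x\}$.

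The main obstacle is the nontrivial inclusion in \ref{dominf0}; the remaining item \ref{proj2} is easy, so I dispatch it first. Assuming $v \ne 0$ (so $\dom(v)$ is finite by \ref{finite0}, the case $v = 0$ being routine via \ref{dom0}), "$\Rightarrow$" applies \ref{axmult0} with $\lambda = \pi_{\dom(v)}$ and then \ref{axneut0} to get $u \cdot \pi_{\dom(v)} \le v \cdot \pi_{\dom(v)} = v$, while "$\Leftarrow$" chains $u \le u \cdot \pi_{\dom(v)} \le v$ using \ref{axproj0}. For \ref{dominf0}, the inclusion $\dom(u) \cup \dom(v) \subseteq \dom(u \wedge v)$ is \ref{domain0} applied to $u \wedge v \le u$ and $u \wedge v \le v$. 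For the reverse I first record the auxiliary fact that $\dom(w) \subseteq Y$ implies $w \cdot \pi_Y = w$ (from \ref{axneut0} and \ref{axsemi0}, since $\pi_{\dom(w)} \circ \pi_Y = \pi_{\dom(w)}$). Putting $Y := \dom(u) \cup \dom(v)$, finite because $u \wedge v \ne 0$ forces $u,v \ne 0$, axiom \ref{axdist0} gives $(u \wedge v) \cdot \pi_Y = u \wedge (v \cdot \pi_Y) = u \wedge v$; then \ref{axpreimg0}, valid since $u \wedge v \ne 0$, yields $\dom(u \wedge v) = \pi_Y^{-1}(\dom(u \wedge v)) = Y \cap \dom(u \wedge v)$, i.e. $\dom(u \wedge v) \subseteq Y$, which closes the argument. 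The conceptual step is this single use of \ref{axdist0}; the recurring care is bookkeeping the domains of the partial transformations $\pi_\emptyset$, $\pi_Y$ and $\tfrac{xx}{xy}$ so that every preimage under \ref{axpreimg0} is computed correctly.
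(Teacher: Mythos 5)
Your proof is correct and matches the paper's almost step for step: the same preimage computations via \ref{axpreimg0} drive items \emph{ii)}, \emph{v)} and \emph{ix)}, the one-line \ref{axneut0}/\ref{axsemi0} identities give \emph{vii)} and \emph{viii)}, and \emph{x)} is essentially verbatim the paper's argument. The only local deviations are that in \emph{ix)} you obtain $(u \wedge v) \cdot \pi_Y = u \wedge v$ by a single application of \ref{axdist0} together with the absorption fact $\dom(v) \subseteq Y \Rightarrow v \cdot \pi_Y = v$, where the paper instead sandwiches $(u \wedge v) \cdot \pi_Y$ between bounds from \ref{axproj0} and \ref{axmult0} and concludes by antisymmetry, and that you prove \emph{vi)} directly from \ref{axdxx0} rather than deriving it from \emph{iii)} and \emph{v)} --- both variants are sound (you correctly check finiteness of $Y$ before invoking \ref{axdist0}) and neither changes the substance.
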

\begin{proof}
\ref{domain0}: This is immediate from~\ref{axdom0}.
\ref{domdxy0}: From $d_{xx} \leq d_{xx}$ and~\ref{axdom0} follows $x \in \dom(d_{xx})$, and we further obtain
$\dom(d_{xy}) = \dom(d_{xx} \cdot \tfrac{xx}{xy}) = (\tfrac{xx}{xy})^{-1}(\dom(d_{xx})) = \{x,y\}$ using~\ref{axdxy0},
\ref{axpreimg0}, \ref{axdxx0} and $x \in \dom(d_{xx})$.
\ref{dom0}: This is immediate from~\ref{axdom0}.
\ref{finite0}: This follows from~\ref{dom0} and~\ref{axfinite0}.
\ref{empty0}: Since $d_{xx} \neq 0$ by~\ref{axdxx0},
we obtain $\dom(1) = \dom(d_{xx} \cdot \pi_\emptyset)  = \pi_{\emptyset}^{-1}(\dom(d_{xx})) = \emptyset$ using~\ref{axone0} and~\ref{axpreimg0}.
Conversely, if $\dom(u)=\emptyset$, then $u \neq 0$ by~\ref{finite0}, and we obtain $u = u \cdot \pi_{\dom(u)} = u \cdot \pi_{\emptyset} = 1$
using~\ref{axneut0} and~\ref{axone0}.
\ref{onezero0}: This follows from~\ref{dom0} and~\ref{empty0}.
\ref{one0}: Considering $1 \neq 0$ by~\ref{onezero0}, we obtain $1 \cdot \lambda = 1 \cdot \pi_{\emptyset} \cdot \lambda = 1 \cdot \pi_{\emptyset} = 1$
from~\ref{axone0} and~\ref{axsemi0}.
\ref{astr0}: We obtain $u \cdot \lambda = u \cdot \pi_{\dom(u)} \cdot \lambda = u \cdot \lambda|^{\dom(u)}$ using~\ref{axneut0} and~\ref{axsemi0}.
\ref{dominf0}: We abbreviate $X:=\dom(u) \cup \dom(v)$. From $u \wedge v \leq u$ and~\ref{domain0} follows $\dom(u) \subseteq \dom(u \wedge v)$;
likewise, we obtain $\dom(v) \subseteq \dom(u \wedge v)$, and thus $X \subseteq \dom(u \wedge v)$.
Conversely, $(u \wedge v) \cdot \pi_X \leq u \cdot \pi_X = u \cdot \pi_{\dom(u)} = u$
by~\ref{axmult0}, \ref{astr0} and \ref{axneut0}, and likewise $(u \wedge v) \cdot \pi_X \leq v$,
thus $(u \wedge v) \cdot \pi_X \leq u \wedge v$. Since also $u \wedge v \leq (u \wedge v) \cdot \pi_X$ by~\ref{axproj0},
we obtain $u \wedge v = (u \wedge v) \cdot \pi_X$, so $\dom(u \wedge v) = \pi_X^{-1}(\dom(u \wedge v)) \subseteq X$
by $\ref{axpreimg0}$ and the assumption $u \wedge v \neq 0$. Altogether, we have shown $\dom(u \wedge v) = X$,
i.e. $\dom(u \wedge v) = \dom(u) \cup \dom(v)$.
\ref{proj2}: If $u \leq v$, then $u \cdot \pi_{\dom(v)} \leq v \cdot \pi_{\dom(v)} = v$ by~\ref{axmult0} and~\ref{axneut0}.
Conversely, if $u \cdot \pi_{\dom(v)} \leq v$, then $u \leq u \cdot \pi_{\dom(v)} \leq v$ by~\ref{axproj0} and the assumption.
\qed
\end{proof}

\begin{proposition}
\label{distr0}
Let $V$ be an orbital semilattice. Then
\begin{align}
\label{distr1}
(v_1 \wedge \dots \wedge v_n) \cdot \lambda = (v_1 \cdot \lambda) \wedge \dots \wedge (v_n \cdot \lambda)
\end{align}
for all $v_1,\dots,v_n \in V$ and one-to-one $\lambda \in \mathcal{T}_{\mathrm{fp}}(\mathbf{var})$
with $\dom(v_1) \cup \dots \cup \dom(v_n) \subseteq \rng(\lambda)$.
\end{proposition}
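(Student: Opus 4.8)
The plan is to prove the two inequalities in~\eqref{distr1} separately. The inclusion "$\leq$" is the easy half and needs neither hypothesis on $\lambda$: since $v_1 \wedge \dots \wedge v_n \leq v_i$ for each $i$, monotonicity of right multiplication~\ref{axmult0} gives $(v_1 \wedge \dots \wedge v_n) \cdot \lambda \leq v_i \cdot \lambda$, so the left-hand side is a common lower bound of the $v_i \cdot \lambda$ and therefore lies below their infimum. The hypotheses (one-to-one $\lambda$, and $\dom(v_1) \cup \dots \cup \dom(v_n) \subseteq \rng(\lambda)$) will only be needed for the reverse inequality.

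For "$\geq$" I would set $w := (v_1 \cdot \lambda) \wedge \dots \wedge (v_n \cdot \lambda)$ and exploit that a one-to-one $\lambda$ possesses a genuine inverse $\lambda^{-1}:\rng(\lambda) \twoheadrightarrow \df(\lambda)$ satisfying $\lambda \circ \lambda^{-1} = \pi_{\rng(\lambda)}$ and $\lambda^{-1} \circ \lambda = \pi_{\df(\lambda)}$; note $\lambda^{-1} \in \mathcal{T}_{\mathrm{fp}}(\mathbf{var})$ since $\rng(\lambda)$ is finite. The idea is to pull $w$ back through $\lambda^{-1}$. From $w \leq v_i \cdot \lambda$ and~\ref{axmult0} I obtain $w \cdot \lambda^{-1} \leq v_i \cdot \lambda \cdot \lambda^{-1}$, and by the semigroup law~\ref{axsemi0} the right side is $v_i \cdot \pi_{\rng(\lambda)}$. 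This is where the domain hypothesis enters: because $\dom(v_i) \subseteq \rng(\lambda)$ I can rewrite $v_i \cdot \pi_{\rng(\lambda)} = v_i \cdot \pi_{\dom(v_i)} \cdot \pi_{\rng(\lambda)} = v_i \cdot \pi_{\dom(v_i) \cap \rng(\lambda)} = v_i \cdot \pi_{\dom(v_i)} = v_i$, using~\ref{axneut0}, \ref{axsemi0}, and the elementary fact that a composite of partial identities is the partial identity on the intersection. Hence $w \cdot \lambda^{-1} \leq v_i$ for every $i$, and by the infimum property $w \cdot \lambda^{-1} \leq v_1 \wedge \dots \wedge v_n$.

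It then remains to push the estimate forward again. Applying~\ref{axmult0} once more yields $w \cdot \lambda^{-1} \cdot \lambda \leq (v_1 \wedge \dots \wedge v_n) \cdot \lambda$, while by~\ref{axsemi0} and the injectivity identity $\lambda^{-1} \circ \lambda = \pi_{\df(\lambda)}$ the left side equals $w \cdot \pi_{\df(\lambda)}$, which dominates $w$ by the projection axiom~\ref{axproj0}. Chaining these gives $w \leq w \cdot \pi_{\df(\lambda)} = w \cdot \lambda^{-1} \cdot \lambda \leq (v_1 \wedge \dots \wedge v_n) \cdot \lambda$, which is precisely the "$\geq$" inequality.

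I expect the main obstacle to be recognising why each hypothesis is indispensable and exactly where it is used, rather than any hard calculation. Injectivity is what upgrades the right inverse to a two-sided inverse, so that the final push-forward recovers $w$ up to the harmless projection $\pi_{\df(\lambda)}$; without it $\lambda^{-1} \circ \lambda$ would be a nontrivial folding and the argument would break down. The condition $\dom(v_i) \subseteq \rng(\lambda)$ is what lets the spurious factor $\pi_{\rng(\lambda)}$ be absorbed, and it moreover silently forces every $v_i \neq 0$ (since $\rng(\lambda)$ is finite whereas $\dom(0) = \mathbf{var}$ by~\ref{dom0}), so no separate treatment of the zero case is needed. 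No induction on $n$ is required: the finite meet is handled directly through its universal property as an infimum.
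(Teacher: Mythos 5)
Your proof is correct, and its skeleton is the same as the paper's: the easy half via monotonicity~\ref{axmult0}, then pulling $w:=(v_1\cdot\lambda)\wedge\dots\wedge(v_n\cdot\lambda)$ back through the two-sided inverse $\lambda^{-1}$, absorbing $\pi_{\rng(\lambda)}$ into each $v_i$ via the domain hypothesis, and pushing forward again with $\lambda^{-1}\circ\lambda=\pi_{\df(\lambda)}$. Where you genuinely diverge is the final step: the paper proves the \emph{equality} $w = w\cdot\pi_{\df(\lambda)}$, which costs it a case split on $w=0$, the deduction $v_1,\dots,v_n\neq 0$ from~\ref{axzero0}, and a domain computation $\dom(w)\subseteq\df(\lambda)$ through Prop.~\ref{props1}~\ref{dominf0} and~\ref{axpreimg0}; you observe that the \emph{inequality} $w \leq w\cdot\pi_{\df(\lambda)}$, which is just axiom~\ref{axproj0} and holds unconditionally, already suffices to close the chain $w \leq w\cdot\lambda^{-1}\cdot\lambda \leq (v_1\wedge\dots\wedge v_n)\cdot\lambda$. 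This eliminates the case analysis and the appeal to the derived result~\ref{dominf0} entirely. Two further points in your favour: your remark that the hypothesis $\dom(v_i)\subseteq\rng(\lambda)$ forces $v_i\neq 0$ (finiteness of $\rng(\lambda)$ versus $\dom(0)=\mathbf{var}$) is exactly what is needed to legitimise applying~\ref{axneut0} to $v_i$ (for $v_i = 0$ the expression $v_i\cdot\pi_{\dom(v_i)}$ would not even be defined, as $\pi_{\mathbf{var}}\notin\mathcal{T}_{\mathrm{fp}}(\mathbf{var})$), a subtlety the paper handles instead via its $w\neq 0$ reduction; and your universal-property formulation covers the empty meet uniformly, whereas the paper dispatches $n=0$ separately through Prop.~\ref{props1}~\ref{one0}. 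The only thing the paper's longer route buys is the explicit identity $\dom(w)=\lambda^{-1}(\dom(v_1)\cup\dots\cup\dom(v_n))$ as a by-product; your argument is shorter and rests on fewer derived facts.
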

\begin{proof}
If $n=0$, then~\eqref{distr1} amounts to $1 \cdot \lambda = 1$, which holds by Prop.~\ref{props1}~\ref{one0}.
The case $n=1$ is trivial, so we may assume $n \geq 2$. By~\ref{axmult0} we have
$(v_1 \wedge \dots \wedge v_n) \cdot \lambda \leq v_i \cdot \lambda$ for all $i \in \{1,\dots,n\}$,
which already shows
\begin{align}
(v_1 \wedge \dots \wedge v_n) \cdot \lambda \leq (v_1 \cdot \lambda) \wedge \dots \wedge (v_n \cdot \lambda) \quad.
\end{align}
For the other inequality, we abbreviate $w:=(v_1 \cdot \lambda) \wedge \dots \wedge (v_n \cdot \lambda)$,
i.e. we have to show $w \leq (v_1 \wedge \dots \wedge v_n) \cdot \lambda$. This is obvious for $w=0$, so we may assume $w \neq 0$,
which implies $v_1,\dots,v_n \neq 0$ by~\ref{axzero0}. We then obtain
$\dom(w) = \dom(v_1 \cdot \lambda) \cup \dots \cup \dom(v_n \cdot \lambda)
= \lambda^{-1}(\dom(v_1)) \cup \dots \cup \lambda^{-1}(\dom(v_n))
= \lambda^{-1}(\dom(v_1) \cup \dots \cup \dom(v_n))
\subseteq \lambda^{-1}(\rng(\lambda))
= \df(\lambda)$ by Prop.~\ref{props1}~\ref{dominf0}, \ref{axpreimg0},
and the assumption $\dom(v_1) \cup \dots \cup \dom(v_n) \subseteq \rng(\lambda)$. From $\dom(w) \subseteq \df(\lambda)$, in turn,
we obtain
\begin{align}
\label{aux0}
w = w \cdot \pi_{\df(\lambda)}
\end{align}
using~\ref{axneut0} and Prop.~\ref{props1}~\ref{astr0}. For all $i \in \{1,\dots,n\}$, we have $w \leq v_i \cdot \lambda$ by the definition of $w$,
and thus $w \cdot \lambda^{-1} \leq v_i \cdot \lambda \cdot \lambda^{-1} = v_i \cdot \pi_{\rng(\lambda)} = v_i \cdot \pi_{\dom(v_i)} = v_i$
by~\ref{axmult0}, \ref{axsemi0}, Prop.~\ref{props1}~\ref{astr0} and~\ref{axneut0}.
This shows $w \cdot \lambda^{-1} \leq v_1 \wedge \dots \wedge v_n$, and consequently we obtain
$w = w \cdot \pi_{\df(\lambda)} = w \cdot \lambda^{-1} \cdot \lambda \leq (v_1 \wedge \dots \wedge v_n) \cdot \lambda$
from~\eqref{aux0}, \ref{axsemi0} and~\ref{axmult0}.
\qed
\end{proof}

\begin{proposition}
\label{equality0}
Let $y,z,y_1,y_2,z_1,z_2 \in \mathbf{var}$ with $y_1 \neq y_2$ and $z_1 \neq z_2$.
\begin{enumerate}[label={\roman*)},leftmargin=2cm]
\item $d_{yy} \cdot \tfrac{y}{z} = d_{zz}$ \label{equality1}
\item $d_{z_1z_2} \cdot \tfrac{z_1z_2}{y_1y_2}= d_{y_1y_2}$ \label{equality2}
\end{enumerate}
\end{proposition}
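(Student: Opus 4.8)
The plan is to prove part~\ref{equality1} first and then reduce part~\ref{equality2} to it. The guiding intuition (read off from the table algebra) is that $\tfrac{y}{z}$ acts as a pure renaming of the single column $y$ to $z$, and that $d_{zz}$ is pinned down among elements whose domain contains $z$ by axiom~\ref{axdom0}. The real work is to verify equality in the semilattice order; since only meets are available, I will argue $\leq$ and $\geq$ separately, and the reverse inequality is where the renaming has to be ``undone''.

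For part~\ref{equality1}, first note $d_{yy},d_{zz}\neq 0$ by~\ref{axdxx0}, with $\dom(d_{yy})=\{y\}$ and $\dom(d_{zz})=\{z\}$ by Prop.~\ref{props1}~\ref{domdxy0}. Using~\ref{axpreimg0} I compute $\dom(d_{yy}\cdot\tfrac{y}{z})=(\tfrac{y}{z})^{-1}(\{y\})=\{z\}$, so $z\in\dom(d_{yy}\cdot\tfrac{y}{z})$ and hence $d_{yy}\cdot\tfrac{y}{z}\leq d_{zz}$ by~\ref{axdom0}. For the reverse inequality I run the same domain computation on $d_{zz}\cdot\tfrac{z}{y}$, obtaining $\dom(d_{zz}\cdot\tfrac{z}{y})=(\tfrac{z}{y})^{-1}(\{z\})=\{y\}$ and thus $d_{zz}\cdot\tfrac{z}{y}\leq d_{yy}$ by~\ref{axdom0}. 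Multiplying by $\tfrac{y}{z}$ and invoking monotonicity~\ref{axmult0} gives $d_{zz}\cdot\tfrac{z}{y}\cdot\tfrac{y}{z}\leq d_{yy}\cdot\tfrac{y}{z}$. The key composition is $\tfrac{z}{y}\circ\tfrac{y}{z}=\pi_{\{z\}}$, so by~\ref{axsemi0} and~\ref{axneut0} the left-hand side collapses to $d_{zz}\cdot\pi_{\{z\}}=d_{zz}$, yielding $d_{zz}\leq d_{yy}\cdot\tfrac{y}{z}$. Antisymmetry of the order then gives equality (the degenerate case $y=z$ is immediate from~\ref{axneut0}).

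For part~\ref{equality2}, I expand both diagonals through~\ref{axdxy0}, writing $d_{z_1z_2}=d_{z_1z_1}\cdot\tfrac{z_1z_1}{z_1z_2}$ and $d_{y_1y_2}=d_{y_1y_1}\cdot\tfrac{y_1y_1}{y_1y_2}$. Applying~\ref{axsemi0} to the left-hand side of~\ref{equality2} merges the two stacked transformations into a single composite; the bookkeeping is that $\tfrac{z_1z_1}{z_1z_2}\circ\tfrac{z_1z_2}{y_1y_2}=\tfrac{z_1z_1}{y_1y_2}$, whence $d_{z_1z_2}\cdot\tfrac{z_1z_2}{y_1y_2}=d_{z_1z_1}\cdot\tfrac{z_1z_1}{y_1y_2}$. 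On the other side, part~\ref{equality1} (with $z_1,y_1$ in place of $y,z$) gives $d_{y_1y_1}=d_{z_1z_1}\cdot\tfrac{z_1}{y_1}$, and a second use of~\ref{axsemi0} together with $\tfrac{z_1}{y_1}\circ\tfrac{y_1y_1}{y_1y_2}=\tfrac{z_1z_1}{y_1y_2}$ shows $d_{y_1y_2}=d_{z_1z_1}\cdot\tfrac{z_1z_1}{y_1y_2}$ as well. Comparing the two expressions finishes the proof; here the hypotheses $y_1\neq y_2$ and $z_1\neq z_2$ are exactly what makes the fraction notation (and the bijectivity of $\tfrac{z_1z_2}{y_1y_2}$) well defined.

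I expect the main obstacle to be the reverse inequality in part~\ref{equality1}: unlike the forward direction, which drops out of a single domain computation, it needs the ``round trip'' $\tfrac{z}{y}$ followed by $\tfrac{y}{z}$ and the observation that this composite is $\pi_{\{z\}}=\pi_{\dom(d_{zz})}$, so that~\ref{axneut0} recovers $d_{zz}$ \emph{exactly} rather than merely up to $\leq$. Once part~\ref{equality1} is in hand, part~\ref{equality2} is essentially just careful composition of finite partial transformations followed by~\ref{axsemi0}.
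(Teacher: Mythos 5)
Your proof is correct and takes essentially the same route as the paper: part~\ref{equality1} via the domain computation with~\ref{axpreimg0} and~\ref{axdom0} for one inequality and the round trip $\tfrac{z}{y}\circ\tfrac{y}{z}=\pi_{\{z\}}$ combined with~\ref{axmult0}, \ref{axsemi0} and~\ref{axneut0} for the other, and part~\ref{equality2} by expanding through~\ref{axdxy0}, part~\ref{equality1} and~\ref{axsemi0}. The only cosmetic difference is that you reduce both sides of~\ref{equality2} to the common expression $d_{z_1z_1}\cdot\tfrac{z_1z_1}{y_1y_2}$, whereas the paper runs a single one-directional chain of equalities; the underlying computation is identical.
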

\begin{proof}
\ref{equality1}: Since $\dom(d_{yy} \cdot \tfrac{y}{z}) = (\tfrac{y}{z})^{-1}(\dom(d_{yy})) = \{z\}$
by~\ref{axpreimg0} and Prop.~\ref{props1}\,\ref{domdxy0},
we obtain $d_{yy} \cdot \tfrac{y}{z} \leq d_{zz}$ from~\ref{axdom0}. Symmetrically, we obtain $d_{zz} \cdot \tfrac{z}{y} \leq d_{yy}$,
and thus $d_{zz} = d_{zz} \cdot \pi_{\{z\}} = d_{zz} \cdot \tfrac{z}{y} \cdot \tfrac{y}{z} \leq d_{yy} \cdot \tfrac{y}{z}$ using~\ref{axneut0},
\ref{axsemi0} and~\ref{axmult0}. Taken together, this means $d_{yy} \cdot \tfrac{y}{z} = d_{zz}$.
\ref{equality2}: Using~\ref{axdxy0}, \ref{equality1} and~\ref{axsemi0}, we obtain
$d_{z_1z_2} \cdot \tfrac{z_1z_2}{y_1y_2}
= d_{z_1z_1} \cdot \tfrac{z_1z_1}{z_1z_2} \cdot \tfrac{z_1z_2}{y_1y_2}
= d_{y_1y_1} \cdot \tfrac{y_1}{z_1} \cdot \tfrac{z_1z_1}{z_1z_2} \cdot \tfrac{z_1z_2}{y_1y_2}
= d_{y_1y_1} \cdot \tfrac{y_1y_1}{y_1y_2}
= d_{y_1y_2}$.
\qed
\end{proof}

\begin{proposition}
\label{symmetry0}
Let $x,y \in \mathbf{var}$. Then $d_{xy} = d_{yx}$.
\end{proposition}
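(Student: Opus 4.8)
The plan is to reduce both diagonals to the single-index diagonal $d_{xx}$ and to exploit the fact that the two ``collapsing'' transformations $\tfrac{xx}{xy}$ and $\tfrac{xx}{yx}$ are literally the same partial transformation on $\{x,y\}$, namely the constant map onto $x$. First I would dispose of the trivial case $x=y$, where $d_{xy}=d_{xx}=d_{yx}$ holds immediately; this case split is in fact necessary, since for $x=y$ the notation $\tfrac{yy}{yx}$ is not even defined (its denominator would repeat a variable). Hence I may assume $x \neq y$.

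The core is a single chain of equalities starting from $d_{yx}$. By~\ref{axdxy0} we have $d_{yx}=d_{yy} \cdot \tfrac{yy}{yx}$, and by Prop.~\ref{equality0}~\ref{equality1} (applied with its ``$y$'' set to $x$ and its ``$z$'' set to $y$) we have $d_{yy}=d_{xx} \cdot \tfrac{x}{y}$. Substituting and invoking the semigroup action~\ref{axsemi0} then gives $d_{yx}=(d_{xx} \cdot \tfrac{x}{y}) \cdot \tfrac{yy}{yx}=d_{xx} \cdot (\tfrac{x}{y} \circ \tfrac{yy}{yx})$. It only remains to identify the composite transformation.

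This is the step I expect to require the most care, because the paper's convention is $(\lambda \circ \mu)(a)=\lambda(\mu(a))$, so the order of application must be tracked precisely. The map $\tfrac{yy}{yx}$ sends both $x$ and $y$ to $y$, and $\tfrac{x}{y}$ then sends $y$ to $x$; hence the composite sends both $x$ and $y$ to $x$ and has domain $\{x,y\}$. That is exactly $\tfrac{xx}{yx}$, which equals $\tfrac{xx}{xy}$ since both denote the constant map onto $x$ on $\{x,y\}$. Therefore $d_{yx}=d_{xx} \cdot \tfrac{xx}{xy}$, and a final application of~\ref{axdxy0} gives $d_{xx} \cdot \tfrac{xx}{xy}=d_{xy}$, closing the chain $d_{yx}=d_{xy}$.

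The only genuine subtlety is the composition bookkeeping together with the recognition that $\tfrac{xx}{xy}$ and $\tfrac{xx}{yx}$ coincide as partial transformations; everything else is a direct appeal to~\ref{axsemi0}, \ref{axdxy0} and Prop.~\ref{equality0}~\ref{equality1}. In particular, I do not expect to need an antisymmetry argument or the equality axiom~\ref{axequal0}: the symmetry of $d_{xy}$ falls out purely from the symmetry of the underlying collapsing map.
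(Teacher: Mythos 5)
Your proof is correct and is essentially the paper's proof run in mirror image: the paper computes $d_{xy} = d_{xx} \cdot \tfrac{xx}{xy} = d_{yy} \cdot \tfrac{y}{x} \cdot \tfrac{xx}{xy} = d_{yy} \cdot \tfrac{yy}{yx} = d_{yx}$ using exactly your three ingredients (\ref{axdxy0}, Prop.~\ref{equality0}~\ref{equality1} and \ref{axsemi0}), with the same identification of the composite collapsing map. Your explicit disposal of the case $x=y$ is a harmless extra nicety that the paper omits.
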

\begin{proof}
We obtain $d_{xy} = d_{xx} \cdot \frac{xx}{xy} = d_{yy} \cdot \frac{y}{x} \cdot \frac{xx}{xy} = d_{yy} \cdot \frac{yy}{yx} = d_{yx}$
from~\ref{axdxy0}, Prop.~\ref{equality0}~\ref{equality1} and~\ref{axsemi0}.
\qed
\end{proof}

\begin{definition}
\label{ediag0}
Let $\delta \in \mathcal{T}_{\mathrm{fp}}(\mathbf{var})$ be a folding. The \emph{$\delta$-diagonal} is
\begin{align}
e_\delta := \bigwedge_{x \in \df(\delta)} d_{x\delta(x)} \quad.
\end{align}
\end{definition}

\begin{proposition}
\label{duplication0}
Let $v \in V$, and let $\delta \in \mathcal{T}_{\mathrm{fp}}(\mathbf{var})$ be a folding with $\rng(\delta) \subseteq \dom(v)$.
Then $v \cdot \delta \leq e_{\delta}$.
\end{proposition}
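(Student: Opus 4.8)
The plan is to use that $e_\delta$ is a \emph{finite} infimum and prove the single-coordinate bound $v \cdot \delta \le d_{x\delta(x)}$ separately for every $x \in \df(\delta)$; the claim then follows because $\wedge$ is the infimum. The case $v = 0$ is immediate from~\ref{axzero0}, since then $v \cdot \delta = 0 \le e_\delta$, so I would assume $v \ne 0$. Fix $x \in \df(\delta)$ and put $y := \delta(x)$. Because $\delta$ is a folding, $\delta \circ \delta = \delta$ together with $(x,y) \in \delta$ and functionality forces $(y,y) \in \delta$, i.e. $y \in \df(\delta)$ and $\delta(y) = y$. Hence $\{x,y\} \subseteq \df(\delta)$, and the restriction $\delta \circ \pi_{\{x,y\}} = \delta|_{\{x,y\}}$ is exactly the transformation $\rho := \tfrac{yy}{xy}$ (sending $x \mapsto y$ and $y \mapsto y$).

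Next I would cut the target down to the domain $\{x,y\}$. By Prop.~\ref{props1}\,\ref{domdxy0} we have $\dom(d_{xy}) = \{x,y\}$, so Prop.~\ref{props1}\,\ref{proj2} reduces $v \cdot \delta \le d_{xy}$ to proving $(v \cdot \delta) \cdot \pi_{\{x,y\}} \le d_{xy}$. Using~\ref{axsemi0} and the previous paragraph, $(v \cdot \delta) \cdot \pi_{\{x,y\}} = v \cdot (\delta \circ \pi_{\{x,y\}}) = v \cdot \rho$, so everything comes down to the single inequality $v \cdot \rho \le d_{xy}$.

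For this last step I would use the hypothesis: $y = \delta(x) \in \rng(\delta) \subseteq \dom(v)$, whence $v \le d_{yy}$ by~\ref{axdom0}, and then multiply by $\rho$ to get $v \cdot \rho \le d_{yy} \cdot \rho$ via~\ref{axmult0}. It remains to identify $d_{yy} \cdot \rho$ with $d_{xy}$. If $x \ne y$, then $\rho = \tfrac{yy}{yx}$, so $d_{yy} \cdot \rho = d_{yx} = d_{xy}$ by~\ref{axdxy0} and Prop.~\ref{symmetry0}; if $x = y$, then $\rho = \pi_{\{y\}}$ and $d_{yy} \cdot \pi_{\{y\}} = d_{yy}$ by~\ref{axneut0}. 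Either way $v \cdot \rho \le d_{xy} = d_{x\delta(x)}$, and taking the infimum over all $x \in \df(\delta)$ gives $v \cdot \delta \le e_\delta$.

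The semigroup-action bookkeeping (rewriting compositions via~\ref{axsemi0}) is routine. The one load-bearing observation is the folding identity $\delta(y) = y$: it is what collapses $\delta|_{\{x,y\}}$ to the simple transformation $\rho = \tfrac{yy}{xy}$ and makes $d_{yy} \cdot \rho = d_{xy}$ come out; conceptually, it expresses that multiplying by a folding forces the coordinates $x$ and $\delta(x)$ to coincide, which is exactly membership in the diagonal $d_{x\delta(x)}$. I expect no real obstacle beyond keeping careful track that $y$ lies in both $\df(\delta)$ and $\dom(v)$.
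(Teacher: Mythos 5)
Your proof is correct and takes essentially the same route as the paper's: both establish the coordinatewise bound $v \cdot \delta \leq d_{x\delta(x)}$ by cutting down to $\{x,\delta(x)\}$ (you via Prop.~\ref{props1}\,\ref{proj2}, the paper directly via \ref{axproj0}), using the folding identity $\delta(\delta(x))=\delta(x)$ to identify $\delta \circ \pi_{\{x,\delta(x)\}}$ with $\tfrac{\delta(x)\delta(x)}{\delta(x)\;x}$, and then concluding with \ref{axdom0}, \ref{axmult0}, \ref{axsemi0}, \ref{axdxy0} and Prop.~\ref{symmetry0} before taking the finite infimum over $\df(\delta)$. Your explicit case splits ($v=0$ and $x=\delta(x)$) are harmless additions that the paper leaves implicit.
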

\begin{proof}
Let $x \in \df(\delta)$. Since $\rng(\delta) \subseteq \dom(v)$, we have $\delta(x) \in \dom(v)$,
and thus $v \leq d_{\delta(x)\delta(x)}$ by~\ref{axdom0}.
Consequently, we obtain $v \cdot \delta \leq v \cdot \delta \cdot \pi_{\{\delta(x),x\}}
= v \cdot \tfrac{\delta(x)\delta(x)}{\delta(x)\;\;x} \leq d_{\delta(x)\delta(x)} \cdot \tfrac{\delta(x)\delta(x)}{\delta(x)\;\;x}
= d_{\delta(x)x} = d_{x\delta(x)}$
using~\ref{axproj0}, \ref{axsemi0}, \ref{axmult0}, \ref{axdxy0} and Prop.~\ref{symmetry0},
which shows $v \cdot \delta \leq e_\delta$ by Def.~\ref{ediag0}.
\qed
\end{proof}

\begin{proposition}
\label{duplication2}
Let $v \in V$, and let $\delta \in \mathcal{T}_{\mathrm{fp}}(\mathbf{var})$ be a folding with $\df(\delta) = \dom(v)$.
If $v \leq e_\delta$, then $v = (v \cdot \pi_{\rng(\delta)}) \wedge e_\delta$.
\end{proposition}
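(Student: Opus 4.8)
The plan is to dispose of the trivial case and the easy inequality first, and then obtain the reverse inequality — in fact the full equality — by induction on the number of non-fixed points of $\delta$. If $v=0$, then $v\cdot\pi_{\rng(\delta)}=0$ by~\ref{axzero0} and $0\wedge e_\delta=0=v$, so the claim holds; hence I assume $v\neq 0$. Since $\delta$ is a folding, every element of $\rng(\delta)$ is a fixed point of $\delta$, so $\rng(\delta)\subseteq\df(\delta)=\dom(v)$ and $\delta$ is the identity on $\rng(\delta)$. The inequality $v\leq(v\cdot\pi_{\rng(\delta)})\wedge e_\delta$ is immediate, as $v\leq v\cdot\pi_{\rng(\delta)}$ by~\ref{axproj0} and $v\leq e_\delta$ by hypothesis. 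It remains to show $(v\cdot\pi_{\rng(\delta)})\wedge e_\delta\leq v$, which I prove by induction on $k:=\abs{\df(\delta)\setminus\rng(\delta)}$.

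For the base case $k=0$ the folding $\delta=\pi_{\dom(v)}$ is a partial identity, so $v\cdot\pi_{\rng(\delta)}=v\cdot\pi_{\dom(v)}=v$ by~\ref{axneut0}, and the hypothesis $v\leq e_\delta$ gives $(v\cdot\pi_{\rng(\delta)})\wedge e_\delta=v\wedge e_\delta=v$.

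For the step $k\geq 1$ I pick $x_0\in\df(\delta)\setminus\rng(\delta)$, set $y_0:=\delta(x_0)\in\rng(\delta)$ (so $x_0\neq y_0$), and abbreviate $R:=\rng(\delta)$, $\delta':=\delta|_{\dom(v)\setminus\{x_0\}}$ and $v':=v\cdot\pi_{\dom(v)\setminus\{x_0\}}$. One checks that $\delta'$ is again a folding with $\rng(\delta')=R$ (the fixed points $R$ survive the restriction because $x_0\notin R$), and that $\dom(v')=\dom(v)\setminus\{x_0\}=\df(\delta')$ by~\ref{axpreimg0} and $v\neq 0$, so that $\abs{\df(\delta')\setminus\rng(\delta')}=k-1$. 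The one genuinely nontrivial verification is $v'\leq e_{\delta'}$: for each $x\in\dom(v)\setminus\{x_0\}$ the diagonal $d_{x\delta(x)}$ involves only columns in $\dom(v)\setminus\{x_0\}$ (since $\delta(x)\in R$ and $x_0\notin R$), so dropping $x_0$ leaves the constraint untouched, namely $v'\cdot\pi_{\{x,\delta(x)\}}=v\cdot\pi_{\{x,\delta(x)\}}\leq d_{x\delta(x)}$, whence $v'\leq d_{x\delta(x)}$ by Prop.~\ref{props1}~\ref{proj2} and therefore $v'\leq e_{\delta'}$ by Def.~\ref{ediag0}. Applying the induction hypothesis to $v'$ and $\delta'$ then yields $v'=(v'\cdot\pi_R)\wedge e_{\delta'}=(v\cdot\pi_R)\wedge e_{\delta'}$, the last equality because $R\subseteq\dom(v)\setminus\{x_0\}$ gives $v'\cdot\pi_R=v\cdot\pi_R$ via~\ref{axsemi0}.

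To reintroduce the column $x_0$ I invoke the single-diagonal axiom~\ref{axequal0}, and here lies the only subtlety: \ref{axequal0} eliminates the \emph{second} subscript of the diagonal, so in order to drop $x_0$ (rather than the fixed point $y_0$ that I want to keep) I first rewrite $d_{x_0y_0}=d_{y_0x_0}$ using Prop.~\ref{symmetry0}. Since $v\leq e_\delta\leq d_{x_0y_0}=d_{y_0x_0}$, $v\neq 0$ and $y_0\neq x_0$, axiom~\ref{axequal0} gives $v=(v\cdot\pi_{\dom(v)\setminus\{x_0\}})\wedge d_{y_0x_0}=v'\wedge d_{x_0y_0}$. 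Substituting the expression for $v'$ and using $e_\delta=e_{\delta'}\wedge d_{x_0y_0}$ (immediate from Def.~\ref{ediag0}), I obtain $v=(v\cdot\pi_R)\wedge e_{\delta'}\wedge d_{x_0y_0}=(v\cdot\pi_{\rng(\delta)})\wedge e_\delta$, closing the induction. I expect the main obstacle to be exactly the step $v'\leq e_{\delta'}$, that is, checking that projecting away the redundant column $x_0$ preserves every remaining diagonal equality, together with the bookkeeping that the restricted folding keeps the same range, so that the projection target $\rng(\delta')=\rng(\delta)$ is stable throughout the induction.
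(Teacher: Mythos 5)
Your proof is correct and takes essentially the same route as the paper's: dispatch $v=0$, then induct on $\#(\df(\delta)\setminus\rng(\delta))$, using \ref{axequal0} to strip one variable $x_0\in\df(\delta)\setminus\rng(\delta)$, verifying that the restricted folding keeps the same range and that $v\cdot\pi_{\dom(v)\setminus\{x_0\}}\leq e_{\delta'}$, and recombining via the induction hypothesis and $e_\delta=e_{\delta'}\wedge d_{x_0\delta(x_0)}$. Your explicit appeal to Prop.~\ref{symmetry0} to orient the diagonal before applying \ref{axequal0} (which eliminates the second subscript) is in fact a detail the paper's proof uses silently when it removes $y$ from $d_{y\delta(y)}$, so your write-up is, if anything, slightly more careful on that point.
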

\begin{proof}
For $v = 0$, we use~\ref{axzero0} and obtain $0 = 0 \wedge e_\delta = (0 \cdot \pi_{\rng(\delta)}) \wedge e_\delta$.
For $v \neq 0$, we carry out an induction over $\#(\df(\delta)\setminus\rng(\delta))$.

\emph{Base Case: $\#(\df(\delta)\setminus\rng(\delta))=0$:} In this case $\df(\delta) = \rng(\delta)$,
so $\delta = \pi_{\rng(\delta)}$ since $\delta$ is a folding. By assumption $\df(\delta) = \dom(v)$,
so $\delta = \pi_{\dom(v)}$, and thus $v \leq e_{\pi_{\dom(v)}}$ by~\ref{axdom0}. So altogether
\begin{align}
v = v \wedge e_\delta
\underset{\ref{axneut0}}{=} (v \cdot \pi_{\dom(v)}) \wedge e_\delta
\underset{\text{Assmpt.}}{=} (v \cdot \pi_{\rng(\delta)}) \wedge e_\delta \quad.
\end{align}

\emph{Induction Step: $\#(\df(\delta)\setminus\rng(\delta))=n+1$:}
Let $y \in \df(\delta) \setminus \rng(\delta)$ and set $X:=\dom(v)\setminus\{y\}$.
Since $\delta$ is a folding, we have $y \neq \delta(y)$; and by assumption $v \leq e_\delta \leq d_{y\delta(y)}$;
so by~\ref{axequal0} we have
\begin{align}
\label{step0}
v = (v \cdot \pi_X) \wedge d_{y\delta(y)} \quad.
\end{align}
The restriction $\delta|_X:\df(\delta)\setminus\{y\} \twoheadrightarrow \rng(\delta)$ is a folding with
$\#(\df(\delta|_X) \setminus \rng(\delta|_X)) = n$. For all $x \in X$, we have $v \leq e_\delta \leq d_{x\delta(x)}$ by assumption,
and thus $v \cdot \pi_X \leq d_{x\delta(x)} \cdot \pi_X = d_{x\delta(x)} \cdot \pi_{\{x,\delta(x)\}} = d_{x\delta(x)}$
by~\ref{axmult0}, Prop.~\ref{props1}~\ref{domdxy0}\&\ref{astr0} and~\ref{axneut0},
which shows $v \cdot \pi_X \leq e_{\delta|_X}$ by Def.~\ref{ediag0}. In addition, using~\ref{axpreimg0} (recall $v \neq 0$)
we obtain $\dom(v \cdot \pi_X) = \pi_X^{-1}(\dom(v)) = X = \df(\delta|_X)$, so by the induction hypothesis
$v \cdot \pi_X = ((v \cdot \pi_X) \cdot \pi_{\rng(\delta|_X)}) \wedge e_{\delta|_X}$, and thus
\begin{align*}
v \underset{\substack{\eqref{step0}\\\text{I.H.}}}{=} (v \cdot \pi_X \cdot \pi_{\rng(\delta|_X)}) \wedge e_{\delta|_X} \wedge d_{y\delta(y)}
\underset{\substack{\ref{axsemi0}\\\text{Def.}\,\ref{ediag0}}}{=} (v \cdot \pi_{\rng(\delta)}) \wedge e_\delta \quad,
\end{align*}
which was to be shown.
\qed
\end{proof}

\begin{proposition}
\label{duplication1}
Let $v \in V$, and let $\delta \in \mathcal{T}_{\mathrm{fp}}(\mathbf{var})$ be a folding with $\df(\delta) = \dom(v)$.
If $v \leq e_\delta$, then $v = v \cdot \delta$.
\end{proposition}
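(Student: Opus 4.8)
The plan is to apply Proposition~\ref{duplication2} twice---once to $v$ and once to $v \cdot \delta$---and to observe that both applications produce the \emph{same} right-hand side. First I would dispose of the degenerate case $v = 0$: here $v \cdot \delta = 0 \cdot \delta = 0 = v$ by~\ref{axzero0}, so nothing remains to prove. Hence I may assume $v \neq 0$ throughout. Applying Proposition~\ref{duplication2} directly to $v$ (its hypotheses $\df(\delta) = \dom(v)$ and $v \leq e_\delta$ are exactly what we are given) yields $v = (v \cdot \pi_{\rng(\delta)}) \wedge e_\delta$.

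The heart of the argument is to show that $v \cdot \delta$ also satisfies the hypotheses of Proposition~\ref{duplication2} with the \emph{same} folding $\delta$. There are two things to check. For the domain condition, since $v \neq 0$, axiom~\ref{axpreimg0} gives $\dom(v \cdot \delta) = \delta^{-1}(\dom(v)) = \delta^{-1}(\df(\delta))$; because $\delta$ is a folding we have $\rng(\delta) \subseteq \df(\delta)$, whence $\delta^{-1}(\df(\delta)) = \df(\delta)$, so $\dom(v \cdot \delta) = \df(\delta)$. For the diagonal condition, Proposition~\ref{duplication0} applies---its premise $\rng(\delta) \subseteq \dom(v)$ follows from $\rng(\delta) \subseteq \df(\delta) = \dom(v)$---and gives $v \cdot \delta \leq e_\delta$.

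With these hypotheses in hand, Proposition~\ref{duplication2} applied to $v \cdot \delta$ yields $v \cdot \delta = ((v \cdot \delta) \cdot \pi_{\rng(\delta)}) \wedge e_\delta$. The decisive simplification is the identity $(v \cdot \delta) \cdot \pi_{\rng(\delta)} = v \cdot \pi_{\rng(\delta)}$: using~\ref{axsemi0} the left side equals $v \cdot (\delta \circ \pi_{\rng(\delta)})$, and by the folding characterisation of Proposition~\ref{folding0} we have $\delta \circ \pi_{\rng(\delta)} = \pi_{\rng(\delta)}$. Substituting gives $v \cdot \delta = (v \cdot \pi_{\rng(\delta)}) \wedge e_\delta$, which is precisely the right-hand side obtained for $v$ above; therefore $v = v \cdot \delta$.

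I expect the main obstacle to be not any single computation but the realisation that $v \cdot \delta$ inherits the very same normal form $(v \cdot \pi_{\rng(\delta)}) \wedge e_\delta$ as $v$; once this is seen, the proof reduces to checking the hypotheses of Proposition~\ref{duplication2} for $v \cdot \delta$ and the one-line folding simplification via Proposition~\ref{folding0}. A minor point worth care is the preimage computation $\delta^{-1}(\df(\delta)) = \df(\delta)$, which relies on $\rng(\delta) \subseteq \df(\delta)$ holding for foldings.
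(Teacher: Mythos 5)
Your proposal is correct and follows essentially the same route as the paper's own proof: handle $v=0$ via \ref{axzero0}, verify that $v \cdot \delta$ satisfies the hypotheses of Prop.~\ref{duplication2} (domain via \ref{axpreimg0} and $\rng(\delta) \subseteq \df(\delta)$, diagonal bound via Prop.~\ref{duplication0}), apply Prop.~\ref{duplication2} to both $v$ and $v \cdot \delta$, and identify the two right-hand sides using \ref{axsemi0} together with the folding characterisation $\delta \circ \pi_{\rng(\delta)} = \pi_{\rng(\delta)}$ of Prop.~\ref{folding0}. The only difference is cosmetic: you spell out the preimage computation $\delta^{-1}(\df(\delta)) = \df(\delta)$ slightly more explicitly than the paper does.
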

\begin{proof}
If $v = 0$, then $0 = 0 \cdot \delta$ by~\ref{axzero0}; so we can now assume $v \neq 0$.
Since $\delta$ is a folding with $\df(\delta) = \dom(v)$, we have $\rng(\delta) \subseteq \dom(v)$,
and thus $v \cdot \delta \leq e_\delta$ by Prop.~\ref{duplication0}.
Also, we obtain $\df(\delta) = \delta^{-1}(\dom(v)) = \dom(v \cdot \delta)$ from $\rng(\delta) \subseteq \dom(v)$ and~\ref{axpreimg0},
so we can apply Prop.~\ref{duplication2} to $v \cdot \delta$ and $v$ alike:
\begin{align}
v \cdot \delta
\underset{\text{Prop.}\,\ref{duplication2}}{=} (v \cdot \delta \cdot \pi_{\rng(\delta)}) \wedge e_\delta
\underset{\substack{\text{Prop.}\,\ref{folding0}\\\ref{axsemi0}}}{=} (v \cdot \pi_{\rng(\delta)}) \wedge e_\delta
\underset{\substack{\text{Prop.}\,\ref{duplication2}\\\text{Assmpt.}}}{=} v \quad.
\end{align}
\end{proof}

\section{Labelings}
\label{labelings0}
\subsection{Tuple Labelings}
\begin{definition}[Tuple Labeling]
A \emph{tuple labeling} of an orbital semilattice $V$ over a nonempty set $G$ is a function
$\alpha:\mathrm{NTup}(G) \rightarrow V$ which satisfies
\begin{enumerate}[label={\bf(L\arabic*)},leftmargin=2cm]
\item \label{lax1} $\dom(\alpha(t)) = \df(t)$
\item \label{lax2} $\alpha(t \circ \lambda) = \alpha(t) \cdot \lambda$
\item \label{lax3} $(\df(t) \subseteq \dom(v) \text{ and } \alpha(t) \leq v \cdot \pi_{\df(t)}) \\
    \Rightarrow \exists \tilde{t} \in G^{\dom(v)}: (\tilde{t}|_{\df(t)} = t \text{ and } \alpha(\tilde{t}) \leq v)$
\item \label{lax4} $\alpha(t) \leq d_{z_1z_2} \,\Rightarrow\, t(z_1)=t(z_2)$
\end{enumerate}
for all $t \in \mathrm{NTup}(G)$, $\lambda \in \mathcal{T}_{\mathrm{fp}}(\mathbf{var})$, $v \in V$ and $z_1,z_2 \in \mathbf{var}$.
\end{definition}

\begin{definition}
\label{extalpha0}
Let $\alpha:\mathrm{NTup}(G) \rightarrow V$ be a tuple labeling. The \emph{$\alpha$-extent} of each $u \in V$
is given by the function $\ext_\alpha:V \rightarrow \mathrm{Tab}(G)$ with
\begin{align}
\ext_\alpha(u) := \{ t \in \mathrm{NTup}(G) \mid \alpha(t) \leq u \text{ and } \dom(\alpha(t))=\dom(u) \} \quad.
\end{align}
\end{definition}

\begin{theorem}
\label{embedding0}
Let $\alpha:\mathrm{NTup}(G) \rightarrow V$ be a tuple labeling with $\rng(\alpha) = V \setminus \{0\}$.
Then $\ext_\alpha:V \rightarrow \mathrm{Tab}(G)$ is an injective homomorphism of orbital semilattices.
\end{theorem}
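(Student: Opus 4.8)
The plan is to verify that $\ext_\alpha$ preserves each component of the orbital-semilattice structure and is injective, exploiting the surjectivity assumption $\rng(\alpha)=V\setminus\{0\}$ at several key points. I would first record the basic behaviour of the extent. By~\ref{lax1} the condition $\dom(\alpha(t))=\dom(u)$ in Def.~\ref{extalpha0} reads $\df(t)=\dom(u)$, so $\ext_\alpha(u)\subseteq G^{\dom(u)}$. Since $\dom(0)=\mathbf{var}$ (Prop.~\ref{props1}~\ref{dom0}) is infinite while named tuples have finite domain, $\ext_\alpha(0)=\emptyset$; and because every nonzero $u$ equals $\alpha(t)$ for some $t$, that $t$ satisfies $\df(t)=\dom(u)$ and $\alpha(t)\le u$, so $\ext_\alpha(u)\ne\emptyset$ for $u\ne 0$. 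Hence $\ext_\alpha(u)=\emptyset\Leftrightarrow u=0$, which gives preservation of $0$, and $\schema(\ext_\alpha(u))=\dom(u)$ in both cases (using $\schema(\emptyset)=\mathbf{var}$). Preservation of $1$ follows from $\dom(1)=\emptyset$ (Prop.~\ref{props1}~\ref{empty0}), which forces $\ext_\alpha(1)=\{\langle\rangle\}$.

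Next I would treat the diagonals and the infimum. For $\ext_\alpha(d_{xy})=E_{xy}$, the inclusion $\subseteq$ is immediate from~\ref{lax4} together with $\dom(d_{xy})=\{x,y\}$ (Prop.~\ref{props1}~\ref{domdxy0}); for $\supseteq$, given $t\in E_{xy}$ I would write $t=s\circ\tfrac{xx}{xy}$ with $s(x)=t(x)$, so that $\alpha(t)=\alpha(s)\cdot\tfrac{xx}{xy}\le d_{xx}\cdot\tfrac{xx}{xy}=d_{xy}$ by~\ref{lax2}, \ref{axdom0}, \ref{axmult0} and~\ref{axdxy0} (the case $x=y$ being immediate from~\ref{axdom0}). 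For $\ext_\alpha(u\wedge v)=\ext_\alpha(u)\Join\ext_\alpha(v)$, schema-preservation lets me take the join over $\dom(u)\cup\dom(v)$; the translation between the two sides rests on the equivalence $\alpha(t)\le u\Leftrightarrow \alpha(t)\cdot\pi_{\dom(u)}\le u$ (Prop.~\ref{props1}~\ref{proj2}) combined with $\alpha(t|_{\dom(u)})=\alpha(t)\cdot\pi_{\dom(u)}$ (\ref{lax2}). When $u\wedge v\ne 0$ I would use $\dom(u\wedge v)=\dom(u)\cup\dom(v)$ (Prop.~\ref{props1}~\ref{dominf0}); when $u\wedge v=0$, any tuple in the join would force $\alpha(t)\le u\wedge v=0$, contradicting $\alpha(t)\ne 0$, so both sides are empty.

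The main work, and the main obstacle, is preservation of right multiplication, $\ext_\alpha(u\cdot\lambda)=\ext_\alpha(u)\cdot\lambda$ (the case $u=0$ being trivial by~\ref{axzero0}, so assume $u\ne 0$, whence $u\cdot\lambda\ne 0$ and $\dom(u\cdot\lambda)=\lambda^{-1}(\dom(u))$ by~\ref{axpreimg0}). The inclusion $\supseteq$ is routine: for $t\in\ext_\alpha(u)$ the tuple $t\circ\lambda$ lies in $\ext_\alpha(u\cdot\lambda)$ by~\ref{lax2}, \ref{axmult0} and~\ref{axpreimg0}. The hard inclusion $\subseteq$ requires, given $s\in\ext_\alpha(u\cdot\lambda)$, the construction of a preimage $t\in\ext_\alpha(u)$ with $t\circ\lambda=s$. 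The crucial preliminary is a \emph{consistency lemma}: if $\lambda(z_1)=\lambda(z_2)=w\in\dom(u)$ with $z_1\ne z_2$, then $u\cdot\lambda\le d_{z_1z_2}$; I would derive it from $u\le d_{ww}$ (\ref{axdom0}) via $u\cdot\lambda\le d_{ww}\cdot\lambda\le (d_{ww}\cdot\lambda)\cdot\pi_{\{z_1,z_2\}}=d_{ww}\cdot\tfrac{ww}{z_1z_2}=d_{z_1z_2}$, using~\ref{axmult0}, \ref{axproj0}, \ref{axsemi0} and the identity $d_{ww}\cdot\tfrac{ww}{z_1z_2}=d_{z_1z_2}$ (itself a short consequence of Prop.~\ref{equality0}~\ref{equality1} and~\ref{axdxy0}). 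With this in hand I would set $t_0:=s\circ\lambda^{-r}$; the consistency lemma together with~\ref{lax4} shows $t_0\circ\lambda=s$, while $\alpha(t_0)=\alpha(s)\cdot\lambda^{-r}\le(u\cdot\lambda)\cdot\lambda^{-r}=u\cdot\pi_{\rng(\lambda)}=u\cdot\pi_{\df(t_0)}$ (by~\ref{lax2}, \ref{axmult0}, \ref{axsemi0}, the right-inverse identity and Prop.~\ref{props1}~\ref{astr0}) shows the hypotheses of~\ref{lax3} are met. Applying~\ref{lax3} yields $\tilde t\in G^{\dom(u)}$ extending $t_0$ with $\alpha(\tilde t)\le u$; then $\tilde t\in\ext_\alpha(u)$ and, since $\tilde t$ agrees with $t_0$ on $\rng(\lambda)\cap\dom(u)$, $\tilde t\circ\lambda=s$, completing the inclusion.

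Finally, for injectivity, suppose $\ext_\alpha(u)=\ext_\alpha(v)$. If this common set is empty, then $u=v=0$ by the dichotomy above. Otherwise $u,v\ne 0$ and $\dom(u)=\schema(\ext_\alpha(u))=\schema(\ext_\alpha(v))=\dom(v)$; choosing, by surjectivity, a tuple $t$ with $\alpha(t)=u$ gives $t\in\ext_\alpha(u)=\ext_\alpha(v)$, hence $u=\alpha(t)\le v$, and symmetrically $v\le u$, so $u=v$. I expect the reverse inclusion for right multiplication to be the genuinely delicate step; everything else is bookkeeping once schema-preservation and the $\emptyset\Leftrightarrow 0$ dichotomy are in place, both of which hinge on the surjectivity of $\alpha$.
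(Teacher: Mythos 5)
Your proof is correct, but it takes a genuinely different route from the paper at the crucial step, the inclusion $\ext_\alpha(u \cdot \lambda) \subseteq \ext_\alpha(u) \cdot \lambda$. The paper invokes the Decomposition Lemma (Lemma~\ref{decomp0}) to write $\lambda = \pi_X \circ \xi \circ \delta$ and treats the three factor types separately: the partial identity $\pi_X$ via~\ref{lax3}, the bijection $\xi$ via its inverse, and the folding $\delta$ via the $e_\delta$-machinery (Prop.~\ref{duplication0}, Def.~\ref{ediag0}) together with~\ref{lax4}. You instead pull back $s \in \ext_\alpha(u\cdot\lambda)$ directly along the right inverse, $t_0 := s \circ \lambda^{-r}$, justify $t_0 \circ \lambda = s$ by your consistency lemma ($\lambda(z_1)=\lambda(z_2)=w\in\dom(u)$, $z_1 \neq z_2$ implies $u\cdot\lambda \leq d_{z_1z_2}$, whence $s(z_1)=s(z_2)$ by~\ref{lax4}) --- which checks out, since $\tfrac{ww}{z_1z_2} = \tfrac{w}{z_1} \circ \tfrac{z_1z_1}{z_1z_2}$ reduces $d_{ww}\cdot\tfrac{ww}{z_1z_2}=d_{z_1z_2}$ to Prop.~\ref{equality0}~\ref{equality1} and~\ref{axdxy0} as you claim --- and then apply~\ref{lax3} exactly once to extend $t_0$ (note $\df(t_0)=\rng(\lambda)\cap\dom(u)$, which your use of Prop.~\ref{props1}~\ref{astr0} handles correctly). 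Your version is shorter and avoids both Lemma~\ref{decomp0} and the duplication propositions, at the cost of a bespoke consistency lemma; the paper's version is more modular, reusing machinery it needs elsewhere (e.g.\ in Thm.~\ref{labeling0}) and isolating which labeling axiom serves which factor type. Two further minor divergences, both sound: you prove $\ext_\alpha(d_{xy})=E_{xy}$ directly in both inclusions via~\ref{lax4} and~\ref{lax2}, where the paper computes only $\ext_\alpha(d_{xx})=E_{xx}$ and then applies the already-established equation~\eqref{mult0}; and you obtain $\ext_\alpha(0)=\emptyset$ from the finiteness of $\df(t)$ against $\dom(0)=\mathbf{var}$, where the paper argues from $\rng(\alpha)=V\setminus\{0\}$.
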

\begin{proof}
\emph{Injectivity:} Let $u,v \in V$ with $u \neq v$. Then WLOG $u \not\leq v$, which implies $u \neq 0$,
and we have $\rng(\alpha)=V \setminus \{0\}$, so there exists $t \in \mathrm{NTup}(G)$ with $\alpha(t)=u$.
By Def.~\ref{extalpha0} this means $t \in \ext_\alpha(u)$, whereas $\alpha(t) = u \not \leq v$
implies $t \not\in \ext_\alpha(v)$, hence $\ext_{\alpha}(u) \neq \ext_{\alpha}(v)$.
This shows that $\ext_\alpha$ is injective.

\emph{Bottom Element:} Suppose there exists $t \in \ext_\alpha(0)$, then $\alpha(t) \leq 0$ by Def.~\ref{extalpha0},
so necessarily $\alpha(t)=0$, but this contradicts $\rng(\alpha)=V\setminus\{0\}$. This shows
\begin{align}
\label{bottom0}
\ext_\alpha(0) = \emptyset \quad.
\end{align}

\emph{Domain:}
For all $t \in \ext_\alpha(u)$ we have $\df(t)=\dom(\alpha(t))=\dom(u)$ by~\ref{lax1} and Def.~\ref{extalpha0},
which means $\ext_\alpha(u) \subseteq G^{\dom(u)}$. Then either $\ext_\alpha(u) = \emptyset$ or $\schema(\ext_\alpha(u)) = \dom(u)$.
In the case $\ext_\alpha(u) = \emptyset$, we obtain $\ext_\alpha(u) = \ext_\alpha(0)$ from~\eqref{bottom0},
and thus $u=0$ because $\ext_\alpha$ is injective, so $\schema(\ext_\alpha(u))=\schema(\emptyset)=\mathbf{var}=\dom(0)$
by~\eqref{bottom0}, \eqref{schema0} and Prop.~\ref{props1}~\ref{dom0}. So for all $u \in V$ we have
\begin{align}
\label{dom1}
\schema(\ext_{\alpha}(u)) = \dom(u) \quad.
\end{align}

\emph{Top Element:}
Since $\schema(\ext_\alpha(1))=\dom(1)=\emptyset$ by the previous result and Prop.~\ref{props1}~\ref{empty0},
we conclude $\ext_\alpha(1) \neq \emptyset$ and $\ext_\alpha(1) \subseteq G^\emptyset$, so necessarily
\begin{align}
\ext_\alpha(1)=\{\langle\rangle\} \quad.
\end{align}

\emph{Infimum:}
If $u \wedge v \neq \emptyset$, then $\dom(u \wedge v) = \dom(u) \cup \dom(v)$ by Prop.~\ref{props1}~\ref{dominf0},
and thus $\schema(\ext_{\alpha}(u \wedge v)) = \dom(u) \cup \dom(v)$ by~\eqref{dom1};
otherwise $\ext_\alpha(u \wedge v) = \emptyset$ by~\eqref{bottom0};
so in any case $\ext_\alpha(u \wedge v) \subseteq G^{\dom(u) \cup \dom(v)}$. We also obtain
$\ext_\alpha(u) \Join \ext_\alpha(v) \subseteq G^{\dom(u) \cup \dom(v)}$ from Def.~\ref{natjoin0} and~\eqref{dom1}.
So in order to show $\ext_{\alpha}(u \wedge v) = \ext_\alpha(u) \Join \ext_\alpha(v)$,
we may assume $t \in G^{\dom(u) \cup \dom(v)}$, and obtain
\begin{align}
t \in \ext_\alpha(u \wedge v) &\underset{\text{Def.}\,\ref{extalpha0}}{\Leftrightarrow} \alpha(t) \leq u \wedge v \label{help0} \\
&\Leftrightarrow \alpha(t) \leq u \text{ and } \alpha(t) \leq v \\
&\underset{\text{Prop.}\,\ref{props1}\,\ref{proj2}}{\Leftrightarrow} \alpha(t) \cdot \pi_{\dom(u)} \leq u \text{ and }
    \alpha(t) \cdot \pi_{\dom(v)} \leq v \\
&\underset{\ref{lax2}}{\Leftrightarrow} \alpha(t|_{\dom(u)}) \leq u \text{ and } \alpha(t|_{\dom(v)}) \leq v \\
&\underset{\text{Def.}\,\ref{extalpha0}}{\Leftrightarrow} t|_{\dom(u)} \in \ext_\alpha(u) \text{ and } t|_{\dom(v)} \in \ext_\alpha(v) \\
&\underset{\text{Def.}\,\ref{natjoin0}}{\Leftrightarrow} t \in \ext_\alpha(u) \Join \ext_\alpha(v)
\end{align}
(noting, for the reverse direction in~\eqref{help0}, that $\alpha(t) \leq u \wedge v$ implies $u \wedge v \neq 0$,
and thus $\dom(u \wedge v) = \dom(u) \cup \dom(v)$). This shows
\begin{align}
\label{natjoin1}
\ext_\alpha(u \wedge v) = \ext_\alpha(u) \Join \ext_\alpha(v) \quad.
\end{align}

\emph{Right Multiplication:}
In the case $u = 0$ we have $\ext_{\alpha}(0 \cdot \lambda) = \ext_{\alpha}(0) = \emptyset = \emptyset \cdot \lambda = \ext_{\alpha}(0) \cdot \lambda$
by~\ref{axzero0}, \eqref{bottom0} and~\eqref{rmult3}, and we are done. So now let $u \neq 0$. If, for an arbitrary $\lambda \in \mathrm{NTup}(G)$,
the astriction $\lambda|^{\dom(u)}:\lambda^{-1}(\dom(u)) \rightarrow \dom(u)$ satisfies
$\ext_{\alpha}(u \cdot \lambda|^{\dom(u)}) = \ext_{\alpha}(u) \cdot \lambda|^{\dom(u)}$, then also
$\ext_{\alpha}(u \cdot \lambda) = \ext_{\alpha}(u \cdot \lambda|^{\dom(u)}) = \ext_{\alpha}(u) \cdot \lambda|^{\dom(u)} = \ext_{\alpha}(u) \cdot \lambda$
by Prop.~\ref{props1}~\ref{astr0} (which applies to tables by Thm.~\ref{orbital0}). So WLOG we can assume $\lambda:Z \rightarrow \dom(u)$
for some $Z \in \mathfrak{P}_{\mathrm{fin}}(\mathbf{var})$.

Let $t \in \ext_{\alpha}(u)$, i.e. $\df(t) = \dom(u)$ and $\alpha(t) \leq u$. Then $\df(t \circ \lambda) = Z = \dom(u \cdot \lambda)$
by~\ref{axpreimg0}, and $\alpha(t \circ \lambda) = \alpha(t) \cdot \lambda \leq u \cdot \lambda$ by~\ref{lax2} and~\ref{axmult0},
so $t \circ \lambda \in \ext_{\alpha}(u \cdot \lambda)$. This shows one inclusion,
\begin{align}
\label{inclusion0}
\ext_{\alpha}(u) \cdot \lambda \subseteq \ext_{\alpha}(u \cdot \lambda) \quad.
\end{align}
For the other inclusion, we consider a decomposition $\lambda = \pi_X \circ \xi \circ \delta$ by Lemma~\ref{decomp0},
i.e. $\pi_X:X \rightarrow \dom(u)$ is a partial identity (and thus $X \subseteq \dom(u)$),
$\xi:Y \twoheadrightarrow X$ is a bijection, and $\delta:Z \twoheadrightarrow Y$ is a folding (and thus $Y \subseteq Z$),
where $X,Y \in \mathfrak{P}_{\mathrm{fin}}(\mathbf{var})$.
\begin{enumerate}
\item \emph{Partial identity $\pi_X:X \rightarrow \dom(u)$:}
Let $t \in \ext_{\alpha}(u \cdot \pi_X)$, i.e. $\alpha(t) \leq u \cdot \pi_X$ and $\df(t) = \dom(u \cdot \pi_X) = X$ by~\ref{axpreimg0}.
By~\ref{lax3} there exists $\tilde{t} \in G^{\dom(u)}$ with $\alpha(\tilde{t}) \leq u$ and $\tilde{t}|_X = t$,
so $\tilde{t} \in \ext_{\alpha}(u)$ and thus $t = \tilde{t} \circ \pi_X \in \ext_{\alpha}(u) \cdot \pi_X$.
This shows $\ext_{\alpha}(u \cdot \pi_X) \subseteq \ext_{\alpha}(u) \cdot \pi_X$, so combined with~\eqref{inclusion0} we have
\begin{align}
\label{factor0}
\ext_{\alpha}(u \cdot \pi_X) = \ext_{\alpha}(u) \cdot \pi_X \quad.
\end{align}

\item \emph{Bijection $\xi:Y \twoheadrightarrow X$:} We abbreviate $v:=u \cdot \pi_X$ and note $\dom(v) = X$.
Let $t \in \ext_{\alpha}(v \cdot \xi)$, i.e. $\alpha(t) \leq v \cdot \xi$
and $\df(t) = \dom(v \cdot \xi) = Y$ by~\ref{axpreimg0}. Then $\df(t \circ \xi^{-1})=X$, and $\alpha(t \circ \xi^{-1}) = \alpha(t) \cdot \xi^{-1}
\leq v \cdot \xi \cdot \xi^{-1} = v \cdot \pi_X = v$ by~\ref{lax2}, \ref{axmult0}, \ref{axsemi0} and~\ref{axneut0},
which means $t \circ \xi^{-1} \in \ext_{\alpha}(v)$, and thus $t = (t \circ \xi^{-1}) \circ \xi \in \ext_{\alpha}(v) \cdot \xi$.
This shows $\ext_{\alpha}(v \cdot \xi) \subseteq \ext_{\alpha}(v) \cdot \xi$. Combining this with~\eqref{inclusion0} and resolving $v$, we obtain
\begin{align}
\label{factor1}
\ext_{\alpha}(u \cdot \pi_X \cdot \xi) = \ext_{\alpha}(u \cdot \pi_X) \cdot \xi
\end{align}

\item \emph{Folding $\delta:Z \twoheadrightarrow Y$:} We abbreviate $w:=u \cdot \pi_X \cdot \xi$ and note $\dom(w) = Y$.
Let $t \in \ext_{\alpha}(w \cdot \delta)$, i.e. $\alpha(t) \leq w \cdot \delta$
and $\df(t) = \dom(w \cdot \delta) = Z$ by~\ref{axpreimg0}. Then $\df(t \circ \pi_Y)=Y$, and $\alpha(t \circ \pi_Y) = \alpha(t) \cdot \pi_Y
\leq w \cdot \delta \cdot \pi_Y = w \cdot \pi_Y = w$ by~\ref{lax2}, \ref{axmult0}, Prop.~\ref{folding0}, \ref{axsemi0} and \ref{axneut0},
which means $t \circ \pi_Y \in \ext_{\alpha}(w)$. Also, from $\alpha(t) \leq w \cdot \delta$ follows $\alpha(t) \leq e_\delta$
by Prop.~\ref{duplication0}, so for all $z \in Z$ we have $\alpha(t) \leq d_{z\delta(z)}$ by Def.~\ref{ediag0}, and thus
$t(z)=t(\delta(z))$ by~\ref{lax4}, which means $t = t \circ \delta$; since $t = (t \circ \pi_Y) \circ \delta \in \ext_{\alpha}(w) \cdot \delta$.
This shows $\ext_{\alpha}(w \cdot \delta) \subseteq \ext_{\alpha}(w) \cdot \delta$. Combining this with~\eqref{inclusion0} and resolving $w$, we obtain
\begin{align}
\label{factor2}
\ext_{\alpha}(u \cdot \pi_X \cdot \xi \cdot \delta) = \ext_{\alpha}(u \cdot \pi_X \cdot \xi) \cdot \delta
\end{align}
\end{enumerate}
From~\eqref{factor2}, \eqref{factor1} and~\eqref{factor0} combined follows
$\ext_{\alpha}(u \cdot \pi_X \cdot \xi \cdot \delta) = \ext_{\alpha}(u) \cdot \pi_X \cdot \xi \cdot \delta$,
and applying $\lambda=\pi_X \circ \xi \circ \delta$ and~\ref{axsemi0} results in
\begin{align}
\label{mult0}
\ext_{\alpha}(u \cdot \lambda) = \ext_{\alpha}(u) \cdot \lambda \quad.
\end{align}

\emph{Diagonals:}
Let $x,y \in \mathbf{var}$. For all $t \in \ext_{\alpha}(d_{xx})$ we have $\df(t) = \dom(\alpha(t)) = \dom(d_{xx}) = \{x\}$
by~\ref{lax1}, Def.~\ref{extalpha0} and Prop.~\ref{props1}~\ref{domdxy0}, so $\ext_{\alpha}(d_{xx}) \subseteq G^{\{x\}}$;
and conversely, for all $t \in G^{\{x\}}$, we have $\alpha(t) \leq d_{xx}$ by~\ref{axdom0},
so in fact $\ext_{\alpha}(d_{xx}) = G^{\{x\}} = E_{xx}$ (cf.~\eqref{exy0}), and thus
\begin{align*}
\ext_{\alpha}(d_{xy}) \underset{\ref{axdxy0}}{=} \ext_{\alpha}(d_{xx} \cdot \tfrac{xx}{xy})
\underset{\eqref{mult0}}{=} \ext_{\alpha}(d_{xx}) \cdot \tfrac{xx}{xy}
= E_{xx} \cdot \tfrac{xx}{xy}
\underset{\eqref{exy0}}{=} E_{xy} \quad.
\end{align*}
\end{proof}

\subsection{Quasi-Labelings}
\begin{definition}[Quasi-Labeling]
A \emph{quasi-labeling} of an orbital semilattice $V$ over a nonempty set $G$ is a function
$\alpha:\mathrm{NTup}(G) \rightarrow V$ which satisfies \ref{lax1}, \ref{lax2} and~\ref{lax3}.
\end{definition}

\begin{theorem}
\label{labeling0}
Let $\alpha:\mathrm{NTup}(G) \rightarrow V$ be a quasi-labeling. Then
\begin{align}
\label{cong0}
g \sim h \; :\Leftrightarrow \; \alpha(\langle x_1\mathord{:}g, x_2\mathord{:}h \rangle) \leq d_{x_1x_2}
\end{align}
defines an equivalence relation on $G$ which satisfies
\begin{align}
\label{exchange0}
g_1 \sim h_1, \dots, g_n \sim h_n \; \Rightarrow \;
 \alpha(\langle y_1\mathord{:}g_1,\dots,y_n\mathord{:}g_n \rangle) = \alpha(\langle y_1\mathord{:}h_1,\dots,y_n\mathord{:}h_n \rangle)
\end{align}
for all $y_1,\dots,y_n \in \mathbf{var}$ and $g_1,\dots,g_n,h_1,\dots,h_n \in G$, and the induced function
\begin{align}
\label{quotient0}
\bar{\alpha}:\begin{cases}
\mathrm{NTup}(G/\mathord{\sim}) \rightarrow V \\
\langle y_1\mathord{:}[g_1],\dots,y_n\mathord{:}[g_n] \rangle \mapsto \alpha(\langle y_1\mathord{:}g_1,\dots,y_n\mathord{:}g_n \rangle)
\end{cases}
\end{align}
is a tuple labeling.
\end{theorem}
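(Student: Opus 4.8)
The plan is to treat the three assertions in turn, relying throughout on the observation that $\alpha(t)\neq 0$ for every $t\in\mathrm{NTup}(G)$ --- indeed $\dom(\alpha(t))=\df(t)$ is finite by~\ref{lax1}, whereas $\dom(0)=\mathbf{var}$ by Prop.~\ref{props1}~\ref{dom0} --- so the nonzero hypotheses of~\ref{axequal0} never obstruct the argument. The workhorse is the following generalization of the defining formula~\eqref{cong0}: for distinct $z_1,z_2\in\df(t)$,
\begin{equation*}
\alpha(t)\le d_{z_1z_2}\ \Longleftrightarrow\ t(z_1)\sim t(z_2).\tag{$\ast$}
\end{equation*}
I would prove $(\ast)$ by projecting onto $\{z_1,z_2\}$ via~\ref{axproj0} and~\ref{axmult0}, so that $\alpha(t)\cdot\pi_{\{z_1,z_2\}}=\alpha(t|_{\{z_1,z_2\}})$ by~\ref{lax2} and $d_{z_1z_2}\cdot\pi_{\{z_1,z_2\}}=d_{z_1z_2}$ by~\ref{axneut0}, and then transporting $\{z_1,z_2\}$ to $\{x_1,x_2\}$ by an injective renaming, which turns $d_{z_1z_2}$ into $d_{x_1x_2}$ through Prop.~\ref{equality0}~\ref{equality2}.

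That~\eqref{cong0} is an equivalence relation I would then establish as follows. Reflexivity is the direct computation $\alpha(\langle x_1\mathord{:}g,x_2\mathord{:}g\rangle)=\alpha(\langle x_1\mathord{:}g\rangle)\cdot\tfrac{x_1x_1}{x_1x_2}\le d_{x_1x_1}\cdot\tfrac{x_1x_1}{x_1x_2}=d_{x_1x_2}$ (using~\ref{lax2}, \ref{axmult0}, \ref{axdom0}, \ref{axdxy0}); symmetry is immediate from $(\ast)$ and $d_{x_1x_2}=d_{x_2x_1}$ (Prop.~\ref{symmetry0}). For transitivity, given $g\sim h$ and $h\sim k$, I would consider $t:=\langle x_1\mathord{:}g,x_2\mathord{:}h,x_3\mathord{:}k\rangle$; restricting to pairs of coordinates and using $\alpha(t)\le\alpha(t)\cdot\pi_Y$ (\ref{axproj0}) with~\ref{lax2} gives $\alpha(t)\le d_{x_1x_2}$ and $\alpha(t)\le d_{x_2x_3}$, whence $g\sim k$ follows from $(\ast)$ for the pair $x_1,x_3$ once we have the derived diagonal law
\begin{equation*}
d_{x_1x_2}\wedge d_{x_2x_3}\le d_{x_1x_3}.\tag{$\dagger$}
\end{equation*}
I would obtain $(\dagger)$ by setting $w:=d_{x_1x_2}\wedge d_{x_2x_3}$ (assuming $w\neq 0$, else trivial), observing $w\le e_\delta$ (Def.~\ref{ediag0}) for the folding $\delta$ collapsing $\{x_1,x_2,x_3\}$ onto $x_2$, so that $w=w\cdot\delta$ by Prop.~\ref{duplication1}; then $w\le w\cdot\pi_{\{x_1,x_3\}}=w\cdot\tfrac{x_2x_2}{x_1x_3}\le d_{x_2x_2}\cdot\tfrac{x_2x_2}{x_1x_3}=d_{x_1x_3}$, using $w=w\cdot\delta$ and~\ref{axsemi0} for the middle equality, then~\ref{axmult0} together with Prop.~\ref{equality0}~\ref{equality1} and~\ref{axdxy0}.

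The core of the theorem is~\eqref{exchange0}, which I would reduce to a single-coordinate exchange: \emph{if $g\sim h$ and two named tuples $t,t'$ agree except at one position $z\in\df(t)$, where $t(z)=g$ and $t'(z)=h$, then $\alpha(t)=\alpha(t')$.} To prove this, pick a fresh variable $z'\notin\df(t)$ and extend both $t$ and $t'$ by $z'\mapsto h$, obtaining $s$ and $s'$. By $(\ast)$ we have $\alpha(s),\alpha(s')\le d_{zz'}$ (their values at $z,z'$ being $g\sim h$, resp.\ $h\sim h$). Collapsing the position $z$ by~\ref{axequal0} (with $d_{z'z}=d_{zz'}$) rewrites each as $\alpha(s)=\alpha(r)\wedge d_{zz'}=\alpha(s')$, where $r$ is the common tuple obtained by deleting $z$ --- common precisely because $t$ and $t'$ agree off $z$. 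Projecting back to $\df(t)$ via~\ref{lax2} then yields $\alpha(t)=\alpha(s)\cdot\pi_{\df(t)}=\alpha(s')\cdot\pi_{\df(t)}=\alpha(t')$, and~\eqref{exchange0} follows by rewriting $g_1,\dots,g_n$ into $h_1,\dots,h_n$ one coordinate at a time. I expect this single-coordinate exchange to be the main obstacle: it is the one place where the equality axiom~\ref{axequal0} must be applied in exactly the right form to exactly the right auxiliary tuple.

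Finally,~\eqref{exchange0} says precisely that $\alpha(\langle y_1\mathord{:}g_1,\dots,y_n\mathord{:}g_n\rangle)$ depends only on the classes $[g_i]$, so $\bar\alpha$ in~\eqref{quotient0} is well defined. To verify that $\bar\alpha$ is a tuple labeling I would argue by lifting: writing $\bar t=\langle y_1\mathord{:}[g_1],\dots,y_n\mathord{:}[g_n]\rangle$ and $t=\langle y_1\mathord{:}g_1,\dots,y_n\mathord{:}g_n\rangle$, we have $\bar\alpha(\bar t)=\alpha(t)$ and $\df(\bar t)=\df(t)$, so~\ref{lax1} and~\ref{lax2} pass from $\alpha$ to $\bar\alpha$ verbatim (using $\overline{t\circ\lambda}=\bar t\circ\lambda$). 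For~\ref{lax3}, apply~\ref{lax3} for $\alpha$ to the lift $t$ to obtain a witness $\tilde t\in G^{\dom(v)}$ and pass to its class $\overline{\tilde t}$. The remaining axiom~\ref{lax4} is just the forward direction of $(\ast)$: if $\bar\alpha(\bar t)=\alpha(t)\le d_{z_1z_2}$ then $t(z_1)\sim t(z_2)$, i.e.\ $\bar t(z_1)=\bar t(z_2)$. Hence $\bar\alpha$ is a tuple labeling, completing the proof.
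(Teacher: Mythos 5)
Your proposal is correct, but it takes a genuinely different route from the paper at the two central steps. Your criterion $(\ast)$ is exactly the paper's~\eqref{gendef0} (obtained the same way, via renaming and Prop.~\ref{equality0}~\ref{equality2}) extended to arbitrary tuples by Prop.~\ref{props1}~\ref{proj2}, and reflexivity and symmetry are handled identically. The divergence: the paper proves~\eqref{exchange0} in one shot --- it extends $s$ by a variable-disjoint copy of $t$ to a tuple $\tilde{s}$ on $X \cup Y$, shows $\alpha(\tilde{s}) \leq e_\delta$ for the folding $\delta$ collapsing the copy onto $X$, invokes Prop.~\ref{duplication1} to get $\alpha(\tilde{s}) = \alpha(\tilde{s}) \cdot \delta$, and then slides in the swap permutation $\tau$ using $\delta \circ \tau = \delta$ --- whereas you replace one coordinate at a time using a fresh variable $z'$ and the raw axiom~\ref{axequal0}; your collapse step $\alpha(s) = \alpha(r) \wedge d_{z'z} = \alpha(s')$ is sound, including the nonzero-ness of all $\alpha$-values that \ref{axequal0} requires, which you flagged up front. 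Consequently the paper gets transitivity for free as a corollary of~\eqref{exchange0}, while you prove it beforehand via the derived law $(\dagger)$, $d_{x_1x_2} \wedge d_{x_2x_3} \leq d_{x_1x_3}$, again through Prop.~\ref{duplication1}; this checks out (in particular $d_{x_2x_2} \cdot \tfrac{x_2x_2}{x_1x_3} = d_{x_1x_3}$ follows from \ref{axsemi0}, Prop.~\ref{equality0}~\ref{equality1} and~\ref{axdxy0}), but it is redundant, since your single-coordinate exchange plus reflexivity already yields transitivity exactly as in the paper. What each approach buys: yours is more elementary at the exchange step (direct use of the axiom~\ref{axequal0} rather than the derived Prop.~\ref{duplication1}) at the cost of an induction over coordinates and the separate lemma $(\dagger)$; the paper's doubled-tuple argument is a single global computation. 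One small loose end in your~\ref{lax4} verification: before invoking the forward direction of $(\ast)$ you must observe that $z_1, z_2 \in \df(t)$, which follows from $\{z_1,z_2\} = \dom(d_{z_1z_2}) \subseteq \dom(\alpha(t)) = \df(t)$ by Prop.~\ref{props1}~\ref{domain0}, Prop.~\ref{props1}~\ref{domdxy0} and~\ref{lax1}, and dispose of the trivial case $z_1 = z_2$, as the paper does; this is easily filled and does not affect correctness.
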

\begin{proof}
For all $g \in G$, we have $\alpha(\langle x_1:g\rangle) \leq d_{x_1x_1}$ by~\ref{lax1} and~\ref{axdom0}, and thus
\begin{multline*}
\alpha(\langle x_1:g,x_2:g \rangle)
= \alpha(\langle x_1:g \rangle \circ \tfrac{x_1x_1}{x_1x_2})
\underset{\ref{lax2}}{=} \alpha(\langle x_1:g \rangle) \cdot \tfrac{x_1x_1}{x_1x_2} \\
\underset{\ref{axmult0}}{\leq} d_{x_1x_1} \cdot \tfrac{x_1x_1}{x_1x_2}
\underset{\ref{axdxy0}}{=} d_{x_1x_2} \quad,
\end{multline*}
which shows that the relation $\mathord{\sim} \subseteq G \times G$ is reflexive.

Next we show that the variables $x_1,x_2$ in~\eqref{cong0} can be replaced by any other $z_1,z_2 \in \mathbf{var}$ with $z_1 \neq z_2$:
If $\alpha(\langle x_1:g,x_2:h\rangle) \leq d_{x_1x_2}$, then
\begin{align*}
\alpha(\langle z_1:g, z_2:h\rangle)
\underset{\ref{lax2}}{=} \alpha(\langle x_1:g,x_2:h\rangle) \cdot \tfrac{x_1x_2}{z_1z_2}
\underset{\ref{axmult0}}{\leq} d_{x_1x_2} \cdot \tfrac{x_1x_2}{z_1z_2}
&\underset{\text{Prop.}\,\ref{equality0}\,\ref{equality2}}{=} d_{z_1z_2} \quad,
\end{align*}
and the converse implication is shown accordingly, so we obtain
\begin{align}
\label{gendef0}
\begin{split}
\text{for all $g,h \in G$ and $z_1,z_2 \in \mathbf{var}$ with $z_1\neq z_2$\,: } \\
    g \sim h \,\Leftrightarrow\, \alpha(\langle z_1:g,z_2:h \rangle) \leq d_{z_1z_2}\quad.\quad
\end{split}
\end{align}
In particular, if $g \sim h$, then $\langle x_1:h,x_2:g \rangle \leq d_{x_2x_1} = d_{x_1x_2}$ by~\eqref{gendef0} and Prop.~\ref{symmetry0},
so $h \sim g$, which shows that $\sim$ is symmetric.

Next we show~\eqref{exchange0}.
Let $s,t \in \mathrm{NTup}(G)$ with $\df(s)=\df(t)=:X$ such that $s(x) \sim t(x)$ for all $x \in X$;
we have to show $\alpha(s)=\alpha(t)$. Let $Y \subseteq \mathbf{var}$ with $\#Y=\#X$ and $X \cap Y = \emptyset$.
Then there exists a bijection $\xi:Y \rightarrow X$, and the named tuple $\tilde{s}:X \cup Y \rightarrow G$, which extends $s$ by a
variable-disjoint copy of $t$ via
\begin{align}
\label{merge0}
\tilde{s} := \begin{cases}
s(x) \quad \text{ if } x \in X \\
t(\xi(x)) \quad \text{ if } x \in Y
\end{cases}
\quad,
\end{align}
is well-defined. Also, we define a folding $\delta:X \cup Y \rightarrow X$ with
\begin{align}
\label{folding1}
\delta(x) := \begin{cases}
x \quad \text{ if } x \in X \\
\xi(x) \quad \text{ if } x \in Y
\end{cases}
\end{align}
and a permutation $\tau:X \cup Y \rightarrow X \cup Y$ with
\begin{align}
\label{swap0}
\tau(x) := \begin{cases}
\xi^{-1}(x) \quad \text{ if } x \in X \\
\xi(x) \quad \text{ if } x \in Y
\end{cases}
\quad.
\end{align}
For all $x \in X$, we obtain $\tilde{s}(x)=\tilde{s}(\delta(x))$ from~\eqref{merge0},
which implies $\tilde{s}(x) \sim \tilde{s}(\delta(x))$ because $\sim$ is reflexive.
Likewise, for all $y \in Y$ we have $s(\xi(y)) \sim t(\xi(y))$ by assumption,
and thus $\tilde{s}(\delta(y)) = \tilde{s}(\xi(y)) = s(\xi(y)) \sim t(\xi(y)) = \tilde{s}(y)$ by~\eqref{folding1} and~\eqref{merge0},
which implies $\tilde{s}(y) \sim \tilde{s}(\delta(y))$ because $\sim$ is symmetric. Taken together,
\begin{align}
\label{cong1}
\forall z \in X \cup Y: \tilde{s}(z) \sim \tilde{s}(\delta(z)) \quad.
\end{align}
Consequently, for all $z \in X \cup Y$ we have
\begin{align}
\alpha(\tilde{s}) \underset{\ref{axproj0}}{\leq} \alpha(\tilde{s}) \cdot \pi_{\{z,\delta(z)\}}
\underset{\ref{lax2}}{=} \alpha(\langle z:\tilde{s}(z),\delta(z):\tilde{s}(\delta(z))\rangle)
\underset{\substack{\eqref{cong1}\\\eqref{gendef0}}}{\leq} d_{z\delta(z)}
\end{align}
and thus $\alpha(\tilde{s}) \leq e_\delta$ by Def.~\ref{ediag0}. Using~\ref{lax1} we obtain
$\dom(\alpha(\tilde{s})) = \df(\tilde{s}) = X \cup Y = \df(\delta)$, so Prop.~\ref{duplication1} provides
\begin{align}
\label{reconstruction2}
\alpha(\tilde{s}) = \alpha(\tilde{s}) \cdot \delta \quad,
\end{align}
and we finally obtain
\begin{align*}
\alpha(s)
&\underset{\eqref{merge0}}{=} \alpha(\tilde{s} \circ \pi_X)
\underset{\ref{lax2}}{=} \alpha(\tilde{s}) \cdot \pi_X
\underset{\eqref{reconstruction2}}{=} \alpha(\tilde{s}) \cdot \delta \cdot \pi_X
\underset{\substack{\eqref{folding1}\\\eqref{swap0}}}{=} \alpha(\tilde{s}) \cdot (\delta \circ \tau) \cdot \pi_X \\
&\underset{\ref{axsemi0}}{=} \alpha(\tilde{s}) \cdot \delta \cdot \tau \cdot \pi_X
\underset{\eqref{reconstruction2}}{=} \alpha(\tilde{s}) \cdot \tau \cdot \pi_X
\underset{\ref{lax2}}{=} \alpha(\tilde{s} \circ \tau \circ \pi_X)
\underset{\substack{\eqref{swap0}\\\eqref{merge0}}}{=} \alpha(t) \quad,
\end{align*}
which proves~\eqref{exchange0}.

If $g_1 \sim g_2$ and $g_2 \sim g_3$, then $\alpha(\langle x_1:g_1, x_2:g_3 \rangle) =  \alpha(\langle x_1:g_1,x_2:g_2\rangle) \leq d_{x_1x_2}$
by~\eqref{exchange0} and~\eqref{cong0}, hence $g_1 \sim g_3$ by~\eqref{cong0}, which shows that $\sim$ is transitive.
So $\sim$ is an equivalence relation, and we obtain from~\eqref{exchange0} that $\bar{\alpha}$ in~\eqref{quotient0}
is well-defined.

It remains to be shown that $\bar{\alpha}$ is a tuple labeling. The function $q:G \rightarrow G / \sim$ with $q(g) := [g]$
allows to express~\eqref{quotient0} as
\begin{align}
\label{abstract0}
\bar{\alpha}(q \circ t) = \alpha(t) \quad,
\end{align}
and every element of $\mathrm{NTup}(G / \sim)$ can be represented as $q \circ t$ for some $t \in \mathrm{NTup}(G)$.
Consequently, the properties~\ref{lax1}, \ref{lax2} and \ref{lax3}, which inherit from the respective properties of the quasi-labeling $\alpha$,
can be shown as follows:
\ref{lax1}: $\dom(\bar{\alpha}(q \circ t)) = \dom(\alpha(t)) = \df(t) = \df(q \circ t)$
\ref{lax2}: $\bar{\alpha}((q \circ t) \circ \lambda) = \bar{\alpha}(q \circ (t \circ \lambda)) = \alpha(t \circ \lambda) = \alpha(t) \cdot \lambda
= \bar{\alpha}(q \circ t) \cdot \lambda$
\ref{lax3}: If $\bar{\alpha}(q \circ t) = v \cdot \pi_{\df(q \circ t)}$, then $\alpha(t) = \bar{\alpha}(q \circ t) = v \cdot \pi_{\df(q \circ t)}
= v \cdot \pi_{\df(t)}$, so $\alpha(\tilde{t}) = v$ for some $\tilde{t} \in \mathrm{NTup}(G)$ with $\tilde{t} \circ \pi_{\df(t)}=t$,
and thus $q \circ \tilde{t} \in \mathrm{NTup}(G / \sim)$ satisfies $\bar{\alpha}(q \circ \tilde{t}) = \alpha(\tilde{t}) = v$ as well as
$(q \circ \tilde{t}) \circ \pi_{\df(q \circ t)} = q \circ (\tilde{t} \circ \pi_{\df(t)}) = q \circ t$.

\ref{lax4}: Let $z_1,z_2 \in \mathbf{var}$ with $z_1 \neq z_2$ (the case $z_1=z_2$ is trivial).
If $\bar{\alpha}(q \circ t) \leq d_{z_1z_2}$, i.e. $\alpha(t) \leq d_{z_1z_2}$,
then $\dom(d_{z_1z_2}) \subseteq \dom(\alpha(t)) = \df(t)$ by Prop.~\ref{props1}~\ref{domain0} and~\ref{lax1},
so $z_1,z_2 \in \df(t)$ by Prop.~\ref{props1}~\ref{domdxy0}, and we obtain
$\alpha(\langle z_1:t(z_1),z_2:t(z_2) \rangle) = \alpha(t) \cdot \pi_{\{z_1,z_2\}} \leq d_{z_1z_2}$ using~\ref{lax2} and Prop.~\ref{props1}~\ref{proj2},
which means $t(z_1) \sim t(z_2)$ by~\eqref{gendef0}, and thus $[t(z_1)] = [t(z_2)]$, i.e. $(q \circ t)(z_1) = (q \circ t)(z_2)$.
This concludes the proof that $\bar{\alpha}$ is a tuple labeling.
\qed
\end{proof}

\section{Representation Theorem}
\subsection{Construction of a Quasi-Labeling}
\label{construction1}
With each orbital semilattice $V$, we associate an algebraic signature $S^V$, where
\begin{align}
\label{sign0}
S^V_n := \{v \in V \mid \dom(v) = \{x_1,\dots,x_{n+1}\}\}
\end{align}
contains the $n$-ary function symbols of $S^V$. In particular,
$S^V_0 = \{v \in V \mid \dom(v) = \{x_1\}\}$ contains the constants. A \emph{ground term} over $S^V$ is a sequence $vt_1\dots t_n$
where $v \in S^V_n$ and $t_1,\dots,t_n$ are ground terms; for $n=0$ these are the constants.
The set of ground terms is denoted by $\mathrm{GT}(S^V)$.

A set $B \subseteq \mathrm{GT}(S^V)$ is \emph{subterm-closed} if $vt_1\dots t_n \in B$ implies $t_1,\dots,t_n \in B$.
The \emph{subterm-closure} $A^{\downarrow}$ of a set $A \subseteq \mathrm{GT}(S^V)$ is the smallest subterm-closed superset of $A$.
A tuple $b \in \mathrm{NTup}(\mathrm{GT}(S^V))$ is a \emph{base tuple} if $b$ is injective and $\rng(b)$ is subterm-closed,
and we set $\mathrm{BTup}(\mathrm{GT}(S^V)) := \{b \in \mathrm{NTup}(\mathrm{GT}(S^V)) \mid b \text{ is base tuple}\}$.
With each $t \in \mathrm{NTup}(\mathrm{GT}(S^V))$ we associate some base tuple $b_t$ with $\rng(b_t)=\rng(t)^{\downarrow}$,
which exists since $\rng(t)^{\downarrow}$ is finite, and define in sequence
\begin{align}
\eta_{vt_1\dots t_n} &:= \langle x_1\mathord{:}t_1,\dots,x_n\mathord{:}t_n,x_{n+1}\mathord{:}vt_1\dots t_n\rangle \quad,\\
\tilde{\kappa}(b) &:= \bigwedge_{vt_1\dots t_n \in \rng(b)} v \cdot (\eta_{vt_1\dots t_n}^{-r} \circ b) \quad,\label{defk0}\\
\tilde{\alpha}(t) &:= \tilde{\kappa}(b_t) \cdot (b_t^{-1} \circ t) \label{defa0}
\end{align}
for all $vt_1\dots t_n \in \mathrm{GT}(S^V)$, $b \in \mathrm{BTup}(\mathrm{GT}(S^V))$
and $t \in \mathrm{NTup}(\mathrm{GT}(S^V))$, noting that the infimum in~\eqref{defk0} is finite.
Moreover, we inductively define $H^{(0)} := \emptyset$ and
\begin{multline}
\label{hdef0}
H^{(k+1)} := H^{(k)} \cup \{vt_1\dots t_n \in \mathrm{GT}(S^V) \mid
t_1,\dots,t_n \in H^{(k)} \text{ pairwise distinct}, \\
v \cdot \pi_{\{x_1,\dots,x_n\}} = \tilde{\alpha}(\langle x_1\mathord{:}t_1,\dots,x_n\mathord{:}t_n \rangle)
\}
\end{multline}
for $k \in \mathbb{N}$. Note that $H^{(1)}=S^V_0$ (the set of constants), since
$v \cdot \pi_{\emptyset} = \tilde{\alpha}(\langle\rangle)$ always holds
(we have $b_{\langle\rangle} = \langle\rangle$, and thus $\tilde{\alpha}(\langle\rangle) = \tilde{\kappa}(\langle\rangle) \cdot \pi_{\emptyset}
= 1 \cdot \pi_{\emptyset} = 1 = v \cdot \pi_{\emptyset}$ by \eqref{defa0}, \eqref{defk0} and \ref{axone0}).
Set $H := \bigcup_{k \in \mathbb{N}} H^{(k)}$. We will show that
$\alpha := \tilde{\alpha}|_{\mathrm{NTup}(H)}$ is a quasi-labeling. First, note that, by construction, $H$ has the following property:
\begin{proposition}
\label{hcon0}
A term $vt_1\dots t_n$ is contained in $H$ if and only if $t_1,\dots,t_n \in H$, $t_1,\dots,t_n$ pairwise distinct,
$\dom(v)=\{x_1,\dots,x_{n+1}\}$ and $v \cdot \pi_{\{x_1,\dots,x_n\}} = \alpha(\langle x_1\mathord{:}t_1,\dots,x_n\mathord{:}t_n\rangle)$.
\end{proposition}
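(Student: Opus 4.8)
The plan is to unwind the inductive definition of $H$ and to reconcile the two labelings that occur: the auxiliary $\tilde\alpha$ appearing in the defining clause~\eqref{hdef0}, and the restriction $\alpha = \tilde\alpha|_{\mathrm{NTup}(H)}$ appearing in the statement. First I would record two elementary observations. The stages form an increasing chain $H^{(0)} \subseteq H^{(1)} \subseteq \dots$, since by~\eqref{hdef0} each $H^{(k+1)}$ is $H^{(k)}$ together with further terms; hence $H = \bigcup_{k} H^{(k)}$ is a directed union, so every finite subset of $H$ is contained in a single $H^{(k)}$. Moreover, whenever $t_1,\dots,t_n \in H$ the tuple $\langle x_1\mathord{:}t_1,\dots,x_n\mathord{:}t_n\rangle$ lies in $\mathrm{NTup}(H)$, so that $\tilde\alpha$ and $\alpha$ agree on it; this is the one place where the passage from $\tilde\alpha$ to $\alpha$ is used.

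For the forward direction, suppose $vt_1\dots t_n \in H$. Since $H^{(0)} = \emptyset$, the term enters at some stage $H^{(k+1)}$, and the defining clause~\eqref{hdef0} then directly yields that $t_1,\dots,t_n \in H^{(k)} \subseteq H$ are pairwise distinct and satisfy $v \cdot \pi_{\{x_1,\dots,x_n\}} = \tilde\alpha(\langle x_1\mathord{:}t_1,\dots,x_n\mathord{:}t_n\rangle)$. As $vt_1\dots t_n$ is a well-formed ground term over $S^V$, we have $v \in S^V_n$, i.e.\ $\dom(v) = \{x_1,\dots,x_{n+1}\}$ by~\eqref{sign0}; and since the $t_i$ lie in $H$, the second observation lets me replace $\tilde\alpha$ by $\alpha$ in the displayed equation. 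This delivers all four asserted conditions.

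Conversely, assume the four conditions. Then $v \in S^V_n$ and each $t_i \in H \subseteq \mathrm{GT}(S^V)$, so $vt_1\dots t_n$ is a ground term over $S^V$. By directedness there is a single $k$ with $t_1,\dots,t_n \in H^{(k)}$, and the coincidence of $\alpha$ and $\tilde\alpha$ on $\langle x_1\mathord{:}t_1,\dots,x_n\mathord{:}t_n\rangle$ turns the hypothesis $v \cdot \pi_{\{x_1,\dots,x_n\}} = \alpha(\langle x_1\mathord{:}t_1,\dots,x_n\mathord{:}t_n\rangle)$ into the corresponding equation with $\tilde\alpha$. The membership clause for $H^{(k+1)}$ in~\eqref{hdef0} is thereby met, so $vt_1\dots t_n \in H^{(k+1)} \subseteq H$.

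I do not expect a genuine obstacle here: the statement is essentially a reformulation of the recursive definition of $H$, and the only point requiring care is the systematic substitution of the global labeling $\alpha$ for the auxiliary $\tilde\alpha$. That substitution is licensed precisely because all the subterms $t_1,\dots,t_n$ occurring in the relevant tuples belong to $H$, together with the directedness of the chain $(H^{(k)})_{k}$, which guarantees a common stage at which the closure condition can be checked.
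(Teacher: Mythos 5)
Your proof is correct and takes the same route the paper intends: the paper offers no argument beyond ``by construction,'' and your write-up is precisely the careful unwinding of the inductive definition~\eqref{hdef0} that this phrase stands for --- monotonicity of the chain $H^{(0)} \subseteq H^{(1)} \subseteq \dots$, a common stage for the finitely many subterms $t_1,\dots,t_n$, the arity condition $\dom(v)=\{x_1,\dots,x_{n+1}\}$ coming from $vt_1\dots t_n \in \mathrm{GT}(S^V)$ via~\eqref{sign0}, and the agreement of $\alpha = \tilde\alpha|_{\mathrm{NTup}(H)}$ with $\tilde\alpha$ on tuples whose entries lie in $H$. No gap; your substitution of $\alpha$ for $\tilde\alpha$ is exactly the point that needed checking, and you justify it correctly.
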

Note that $H$ is subterm-closed by Prop.~\ref{hcon0}. So $t \in \mathrm{NTup}(H)$ implies $\rng(t)^{\downarrow} \subseteq H$,
and thus $b_t \in \mathrm{BTup}(H) = \{b \in \mathrm{NTup}(H) \mid b \text{ is base tuple}\}$. Setting $\kappa := \tilde{\kappa}|_{\mathrm{BTup}(H)}$,
and noting that $\eta_{vt_1\dots t_n}$ is injective for $vt_1\dots t_n \in H$, we may write
\begin{align}
\alpha(t) &= \kappa(b_t) \cdot (b_t^{-1} \circ t) \quad, \label{defa1} \\
\kappa(b) &= \bigwedge_{vt_1\dots t_n \in \rng(b)} v \cdot (\eta_{vt_1\dots t_n}^{-1} \circ b) \label{defk1}
\end{align}
for all $t \in \mathrm{NTup}(H)$ and $b \in \mathrm{BTup}(H)$.
\begin{proposition}
\label{props2}
Let $b \in \mathrm{BTup}(\mathrm{GT}(S^V))$.
\begin{enumerate}[label={\roman*)},leftmargin=1cm]
\item if $\tilde{\kappa}(b) \neq 0$, then $\dom(\tilde{\kappa}(b)) = \df(b)$ \label{domain1}
\item $\tilde{\kappa}(b \circ \xi) = \tilde{\kappa}(b) \cdot \xi$ for all bijections
 $\xi:X \twoheadrightarrow \df(b)$ in $\mathcal{T}_{\mathrm{fp}}(\mathbf{var})$ \label{bij0}
\item if $b = b_1 \oplus b_2$, then $\tilde{\kappa}(b) = \tilde{\kappa}(b_1) \wedge \tilde{\kappa}(b_2)$ \label{oplus0}
\end{enumerate}
\end{proposition}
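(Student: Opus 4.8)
The plan is to reduce all three parts to a single combinatorial computation of the composite $\eta_s^{-r}\circ b$ and its domain, after which the statements follow from the domain and distributivity machinery already established. The \emph{key local fact} I would record first is this: for each $s=vt_1\dots t_n\in\rng(b)$ the right inverse satisfies $\rng(\eta_s^{-r})\subseteq\df(\eta_s)=\{x_1,\dots,x_{n+1}\}=\dom(v)$, so $\eta_s^{-r}\circ b$ has range inside $\dom(v)$ and domain $\df(\eta_s^{-r}\circ b)=b^{-1}(\{t_1,\dots,t_n,s\})$. Since $b$ is subterm-closed, $t_1,\dots,t_n\in\rng(b)$, so this preimage is taken over terms actually occurring in $b$. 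As $\dom(v)$ is finite we have $v\neq 0$ by Prop.~\ref{props1}~\ref{finite0}, and whenever the factor $v\cdot(\eta_s^{-r}\circ b)$ is itself nonzero, axiom~\ref{axpreimg0} gives $\dom(v\cdot(\eta_s^{-r}\circ b))=b^{-1}(\{t_1,\dots,t_n,s\})\subseteq\df(b)$. Taking the union over $s\in\rng(b)$ and noting that each $s$ already contributes $b^{-1}(\{s\})$, injectivity of $b$ yields $\bigcup_{s}b^{-1}(\{t_1,\dots,t_n,s\})=b^{-1}(\rng(b))=\df(b)$.

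For part~i) the meet in \eqref{defk0} is finite, and if $\tilde\kappa(b)\neq 0$ then every factor is nonzero (otherwise the meet collapses to $0$ by~\ref{axzero0}); applying Prop.~\ref{props1}~\ref{dominf0} finitely often then gives $\dom(\tilde\kappa(b))=\bigcup_s\dom(v\cdot(\eta_s^{-r}\circ b))=\df(b)$ by the computation above. For part~ii) I would use~\ref{axsemi0} in the form $\eta_s^{-r}\circ b\circ\xi=(\eta_s^{-r}\circ b)\circ\xi$ to rewrite each factor of $\tilde\kappa(b\circ\xi)$ as $\bigl(v\cdot(\eta_s^{-r}\circ b)\bigr)\cdot\xi$; since $\xi$ is one-to-one with $\rng(\xi)=\df(b)$ containing every $\dom(v\cdot(\eta_s^{-r}\circ b))$, Prop.~\ref{distr0} lets me pull $\cdot\,\xi$ out of the meet, producing $\tilde\kappa(b)\cdot\xi$. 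The case where some factor vanishes is handled separately and immediately: that factor times $\xi$ is again $0$ by~\ref{axsemi0} and~\ref{axzero0}, so both sides reduce to $0$.

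For part~iii) I would start from $\rng(b)=\rng(b_1)\cup\rng(b_2)$ (the defining property of $b=b_1\oplus b_2$, with $b_1,b_2$ taken to be base tuples so that $\tilde\kappa(b_1),\tilde\kappa(b_2)$ are defined) and split the finite meet over this union using idempotency of $\wedge$. The crucial observation is that for $s\in\rng(b_1)$ the subterm-closure of $b_1$ forces $\{t_1,\dots,t_n,s\}\subseteq\rng(b_1)$; since $b$ extends the injective restriction $b_1$, the preimages $b^{-1}(r)$ and $b_1^{-1}(r)$ coincide for every $r\in\rng(b_1)$, whence $\eta_s^{-r}\circ b=\eta_s^{-r}\circ b_1$ as relations and the $s$-th factor is unchanged whether read off $b$ or $b_1$, and symmetrically for $\rng(b_2)$. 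Substituting these equalities turns the split meet into $\tilde\kappa(b_1)\wedge\tilde\kappa(b_2)$.

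I expect the main obstacle to be the bookkeeping in part~iii): carefully justifying $\eta_s^{-r}\circ b=\eta_s^{-r}\circ b_1$, where subterm-closure of the \emph{summand} $b_1$ (not merely of $b$) is genuinely needed, and checking that the idempotent splitting behaves correctly for terms lying in both $\rng(b_1)$ and $\rng(b_2)$. A secondary point of care is that the zero-factor situations in~i) and~ii) must be excluded before invoking~\ref{axpreimg0} and Prop.~\ref{distr0}, since $\dom(0)=\mathbf{var}$ would otherwise violate the domain hypothesis $\bigcup_i\dom(v_i)\subseteq\rng(\xi)$ of the distributivity law.
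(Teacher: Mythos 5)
Your proof is correct and takes essentially the same route as the paper's: the same sandwich computation $b^{-1}(s)\in\dom(v\cdot(\eta_s^{-r}\circ b))\subseteq\df(b)$ iterated through Prop.~\ref{props1}\,\ref{dominf0} for part~\ref{domain1}, the same use of Prop.~\ref{distr0} with~\ref{axsemi0} and $\rng(b\circ\xi)=\rng(b)$ for part~\ref{bij0}, and the same splitting of the meet over $\rng(b)=\rng(b_1)\cup\rng(b_2)$ with $\eta_s^{-r}\circ b=\eta_s^{-r}\circ b_i$ for part~\ref{oplus0}, where you merely make explicit the subterm-closure and injectivity bookkeeping that the paper leaves implicit. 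The only remark worth adding is that your separately handled zero-factor cases are vacuous: since each $v\in S^V_n$ has finite domain, $v\neq 0$ by Prop.~\ref{props1}\,\ref{finite0}, and then~\ref{axpreimg0} gives $v\cdot(\eta_s^{-r}\circ b)$ a finite domain, hence it is nonzero by the same proposition.
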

\begin{proof}
\textbf{\ref{domain1}} For all $vt_1\dots t_n \in \rng(b)$, using axiom~\ref{axpreimg0} and~\eqref{sign0}, we obtain
$\dom(v \cdot (\eta^{-r}_{vt_1\dots t_n} \circ b)) = (\eta^{-r}_{vt_1\dots t_n} \circ b)^{-1}(\{x_1,\dots,x_{n+1}\})$, and thus
\begin{align}
\label{sandwich0}
b^{-1}(vt_1\dots t_n) \in \dom(v \cdot (\eta^{-r}_{vt_1\dots t_n} \circ b)) \subseteq \df(b) \quad.
\end{align}
If $\tilde{\kappa}(b) \neq 0$,
then $\dom(\tilde{\kappa}(b)) = \bigcup_{vt_1\dots t_n \in \rng(b)} \dom(v \cdot (\eta^{-r}_{vt_1\dots t_n} \circ b))$
by~\eqref{defk0} and Prop.~\ref{props1}\,\ref{dominf0}, i.e.~\eqref{sandwich0} translates to
$b^{-1}(vt_1\dots v_n) \in \dom(\tilde{\kappa}(b)) \subseteq \df(b)$,
which means $\dom(\tilde{\kappa}(b)) = \df(b)$.

\textbf{\ref{bij0}}
Let $\xi:X \twoheadrightarrow \df(b)$ be a bijection in $\mathcal{T}_{\mathrm{fp}}(\mathbf{var})$.
For all $vt_1\dots t_n \in \rng(b)$, we obtain $\dom(v \cdot (\eta^{-r}_{vt_1\dots t_n} \circ b)) \subseteq \df(b) = \rng(\xi)$
from~\eqref{sandwich0}.
This allows to obtain
$\tilde{\kappa}(b) \cdot \xi = \bigwedge_{ws_1\dots s_m \in \rng(b)} w \cdot (\eta^{-r}_{ws_1\dots s_m} \circ b) \cdot \xi
= \bigwedge_{ws_1\dots s_m \in \rng(b)} w \cdot (\eta^{-r}_{ws_1\dots s_m} \circ b \circ \xi) = \tilde{\kappa}(b \circ \xi)$
using Prop.~\ref{distr0}, \ref{axsemi0} and $\rng(b)=\rng(b \circ \xi)$.

\textbf{\ref{oplus0}}
Using~\eqref{defk0} and the assumption $\rng(b)=\rng(b_1) \cup \rng(b_2)$, we obtain
$\tilde{\kappa}(b) = (\bigwedge_{vt_1\dots t_n \in \rng(b_1)} (\eta_{vt_1\dots t_n}^{-r} \circ b))
\wedge (\bigwedge_{vt_1\dots t_n \in \rng(b_2)} (\eta_{vt_1\dots t_n}^{-r} \circ b))$.
Since $b_1 \leq b$, we have $\eta^{-r}_{vt_1\dots t_n} \circ b = \eta^{-r}_{vt_1\dots t_n} \circ b_1$ for all $vt_1\dots t_n \in \rng(b_1)$,
and likewise for $b_2$; so altogether $\tilde{\kappa}(b) = \tilde{\kappa}(b_1) \wedge \tilde{\kappa}(b_2)$.
\qed
\end{proof}
\begin{proposition}
\label{welldef2}
$\tilde{\alpha}(t) = \tilde{\kappa}(b) \cdot (b^{-1} \circ t)$ for all base tuples $b$ with $\rng(b)=\rng(t)^{\downarrow}$.
\end{proposition}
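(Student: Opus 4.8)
The plan is to use the fact that $\tilde\alpha(t)$ in~\eqref{defa0} is defined via one \emph{particular} associated base tuple $b_t$ satisfying $\rng(b_t)=\rng(t)^{\downarrow}$, and to show that substituting an \emph{arbitrary} base tuple $b$ with the same range leaves the product unchanged. The key observation is that any two base tuples with equal range differ only by a bijective renaming of variables: since base tuples are injective, the map $\zeta := b_t^{-1} \circ b$ is a bijection $\df(b) \twoheadrightarrow \df(b_t)$, and it satisfies $b_t \circ \zeta = (b_t \circ b_t^{-1}) \circ b = \pi_{\rng(b_t)} \circ b = b$, where the last step uses the hypothesis $\rng(b)=\rng(b_t)=\rng(t)^{\downarrow}$.

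First I would rewrite $\tilde\kappa(b)$ in terms of $\tilde\kappa(b_t)$. Applying Proposition~\ref{props2}~\ref{bij0} to $b_t$ and the bijection $\zeta:\df(b)\twoheadrightarrow\df(b_t)$, together with the identity $b_t \circ \zeta = b$ just noted, gives $\tilde\kappa(b) = \tilde\kappa(b_t \circ \zeta) = \tilde\kappa(b_t)\cdot\zeta$. Then, multiplying on the right by $b^{-1}\circ t$ and invoking the semigroup action axiom~\ref{axsemi0}, I would compute
\[
\tilde\kappa(b)\cdot(b^{-1}\circ t)
= \tilde\kappa(b_t)\cdot\zeta\cdot(b^{-1}\circ t)
= \tilde\kappa(b_t)\cdot(\zeta \circ b^{-1} \circ t).
\]
It remains to simplify the inner composition: since $b$ is injective we have $b\circ b^{-1}=\pi_{\rng(b)}$, hence $\zeta \circ b^{-1} = b_t^{-1}\circ b \circ b^{-1} = b_t^{-1}\circ \pi_{\rng(b)} = b_t^{-1}$, the last equality using $\rng(b)=\rng(b_t)=\df(b_t^{-1})$. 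Substituting back yields $\tilde\kappa(b_t)\cdot(b_t^{-1}\circ t)$, which is exactly $\tilde\alpha(t)$ by~\eqref{defa0}, completing the argument.

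The proof is short, and the main obstacle is purely bookkeeping with the relational composition convention of the preliminaries (where $\rng(\lambda)\subseteq\df(\mu)$ is not assumed). I must verify carefully that $\zeta$ really is a bijection \emph{onto} $\df(b_t)$ so that Proposition~\ref{props2}~\ref{bij0} applies, and that the two simplifications $b_t\circ b_t^{-1}=\pi_{\rng(b_t)}$ and $b\circ b^{-1}=\pi_{\rng(b)}$, followed by absorption of the resulting partial identities, are legitimate under this convention. The degenerate case $b=\langle\rangle$ (which forces $t=\langle\rangle$ and $b_t=\langle\rangle$) is subsumed automatically, since then $\zeta$ is the empty map and both sides reduce to $\tilde\kappa(\langle\rangle)$.
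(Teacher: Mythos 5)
Your proof is correct and essentially the same as the paper's: the paper takes the bijection $\xi := b^{-1} \circ b_t : \df(b_t) \twoheadrightarrow \df(b)$ and rewrites $\tilde{\kappa}(b_t) = \tilde{\kappa}(b) \cdot \xi$ via Prop.~\ref{props2}\,\ref{bij0} and~\ref{axsemi0}, whereas you use the inverse bijection $\zeta = b_t^{-1} \circ b$ and rewrite $\tilde{\kappa}(b) = \tilde{\kappa}(b_t) \cdot \zeta$, which is the mirror-image of the identical argument. Your bookkeeping with the relational composition convention (in particular $b \circ b^{-1} = \pi_{\rng(b)}$ and the absorption $b_t^{-1} \circ \pi_{\rng(b)} = b_t^{-1}$) checks out.
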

\begin{proof}
Since $\rng(b)=\rng(t)^{\downarrow}=\rng(b_t)$, a bijection $\xi:\df(b_t) \twoheadrightarrow \df(b)$ is given by $\xi:=b^{-1} \circ b_t$.
Then $\tilde{\alpha}(t) = \tilde{\kappa}(b_t) \cdot (b_t^{-1} \circ t)
= \tilde{\kappa}(b) \cdot \xi \cdot (b_t^{-1} \circ t) = \tilde{\kappa}(b) \cdot (b^{-1} \circ t)$
by~\eqref{defa0}, Prop.~\ref{props2}\,\ref{bij0} and~\ref{axsemi0}.
\qed
\end{proof}
\begin{lemma}
\label{core0}
Let $b \in \mathrm{BTup}(H)$ and $vt_1\dots t_n \in H$ such that $\rng(b)=\{vt_1\dots t_n\}^{\downarrow}$
and $\eta_{vt_1\dots t_n} \leq b$. Set $a:=b|^{\{t_1,\dots,t_n\}^{\downarrow}}$. Then
\begin{enumerate}[label={\roman*)},leftmargin=4cm]
\item $\kappa(b) \cdot \pi_{\{x_1,\dots,x_{n+1}\}} = v$ \quad, \label{core1}
\item $\kappa(b) \cdot \pi_{\df(a)} = \kappa(a)$ \quad. \label{core2}
\end{enumerate}
\end{lemma}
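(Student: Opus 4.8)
The plan is to prove the single decomposition $\kappa(b) = \kappa(a) \wedge v$ and the \emph{projection identity} $\kappa(a)\cdot\pi_{\{x_1,\dots,x_n\}} = v\cdot\pi_{\{x_1,\dots,x_n\}}$, and then read off both parts by projecting the decomposition with the distributivity axiom~\ref{axdist0}. First I would unwind the ranges. Because terms are well-founded, $vt_1\dots t_n$ is not a subterm of any $t_i$, so $\rng(b)=\{vt_1\dots t_n\}^{\downarrow}$ is the disjoint union of $\{vt_1\dots t_n\}$ and $\{t_1,\dots,t_n\}^{\downarrow}=\rng(a)$. Since $b$ is injective and $\eta_{vt_1\dots t_n}\leq b$ forces $b(x_i)=t_i$ and $b(x_{n+1})=vt_1\dots t_n$, this means $\df(a)=\df(b)\setminus\{x_{n+1}\}$ with $x_1,\dots,x_n\in\df(a)$ and $x_{n+1}\notin\df(a)$; in particular $a$ is a base tuple over $H$, so $\kappa(a)$ is defined.

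Next I would split the finite infimum~\eqref{defk1} for $\kappa(b)$ along this disjoint union. For a factor indexed by $ws_1\dots s_m\in\rng(a)$, every element of $\rng(\eta_{ws_1\dots s_m})$ is a subterm of $ws_1\dots s_m$ and hence lies in the subterm-closed set $\rng(a)$, on which $a$ agrees with $b$; therefore $\eta_{ws_1\dots s_m}^{-1}\circ b=\eta_{ws_1\dots s_m}^{-1}\circ a$, and these factors assemble to $\kappa(a)$. For the single factor indexed by $vt_1\dots t_n$, injectivity of $b$ together with $\eta_{vt_1\dots t_n}\leq b$ forces $\eta_{vt_1\dots t_n}^{-1}\circ b=\pi_{\{x_1,\dots,x_{n+1}\}}=\pi_{\dom(v)}$, so that $v\cdot(\eta_{vt_1\dots t_n}^{-1}\circ b)=v$ by~\ref{axneut0}. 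Together these yield the decomposition
\[
\kappa(b) = \kappa(a) \wedge v \quad.
\]

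The second ingredient is the projection identity. Writing $s:=\langle x_1\mathord{:}t_1,\dots,x_n\mathord{:}t_n\rangle\in\mathrm{NTup}(H)$, I observe that $a$ is a base tuple over $H$ with $\rng(a)=\rng(s)^{\downarrow}$, so Prop.~\ref{welldef2} together with~\eqref{defa1} gives $\alpha(s)=\kappa(a)\cdot(a^{-1}\circ s)=\kappa(a)\cdot\pi_{\{x_1,\dots,x_n\}}$, using $a^{-1}\circ s=\pi_{\{x_1,\dots,x_n\}}$. On the other hand, membership $vt_1\dots t_n\in H$ gives $v\cdot\pi_{\{x_1,\dots,x_n\}}=\alpha(s)$ by Prop.~\ref{hcon0}, and the two combine to the stated identity. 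Since $\dom(v)=\{x_1,\dots,x_{n+1}\}$ is finite we have $v\neq 0$ by Prop.~\ref{props1}~\ref{finite0}, and right multiplication preserves nonzeroness (via~\ref{axpreimg0} and~\ref{finite0}); hence the identity forces $\kappa(a)\neq 0$, so that $\dom(\kappa(a))=\df(a)$ by Prop.~\ref{props2}~\ref{domain1}.

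Finally both parts are short. For~\ref{core1} I project $\kappa(b)=\kappa(a)\wedge v$ onto $Y:=\dom(v)=\{x_1,\dots,x_{n+1}\}$: as $\dom(v)\subseteq Y$, axiom~\ref{axdist0} gives $\kappa(b)\cdot\pi_Y=v\wedge(\kappa(a)\cdot\pi_Y)$, and by Prop.~\ref{props1}~\ref{astr0} together with $\dom(\kappa(a))\cap Y=\{x_1,\dots,x_n\}$ the second conjunct equals $\kappa(a)\cdot\pi_{\{x_1,\dots,x_n\}}=v\cdot\pi_{\{x_1,\dots,x_n\}}\geq v$ by the identity and~\ref{axproj0}, so the infimum is $v$. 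For~\ref{core2} I symmetrically project onto $\df(a)=\dom(\kappa(a))$: axiom~\ref{axdist0} gives $\kappa(b)\cdot\pi_{\df(a)}=\kappa(a)\wedge(v\cdot\pi_{\df(a)})$, and $v\cdot\pi_{\df(a)}=v\cdot\pi_{\{x_1,\dots,x_n\}}=\kappa(a)\cdot\pi_{\{x_1,\dots,x_n\}}\geq\kappa(a)$, so the infimum collapses to $\kappa(a)$. I expect the decomposition step to be the main obstacle, since justifying the two composition equalities requires careful bookkeeping with astrictions and with the injectivity and subterm-closure of base tuples; once it is in place, the remainder is routine use of~\ref{axdist0}, Prop.~\ref{props1}~\ref{astr0}, \ref{axneut0} and~\ref{axproj0}.
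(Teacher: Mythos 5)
Your proof is correct and follows essentially the same route as the paper's: you derive the same decomposition $\kappa(b)=\kappa(a)\wedge v$ by splitting the infimum in~\eqref{defk1}, establish the same projection identity $\kappa(a)\cdot\pi_{\{x_1,\dots,x_n\}}=v\cdot\pi_{\{x_1,\dots,x_n\}}$ via Prop.~\ref{hcon0} and Prop.~\ref{welldef2}, and finish both parts with~\ref{axdist0}, Prop.~\ref{props1}~\ref{astr0} and~\ref{axproj0} exactly as in the paper. The only (harmless) deviation is that you rule out $\kappa(a)=0$ outright via~\ref{axpreimg0} and Prop.~\ref{props1}~\ref{finite0}, where the paper instead handles that case separately with~\ref{axzero0}.
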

\begin{proof}
We obtain $\kappa(b) = (\bigwedge_{ws_1\dots s_m \in \rng(a)} w \cdot (\eta^{-1}_{ws_1\dots s_m} \circ b))
\wedge (v \cdot (\eta^{-1}_{vt_1\dots t_n} \circ b))$ from~\eqref{defk1} using $\rng(b) = \rng(a) \cup \{vt_1\dots t_n\}$.
Since $a \leq b$, we have $\eta^{-1}_{ws_1\dots s_m} \circ b = \eta^{-1}_{ws_1\dots s_m} \circ a$ for all $ws_1\dots s_m \in \rng(a)$,
and since $\eta_{vt_1\dots t_n} \leq b$, we have $\eta^{-1}_{vt_1\dots t_n} \circ b = \pi_{\{x_1,\dots,x_{n+1}\}}$.
So altogether, $\kappa(b) = \kappa(a) \wedge v \cdot \pi_{\{x_1,\dots,x_{n+1}\}}$. Since $v \cdot \pi_{\{x_1,\dots,x_{n+1}\}} = v$
by~\eqref{sign0} and~\ref{axneut0}, we arrive at
\begin{align}
\label{core3}
\kappa(b) &= \kappa(a) \wedge v \quad.
\end{align}
We have $vt_1\dots t_n \in H$, so $v\cdot\pi_{\{x_1,\dots,x_n\}} = \alpha(\langle x_1\mathord{:}t_1,\dots,x_n\mathord{:}t_n\rangle)$ by Prop.~\ref{hcon0}.
Moreover, $\alpha(\langle x_1\mathord{:}t_1,\dots,x_n\mathord{:}t_n\rangle) = \kappa(a) \cdot (a^{-1} \cdot
\langle x_1\mathord{:}t_1,\dots,x_n\mathord{:}t_n\rangle)$ by Prop.~\ref{welldef2}, since $\rng(a)=\{t_1,\dots,t_n\}^{\downarrow}$.
From $\eta_{vt_1\dots t_n} \leq b$ follows $\langle x_1\mathord{:}t_1,\dots,x_n\mathord{:}t_n \rangle \leq a$, so altogether we have
\begin{align}
\label{core4}
v \cdot \pi_{\{x_1,\dots,x_n\}} &= \kappa(a) \cdot \pi_{\{x_1,\dots,x_n\}} \quad.
\end{align}
\textbf{\ref{core1}}
This is obtained as follows (third equality detailed below):
\begin{multline*}
\kappa(b) \cdot \pi_{\{x_1,\dots,x_{n+1}\}}
\underset{\eqref{core3}}{=} (\kappa(a) \wedge v) \cdot \pi_{\{x_1,\dots,x_{n+1}\}}
\underset{\ref{axdist0}}{=} \kappa(a) \cdot \pi_{\{x_1,\dots,x_{n+1}\}} \wedge v \\
= \kappa(a) \cdot \pi_{\{x_1,\dots,x_n\}} \wedge v
\underset{\eqref{core4}}{=} v \cdot \pi_{\{x_1,\dots,x_n\}} \wedge v
\underset{\ref{axproj0}}{=} v \quad.
\end{multline*}
In the third equality, we use~\ref{axzero0} if $\kappa(a)=0$; otherwise, $\dom(\kappa(a)) = \df(a)$ by Prop.~\ref{props2}\,\ref{domain1},
and since $\df(a)=\df(b)\setminus\{x_{n+1}\}$, we obtain $\kappa(a) \cdot \pi_{\{x_1,\dots,x_{n+1}\}} = \kappa(a) \cdot \pi_{\{x_1,\dots,x_n\}}$
from Prop.~\ref{props1}\,\ref{astr0}.

\textbf{\ref{core2}}
As in the proof of~\ref{core1}, we obtain
$\kappa(b) \cdot \pi_{\df(a)} = (\kappa(a) \wedge v) \cdot \pi_{\df(a)} = \kappa(a) \wedge v \cdot \pi_{\{x_1,\dots,x_n\}}
= \kappa(a) \wedge \kappa(a) \cdot \pi_{\{x_1,\dots,x_n\}} = \kappa(a)$.
\qed
\end{proof}
\begin{proposition}
\label{catch0}
Let $vt_1\dots t_n \in H$. Then $\alpha(\eta_{vt_1\dots t_n}) = v$.
\end{proposition}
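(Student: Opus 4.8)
The plan is to reduce the claim in one step to Lemma~\ref{core0}~\ref{core1} by exhibiting the base tuple that Lemma implicitly refers to. Write $\eta := \eta_{vt_1\dots t_n} = \langle x_1\mathord{:}t_1,\dots,x_n\mathord{:}t_n,x_{n+1}\mathord{:}vt_1\dots t_n\rangle$. Since $vt_1\dots t_n \in H$ and $H$ is subterm-closed (by Prop.~\ref{hcon0}), we have $t_1,\dots,t_n \in H$ and $\{vt_1\dots t_n\}^{\downarrow} \subseteq H$; moreover $t_1,\dots,t_n$ are pairwise distinct by Prop.~\ref{hcon0}, and none of them equals $vt_1\dots t_n$ (a strictly larger term), so $\eta$ is injective. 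Thus $\eta \in \mathrm{NTup}(H)$ is injective with $\rng(\eta) = \{t_1,\dots,t_n,vt_1\dots t_n\}$ and $\rng(\eta)^{\downarrow} = \{vt_1\dots t_n\}^{\downarrow}$.

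First I would extend $\eta$ to a base tuple $b \in \mathrm{BTup}(H)$ with $\eta \leq b$ and $\rng(b) = \{vt_1\dots t_n\}^{\downarrow}$. This is possible because the subterm-closure is finite: map each subterm not already lying in $\rng(\eta)$ injectively to a variable outside $\{x_1,\dots,x_{n+1}\}$, preserving injectivity and yielding a subterm-closed range equal to $\{vt_1\dots t_n\}^{\downarrow}$. This $b$ satisfies exactly the hypotheses of Lemma~\ref{core0} (namely $\rng(b)=\{vt_1\dots t_n\}^{\downarrow}$ and $\eta_{vt_1\dots t_n} \leq b$), so Lemma~\ref{core0}~\ref{core1} gives $\kappa(b) \cdot \pi_{\{x_1,\dots,x_{n+1}\}} = v$.

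Next I compute $\alpha(\eta)$. Since $\eta \in \mathrm{NTup}(H)$ and $b \in \mathrm{BTup}(H)$ with $\rng(b) = \rng(\eta)^{\downarrow}$, the analogue of Prop.~\ref{welldef2} for the restrictions $\alpha,\kappa$ (as noted after~\eqref{defa1} and~\eqref{defk1}) gives $\alpha(\eta) = \kappa(b) \cdot (b^{-1} \circ \eta)$. It then remains to identify $b^{-1} \circ \eta$. From $\eta \leq b$ we get $b(x_i) = \eta(x_i)$ for $i=1,\dots,n+1$, and since $b$ is injective, $b^{-1}(\eta(x_i)) = x_i$; as $\rng(\eta) \subseteq \rng(b) = \df(b^{-1})$, the composition is defined on all of $\{x_1,\dots,x_{n+1}\}$ and nowhere else, so $b^{-1} \circ \eta = \pi_{\{x_1,\dots,x_{n+1}\}}$. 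Substituting, $\alpha(\eta) = \kappa(b) \cdot \pi_{\{x_1,\dots,x_{n+1}\}} = v$ by Lemma~\ref{core0}~\ref{core1}, as claimed.

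The existence and injectivity of $b$ is routine, flowing directly from finiteness of the subterm-closure. The point requiring care — and the main obstacle — is ensuring both hypotheses of Lemma~\ref{core0} hold and that $b^{-1} \circ \eta$ collapses to \emph{exactly} $\pi_{\{x_1,\dots,x_{n+1}\}}$ rather than to a larger or mis-indexed partial identity; this hinges on $t_1,\dots,t_n$ being pairwise distinct and distinct from $vt_1\dots t_n$, which Prop.~\ref{hcon0} guarantees. Once this is verified, the statement follows immediately.
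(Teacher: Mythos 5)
Your proposal is correct and follows exactly the paper's own proof: extend $\eta_{vt_1\dots t_n}$ to a base tuple $b$ with $\rng(b)=\{vt_1\dots t_n\}^{\downarrow}$, compute $\alpha(\eta_{vt_1\dots t_n}) = \kappa(b)\cdot(b^{-1}\circ\eta_{vt_1\dots t_n}) = \kappa(b)\cdot\pi_{\{x_1,\dots,x_{n+1}\}} = v$ via Prop.~\ref{welldef2} and Lemma~\ref{core0}\,\ref{core1}. You merely spell out the routine details the paper leaves implicit (injectivity of $\eta_{vt_1\dots t_n}$, existence of the extension $b$, and the identification $b^{-1}\circ\eta_{vt_1\dots t_n}=\pi_{\{x_1,\dots,x_{n+1}\}}$), all of which are verified correctly.
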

\begin{proof}
Let $b \in \mathrm{BTup}(H)$ be an extension of $\eta_{vt_1\dots t_n}$ with $\rng(b)=\{vt_1\dots t_n\}^{\downarrow}$.
Then $\alpha(\eta_{vt_1\dots t_n}) = \kappa(b) \cdot (b^{-1} \circ \eta_{vt_1\dots t_n}) = \kappa(b) \cdot \pi_{\{x_1,\dots,x_{n+1}\}} = v$
by Prop.~\ref{welldef2} and Lemma~\ref{core0}\,\ref{core1}.
\qed
\end{proof}
\begin{proposition}
\label{reduce0}
Let $a,b \in \mathrm{BTup}(H)$ with $a \leq b$. Then $\kappa(b) \cdot \pi_{\df(a)} = \kappa(a)$.
\end{proposition}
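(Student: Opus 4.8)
The plan is to prove the statement by strong induction on $\#\rng(b)$ (equivalently, on $\#\df(b)$), generalizing the single-term computation already carried out in Lemma~\ref{core0}\,\ref{core2}. The base case is $a=b$: here $\kappa(b)\cdot\pi_{\df(a)}=\kappa(b)$, which follows from \ref{axneut0} together with $\dom(\kappa(b))=\df(b)$ (Prop.~\ref{props2}\,\ref{domain1}) when $\kappa(b)\neq 0$, and from \ref{axzero0} when $\kappa(b)=0$. For the inductive step I would reduce the general case to the removal of a single maximal subterm.

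Concretely, if $a\neq b$, I would pick a term $w:=vt_1\dots t_n\in\rng(b)\setminus\rng(a)$ that is maximal with respect to the subterm order on $\rng(b)$; such a $w$ exists, and, since $\rng(a)$ is subterm-closed, no element of $\rng(b)\setminus\rng(a)$ is a proper subterm of an element of $\rng(a)$, so $w$ is in fact maximal in all of $\rng(b)$ and $\rng(b)\setminus\{w\}$ is still subterm-closed. Thus $c:=b|^{\rng(b)\setminus\{w\}}$ is a base tuple in $\mathrm{BTup}(H)$ with $a\leq c\leq b$ and $\#\rng(c)=\#\rng(b)-1$. Because $\df(a)\subseteq\df(c)$, I can split $\kappa(b)\cdot\pi_{\df(a)}=\kappa(b)\cdot\pi_{\df(c)}\cdot\pi_{\df(a)}$ using \ref{axsemi0}, so it suffices to establish the single-removal identity $\kappa(b)\cdot\pi_{\df(c)}=\kappa(c)$ and then apply the induction hypothesis to the pair $(a,c)$ to conclude $\kappa(c)\cdot\pi_{\df(a)}=\kappa(a)$.

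The single-removal identity is the crux, and here I would imitate the proof of Lemma~\ref{core0}. Splitting off the factor belonging to $w$ in \eqref{defk1} and using $c\leq b$ (so that $\eta_p^{-1}\circ b=\eta_p^{-1}\circ c$ for every $p\in\rng(c)$) gives $\kappa(b)=\kappa(c)\wedge v\cdot(\eta_w^{-1}\circ b)$; then \ref{axdist0} (applicable since $\dom(\kappa(c))\subseteq\df(c)$, the case $\kappa(c)=0$ being immediate by \ref{axzero0}) together with \ref{axsemi0} reduces the claim to $\kappa(c)\leq v\cdot\zeta$, where $\zeta:=(\eta_w^{-1}\circ b)|_{\df(c)}$ is the bijection $b^{-1}(t_i)\mapsto x_i$ obtained from $\eta_w^{-1}\circ b$ by dropping the position of $w$. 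This is precisely where Lemma~\ref{core0} obtained its identity \eqref{core4} for free from the hypothesis $\eta_w\leq b$; since here the positions of $b$ are arbitrary, I would instead invoke the membership condition of Prop.~\ref{hcon0}, namely $v\cdot\pi_{\{x_1,\dots,x_n\}}=\alpha(\langle x_1\mathord{:}t_1,\dots,x_n\mathord{:}t_n\rangle)$, and \ref{lax2} to rewrite $v\cdot\zeta=\alpha(\langle y_1\mathord{:}t_1,\dots,y_n\mathord{:}t_n\rangle)$ with $y_i:=b^{-1}(t_i)$. Evaluating this via Prop.~\ref{welldef2} on the sub-base-tuple $a':=c|^{\{t_1,\dots,t_n\}^{\downarrow}}\leq c$ yields $v\cdot\zeta=\kappa(a')\cdot\pi_{\{y_1,\dots,y_n\}}$, and the induction hypothesis for $(a',c)$ (legitimate because $\#\rng(c)<\#\rng(b)$) gives $\kappa(a')=\kappa(c)\cdot\pi_{\df(a')}$, whence $v\cdot\zeta=\kappa(c)\cdot\pi_{\{y_1,\dots,y_n\}}\geq\kappa(c)$ by \ref{axproj0}. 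The main obstacle is exactly this point: the single-removal step cannot simply cite Lemma~\ref{core0}, because $\rng(b)$ need not be the subterm-closure of $w$, so the analog of \eqref{core4} has to be recovered by applying the induction hypothesis to the \emph{second} sub-base-tuple $a'$ — which is what forces the induction to run on $\#\rng(b)$ rather than on $\#(\rng(b)\setminus\rng(a))$.
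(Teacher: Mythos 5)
Your argument is correct in substance but takes a genuinely different route from the paper. The paper proceeds in three stages: a Special Case with $\rng(b)=\{vt_1\dots t_n\}^{\downarrow}$ and $a=b|^{\{t_1,\dots,t_n\}^{\downarrow}}$, handled by transporting $b$ via Prop.~\ref{props2}\,\ref{bij0} to a normalized base tuple $\hat{b}$ with $\eta_{vt_1\dots t_n}\leq\hat{b}$ so that Lemma~\ref{core0}\,\ref{core2} applies; an Extended Case $a\lessdot b$, handled through the decomposition $b=a\oplus b^*$ and Prop.~\ref{props2}\,\ref{oplus0}; and a General Case by induction on $\#(\rng(b)\setminus\rng(a))$, adjoining a \emph{minimal} element of the difference at each step. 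You instead run a single strong induction on $\#\rng(b)$, remove a \emph{maximal} element $w$ of $\rng(b)\setminus\rng(a)$ (your observation that such a $w$ is maximal in all of $\rng(b)$, so that $\rng(b)\setminus\{w\}$ remains subterm-closed, is correct and is the mirror image of the paper's minimal-addition step), and recover the analog of~\eqref{core4} in arbitrary coordinates by a \emph{second} application of the induction hypothesis, to the pair $(a',c)$. This dispenses with Lemma~\ref{core0}\,\ref{core2}, the normalization trick, and the $\oplus$-decomposition, at the cost of invoking the IH twice per step; your closing remark correctly identifies why this forces the induction to run on $\#\rng(b)$. The supporting details — the factor-splitting $\kappa(b)=\kappa(c)\wedge v\cdot(\eta_w^{-1}\circ b)$ via $c\leq b$, the application of~\ref{axdist0} with the case $\kappa(c)=0$ treated separately (necessary, since Prop.~\ref{sane0} is only available \emph{after} this proposition), and the reduction $\kappa(b)\cdot\pi_{\df(a)}=\kappa(b)\cdot\pi_{\df(c)}\cdot\pi_{\df(a)}$ — all check out.

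One citation is, as literally stated, circular: you invoke~\ref{lax2} for $\alpha$ to rewrite $v\cdot\zeta=\alpha(\langle y_1\mathord{:}t_1,\dots,y_n\mathord{:}t_n\rangle)$, but~\ref{lax2} is only established for $\alpha$ in Prop.~\ref{lplus0}, whose proof goes through Prop.~\ref{welldef3}, which in turn cites Prop.~\ref{reduce0} — the very statement you are proving. Fortunately, the instance you need does not require~\ref{lax2}: writing $s_0:=\langle x_1\mathord{:}t_1,\dots,x_n\mathord{:}t_n\rangle$ and $s:=s_0\circ\zeta=\langle y_1\mathord{:}t_1,\dots,y_n\mathord{:}t_n\rangle$, both $\alpha(s_0)$ and $\alpha(s)$ can be evaluated by Prop.~\ref{welldef2} against the same base tuple $a'$ (legitimate since $\rng(a')=\{t_1,\dots,t_n\}^{\downarrow}=\rng(s_0)^{\downarrow}=\rng(s)^{\downarrow}$), and then $\alpha(s_0)\cdot\zeta=\kappa(a')\cdot(a'^{-1}\circ s_0)\cdot\zeta=\kappa(a')\cdot(a'^{-1}\circ s)=\alpha(s)$ by~\ref{axsemi0}, where indeed $a'^{-1}\circ s=\pi_{\{y_1,\dots,y_n\}}$ — which is exactly the identity your next sentence uses anyway. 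Replacing the appeal to~\ref{lax2} by this short computation, which uses only tools already at your disposal (and already cited in the same sentence of your proposal), makes the proof complete and non-circular.
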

\begin{proof}
\textit{Special Case:}
We assume $\rng(b)=\{vt_1\dots t_n\}^{\downarrow}$ for some $vt_1\dots t_n \in H$, and $a = b|^{\{t_1,\dots,t_n\}^{\downarrow}}$.
Since $t_1,\dots,t_n,vt_1\dots t_n$ are pairwise distinct (cf. Prop.~\ref{hcon0}), there exists a base tuple $\hat{b}$ with
$\rng(\hat{b})=\{vt_1\dots t_n\}^{\downarrow}$ such that $\hat{b}^{-1}(t_1)=x_1,\dots,\hat{b}^{-1}(t_n)=x_n$ and $\hat{b}^{-1}(vt_1\dots t_n)=x_{n+1}$,
or shortly $\eta_{vt_1\dots t_n} \leq \hat{b}$. Set $\hat{a} := \hat{b}|^{\{t_1,\dots,t_n\}^{\downarrow}}$.
Since $\rng(\hat{b})=\rng(b)$ and $\rng(\hat{a})=\rng(a)$, we obtain the bijections $\hat{b}^{-1} \circ b:\df(b) \rightarrow \df(\hat{b})$
and $\hat{a}^{-1} \circ a:\df(a) \rightarrow \df(\hat{a})$, and we have
$(\hat{b}^{-1} \circ b) \circ \pi_{\df(a)} = \hat{b}^{-1} \circ (b \circ \pi_{\df(a)}) = \hat{b}^{-1} \circ a = \hat{a}^{-1} \circ a$, hence also
\begin{align}
\label{switch0}
(\hat{b}^{-1} \circ b) \circ \pi_{\df(a)} = \pi_{\df(\hat{a})} \circ (\hat{a}^{-1} \circ a) \quad.
\end{align}
We now obtain $\kappa(b) \cdot \pi_{\df(a)} = \kappa(\hat{b}) \cdot (\hat{b}^{-1} \circ b) \cdot \pi_{\df(a)}
= \kappa(\hat{b}) \cdot \pi_{\df(a')} \cdot (\hat{a}^{-1} \circ a)
= \kappa(\hat{a}) \cdot (\hat{a}^{-1} \circ a) = \kappa(a)$ using Prop.~\ref{props2}\,\ref{bij0}, \eqref{switch0} and \ref{axsemi0},
and Lemma~\ref{core0}\,\ref{core2}.

\textit{Extended Case:}
We assume $a \lessdot b$, i.e. $a = b|^{\rng(b)\setminus\{vt_1\dots t_n\}}$ for some $vt_1\dots t_n \in \rng(b)$.
Setting $b^*:=b|^{\{vt_1\dots t_n\}^\downarrow}$ and $a^*:=a|^{\{t_1,\dots,t_n\}^\downarrow}$, we obtain
$b=a \oplus b^*$, $a^*=b^*|^{\{t_1,\dots,t_n\}^{\downarrow}}$, and trivially $a = a \oplus a^*$. With this, we obtain
$\kappa(b) \cdot \pi_{\df(a)} = (\kappa(a) \wedge \kappa(b^*)) \cdot \pi_{\df(a)} = \kappa(a) \wedge \kappa(b^*) \cdot \pi_{\df(a^*)}
= \kappa(a) \wedge \kappa(a^*) = \kappa(a)$ using Prop.~\ref{props2}\,\ref{oplus0}, \ref{axdist0} and Prop.~\ref{props1}\,\ref{astr0},
Special Case 1, and again Prop.~\ref{props2}\,\ref{oplus0}.

\textit{General Case:} Shown by induction over $\#(\rng(b)\setminus\rng(a))$.
\textit{Base Step:} We obtain $\kappa(b) \cdot \pi_{\df(b)} = \kappa(b)$ from Prop.~\ref{props2}\,\ref{domain1} and~\ref{axneut0}
(from~\ref{axzero0} if $\kappa(b)=0$). \textit{Induction Step:} Select $vt_1\dots t_n$ minimal in $\rng(b)\setminus\rng(a)$.
Then $\tilde{a}:=b|^{\rng(a)\cup\{vt_1\dots t_n\}}$ is a base tuple with $a \lessdot \tilde{a} \leq b$, and thus
$\kappa(b) \cdot \pi_{\df(a)} = \kappa(b) \cdot \pi_{\df(\tilde{a})} \cdot \pi_{\df(a)} = \kappa(\tilde{a}) \cdot \pi_{\df(a)} = \kappa(a)$
by~\ref{axsemi0}, induction hypothesis, and the Extended Case.
\qed
\end{proof}
\begin{proposition}
\label{welldef3}
If $\rng(t) \subseteq \rng(b)$ then $\alpha(t) = \kappa(b) \cdot (b^{-1} \circ t)$.
\end{proposition}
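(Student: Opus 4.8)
The plan is to reduce to the already-established case of Prop.~\ref{welldef2}, where the base tuple has range exactly $\rng(t)^{\downarrow}$, and then to bridge from that special base tuple to the arbitrary $b$ using the reduction formula of Prop.~\ref{reduce0}. The point of the present statement over Prop.~\ref{welldef2} is precisely that $b$ may be strictly larger than needed, containing subterms not reachable from $t$.

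First I would note that, since $b$ is a base tuple, $\rng(b)$ is subterm-closed; combined with the hypothesis $\rng(t) \subseteq \rng(b)$ this yields $\rng(t)^{\downarrow} \subseteq \rng(b)$. I would then set $a := b|^{\rng(t)^{\downarrow}}$, the astriction of $b$ to $\rng(t)^{\downarrow}$, which discards exactly those columns of $b$ whose value is not a subterm of an entry of $t$. Because $b$ is injective with subterm-closed range and $\rng(t)^{\downarrow} \subseteq \rng(b)$, the tuple $a$ is again a base tuple, with $\rng(a) = \rng(b) \cap \rng(t)^{\downarrow} = \rng(t)^{\downarrow}$ and $a \in \mathrm{BTup}(H)$; moreover $a \leq b$, since $b$ extends $a$ by construction, and $a,b$ agree as functions on $\df(a)$.

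Next I would apply Prop.~\ref{welldef2} to the base tuple $a$, whose range is exactly $\rng(t)^{\downarrow}$, obtaining $\alpha(t) = \kappa(a) \cdot (a^{-1} \circ t)$, and Prop.~\ref{reduce0} to the pair $a \leq b$, obtaining $\kappa(a) = \kappa(b) \cdot \pi_{\df(a)}$. Substituting the latter into the former and applying the semigroup law~\ref{axsemi0} gives $\alpha(t) = \kappa(b) \cdot (\pi_{\df(a)} \circ a^{-1} \circ t)$. It then remains to check the purely relational identity $\pi_{\df(a)} \circ a^{-1} \circ t = b^{-1} \circ t$. This splits into two observations: first, $\rng(a^{-1}) = \df(a)$, so the partial identity $\pi_{\df(a)}$ is absorbed, $\pi_{\df(a)} \circ a^{-1} = a^{-1}$; and second, since $a$ and $b$ are both injective and agree on $\df(a)$, their inverses agree on $\rng(a) = \rng(t)^{\downarrow}$, whence $a^{-1} \circ t = b^{-1} \circ t$, because every value of $t$ lies in $\rng(t) \subseteq \rng(t)^{\downarrow}$.

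The only step demanding genuine care is this final relational bookkeeping under the composition-as-relations convention: one must confirm that restricting $b$ down to $\rng(t)^{\downarrow}$ leaves the reindexing $b^{-1} \circ t$ unchanged, and that the leftover partial identity $\pi_{\df(a)}$ collapses. Everything else is a direct chaining of Prop.~\ref{welldef2}, Prop.~\ref{reduce0} and axiom~\ref{axsemi0}.
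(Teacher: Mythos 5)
Your proposal is correct and is essentially identical to the paper's proof: both set $a := b|^{\rng(t)^{\downarrow}}$ (valid since $\rng(b)$ is subterm-closed), apply Prop.~\ref{welldef2} to $a$, Prop.~\ref{reduce0} to $a \leq b$, and chain the results via~\ref{axsemi0}. The only difference is that you spell out the relational bookkeeping $\pi_{\df(a)} \circ a^{-1} \circ t = b^{-1} \circ t$, which the paper leaves implicit; your verification of it is accurate.
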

\begin{proof}
Note that $\rng(t)^{\downarrow} \subseteq \rng(b)$, since $b$ is subterm-closed.
So $a := b|^{\rng(t)^{\downarrow}}$ satisfies $a \leq b$ and $\rng(a)=\rng(t)^{\downarrow}$.
Then $\kappa(b) \cdot (b^{-1} \circ t) = \kappa(b) \cdot \pi_{\df(a)} \cdot (a^{-1} \circ t) = \kappa(a) \cdot (a^{-1} \circ t) = \alpha(t)$
by~\ref{axsemi0}, Prop.~\ref{reduce0} and Prop.~\ref{welldef2}.
\qed
\end{proof}
\begin{proposition}
\label{sane0}
$\kappa(b) \neq 0$.
\end{proposition}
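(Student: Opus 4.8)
The plan is to prove $\kappa(b) \neq 0$ by induction on $\#\rng(b)$ (equivalently on $\#\df(b)$, since $b$ is injective), using Prop.~\ref{reduce0} as the inductive workhorse. For the base case $\rng(b) = \emptyset$, i.e. $b = \langle\rangle$, the defining infimum~\eqref{defk1} is empty, so $\kappa(\langle\rangle) = 1$, and $1 \neq 0$ holds by Prop.~\ref{props1}~\ref{onezero0}.

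For the inductive step, assume $\rng(b) \neq \emptyset$. The key combinatorial observation is that, since $\rng(b)$ is finite, subterm-closed and nonempty, it contains a term $vt_1\dots t_n$ of maximal term-depth, which therefore cannot be a proper subterm of any element of $\rng(b)$. Removing such a maximal term keeps the range subterm-closed: setting $a := b|^{\rng(b)\setminus\{vt_1\dots t_n\}}$, any $ws_1\dots s_m \in \rng(a)$ still has its immediate subterms $s_1,\dots,s_m$ in $\rng(b)$ (as $\rng(b)$ is subterm-closed), and none of these can equal $vt_1\dots t_n$, for otherwise $vt_1\dots t_n$ would be a proper subterm of an element of $\rng(b)$. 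Hence $\rng(a)$ is subterm-closed and $\rng(a) \subseteq \rng(b) \subseteq H$, so $a \in \mathrm{BTup}(H)$ with $a \leq b$ and $\#\rng(a) = \#\rng(b) - 1$.

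By the induction hypothesis $\kappa(a) \neq 0$. Prop.~\ref{reduce0} then gives $\kappa(a) = \kappa(b) \cdot \pi_{\df(a)}$; were $\kappa(b) = 0$, axiom~\ref{axzero0} would force $\kappa(a) = 0 \cdot \pi_{\df(a)} = 0$, contradicting $\kappa(a) \neq 0$. Therefore $\kappa(b) \neq 0$, completing the induction. The statement is thus essentially a corollary of Prop.~\ref{reduce0}, and I expect no genuine obstacle beyond the bookkeeping point flagged above: one must delete a \emph{subterm-maximal} element to preserve membership in $\mathrm{BTup}(H)$, since deleting an arbitrary element could destroy subterm-closure and leave a restriction $a$ that is not a base tuple, in which case Prop.~\ref{reduce0} would not apply.
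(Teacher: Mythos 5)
Your proof is correct, and the combinatorial point you flag (delete a subterm-\emph{maximal} element so that the restriction stays a base tuple) is handled properly; but you take a longer route than the paper. The paper exploits the fact that Prop.~\ref{reduce0} is already stated for \emph{arbitrary} pairs $a \leq b$ in $\mathrm{BTup}(H)$, not just covers, and that the empty tuple $\langle\rangle$ is itself a base tuple with $\langle\rangle \leq b$ for every $b$: it applies Prop.~\ref{reduce0} once with $a = \langle\rangle$, so that if $\kappa(b)=0$ then $1 = \kappa(\langle\rangle) = \kappa(b)\cdot\pi_{\emptyset} = 0$ by~\ref{axzero0}, contradicting Prop.~\ref{props1}\,\ref{onezero0}. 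Your base case is exactly the paper's key computation $\kappa(\langle\rangle)=1$ (empty infimum), while your inductive step re-inlines, in dualized form, the induction that the paper already performed inside the General Case of the proof of Prop.~\ref{reduce0} (there one \emph{adjoins} subterm-minimal elements of $\rng(b)\setminus\rng(a)$; you \emph{delete} subterm-maximal ones --- the same subterm-closure bookkeeping). So given the full strength of Prop.~\ref{reduce0}, your induction is sound but redundant. What your version buys is that it only invokes the covering instance $a \lessdot b$ of Prop.~\ref{reduce0} (the paper's ``Extended Case''), so it would survive in a development where only that case had been proved; what the paper's version buys is a two-line contradiction with no additional combinatorics.
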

\begin{proof}
Suppose $\kappa(b)=0$. Then $1 = \kappa(\langle\rangle) = \kappa(b) \cdot \pi_{\df(\emptyset)} = 0$ by Prop.~\ref{reduce0} and~\ref{axzero0},
which contradicts Prop.~\ref{props1}\,\ref{onezero0}.
\qed
\end{proof}
\begin{proposition}
\label{lplus0}
$\alpha:\mathrm{NTup}(H) \rightarrow V$ satisfies \ref{lax1}, \ref{lax2} and
\begin{align*}
\ref{lax3}^{+}\quad\alpha(t)=v \cdot \pi_{\df(t)} \Rightarrow
 \exists \tilde{t} \in \mathrm{NTup}(H): t \leq \tilde{t} \text{ and } \alpha(\tilde{t}) = v \quad.
\end{align*}
\end{proposition}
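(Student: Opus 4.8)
The plan is to read off \ref{lax1} and \ref{lax2} directly from the representation \eqref{defa1} and the lemmas already established, and to reserve the real work for \ref{lax3}$^+$. For \ref{lax1}, since $\kappa(b_t) \neq 0$ by Prop.~\ref{sane0}, axiom~\ref{axpreimg0} gives $\dom(\alpha(t)) = (b_t^{-1} \circ t)^{-1}(\dom(\kappa(b_t)))$, and $\dom(\kappa(b_t)) = \df(b_t)$ by Prop.~\ref{props2}~\ref{domain1}; because $\rng(t) \subseteq \rng(b_t)$, the map $b_t^{-1} \circ t$ is defined on all of $\df(t)$ with range inside $\df(b_t)$, so the preimage is exactly $\df(t)$. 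For \ref{lax2}, note $\rng(t \circ \lambda) \subseteq \rng(t) \subseteq \rng(b_t)$, so Prop.~\ref{welldef3} applies to $t \circ \lambda$; combined with \eqref{defa1}, \ref{axsemi0} and associativity of composition this yields $\alpha(t \circ \lambda) = \kappa(b_t) \cdot (b_t^{-1} \circ t \circ \lambda) = \alpha(t) \cdot \lambda$.

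For \ref{lax3}$^+$ assume $\alpha(t) = v \cdot \pi_{\df(t)}$. First I would record two consequences. If $v = 0$ then $\alpha(t) = 0$ by \ref{axzero0}, contradicting that $\dom(\alpha(t)) = \df(t)$ is finite (Prop.~\ref{props1}~\ref{dom0}); hence $v \neq 0$, and then \ref{axpreimg0} turns $\df(t) = \dom(\alpha(t)) = \dom(v \cdot \pi_{\df(t)}) = \df(t) \cap \dom(v)$ into $\df(t) \subseteq \dom(v)$. Next I would reduce to the case that $t$ is injective. Whenever $t(y_i) = t(y_j)$ with $y_i \neq y_j$, axiom~\ref{axproj0} gives $v \leq v \cdot \pi_{\df(t)} = \alpha(t) \leq \alpha(t) \cdot \pi_{\{y_i,y_j\}}$, while \ref{lax2}, \ref{lax1}, \ref{axdom0}, \ref{axmult0} and \ref{axdxy0} show $\alpha(t) \cdot \pi_{\{y_i,y_j\}} = \alpha(\langle y_i\mathord{:}t(y_i)\rangle) \cdot \tfrac{y_iy_i}{y_iy_j} \leq d_{y_iy_j}$; hence $v \leq d_{y_iy_j}$. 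Choosing a set $R \subseteq \df(t)$ of representatives, one per $t$-fibre, and letting $\delta$ be the folding on $\dom(v)$ that fixes $R \cup (\dom(v)\setminus\df(t))$ and sends each remaining coordinate to its representative, these inequalities give $v \leq e_\delta$, so Prop.~\ref{duplication1} yields $v \cdot \delta = v$, and by construction $t = t^\circ \circ \delta$ for the injective $t^\circ := t|_R$.

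Granting \ref{lax3}$^+$ for the injective tuple $t^\circ$ (which satisfies $\alpha(t^\circ) = v \cdot \pi_{\df(t^\circ)}$ by \ref{lax2}), it supplies $\tilde t^\circ \geq t^\circ$ with $\df(\tilde t^\circ) = \dom(v)$ and $\alpha(\tilde t^\circ) = v$, whereupon $\tilde t := \tilde t^\circ \circ \delta$ extends $t$ and satisfies $\alpha(\tilde t) = \alpha(\tilde t^\circ) \cdot \delta = v \cdot \delta = v$. It remains to prove \ref{lax3}$^+$ for injective $t$, which I would do by induction on $\#(\dom(v) \setminus \df(t))$; the base case $\df(t) = \dom(v)$ is immediate from \ref{axneut0}. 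For the step, pick $y \in \dom(v) \setminus \df(t)$ and a bijection $\rho : \df(t) \cup \{y\} \twoheadrightarrow \{x_1,\dots,x_{m+1}\}$ with $\rho(y) = x_{m+1}$, set $\hat v := v \cdot \pi_{\df(t)\cup\{y\}} \cdot \rho^{-1}$, and put $c_i := t(\rho^{-1}(x_i))$ for $i \leq m$. The crux is to verify that $g := \hat v\,c_1 \dots c_m$ is a term of $H$: the $c_i$ are pairwise distinct because $t$ is injective, and the membership condition of Prop.~\ref{hcon0}, namely $\hat v \cdot \pi_{\{x_1,\dots,x_m\}} = \alpha(\langle x_1\mathord{:}c_1,\dots,x_m\mathord{:}c_m\rangle)$, follows by rewriting both sides as $v \cdot (\rho^{-1} \circ \pi_{\{x_1,\dots,x_m\}})$ using \ref{axsemi0}, \ref{axneut0} and \ref{lax2}. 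Since $g$ is strictly larger than each $c_i \in \rng(t)$, the extension $t' := t \oplus \langle y\mathord{:}g\rangle$ is again injective, and identifying $t' = \eta_{\hat v c_1\dots c_m} \circ \rho$ gives $\alpha(t') = \alpha(\eta_{\hat v c_1\dots c_m}) \cdot \rho = \hat v \cdot \rho = v \cdot \pi_{\df(t')}$ via Prop.~\ref{catch0} and \ref{lax2}; as the gap has dropped by one, the induction hypothesis completes the construction.

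The main obstacle is this construction of the witness $g$: it forces the detour into standard position through $\rho$ and the verification that the $H$-membership condition of Prop.~\ref{hcon0} coincides with the hypothesis on $t$, and — logically prior to it — the reduction to injective $t$, since a tuple with repeated entries cannot serve as the argument list of a single ground term, so the diagonal inequalities $v \leq d_{y_iy_j}$ together with Prop.~\ref{duplication1} are exactly what is needed to reinstate the dropped duplicate coordinates at the end.
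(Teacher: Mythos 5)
Your proof is correct and takes essentially the same route as the paper's: the same reduction of non-injective tuples via a fibre-representative folding $\delta$ with $v \leq e_\delta$ and Prop.~\ref{duplication1}, and the same induction on $\#(\dom(v)\setminus\df(t))$ whose step manufactures a fresh ground term $\hat v c_1 \dots c_m \in H$ via Prop.~\ref{hcon0} and evaluates the extended tuple with Prop.~\ref{catch0}. The differences are only organizational --- the paper normalizes once to standard position (its Special and Extended Cases) where you rename with $\rho$ inside each induction step, and it applies the injective stage to the target $v \cdot \pi_{\rng(\tilde{\delta})}$ where you apply it to $v$ directly (both valid) --- and the one detail you elide, $\dom(\hat v) = \{x_1,\dots,x_{m+1}\}$ as required by Prop.~\ref{hcon0}, follows immediately from~\ref{axpreimg0} since $v \neq 0$.
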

\begin{proof}
\ref{lax1}: We obtain $\dom(\alpha(t))=\dom(\kappa(b_t) \cdot (b_t^{-1} \circ t)) = \df(t)$ from~\eqref{defa1}, Prop.~\ref{sane0} and~\ref{axpreimg0}.
\ref{lax2}: Choose $b$ with $\rng(t \circ \lambda) \subseteq \rng(t) \subseteq \rng(b)$.
Then $\alpha(t \circ \lambda) = \kappa(b) \cdot (b^{-1} \circ t \circ \lambda) = \kappa(b) \cdot (b^{-1} \circ t) \cdot \lambda = \alpha(t) \cdot \lambda$
by Prop.~\ref{welldef3} and~\ref{axsemi0}. 

\ref{lax3}$^{+}$: \textit{Special Case:} We assume $t$ is injective, $\df(t)=\{x_1,\dots,x_n\}$ and $\dom(v)=\{x_1,\dots,x_{n+k}\}$.
Proof by induction over k = $\#(\dom(v)\setminus\df(t))$.
\textit{Base Step:} We obtain $\alpha(t) = v \cdot \pi_{\df(t)} = v \cdot \pi_{\dom(v)} = v$ from the assumptions and~\ref{axneut0}.
\textit{Induction Step:} Set $u := v \cdot \pi_{\{x_1,\dots,x_{n+1}\}}$. By assumption, $t=:\langle x_1\mathord{:}t_1,\dots,x_n\mathord{:}t_n\rangle$
is injective, i.e. $t_1,\dots,t_n$ are pairwise distinct,
and we obtain $u \cdot \pi_{\{x_1,\dots,x_n\}} = v \cdot \pi_{\{x_1,\dots,x_{n+1}\}} \cdot \pi_{\{x_1,\dots,x_n\}}
= v \cdot \pi_{\df(t)} = \alpha(t)$ using~\ref{axsemi0}, and $\dom(u)=\{x_1,\dots,x_{n+1}\}$ using~\ref{axpreimg0}.
So $ut_1\dots t_n \in H$ by Prop.~\ref{hcon0}, and we obtain $\alpha(\eta_{ut_1\dots t_n}) = u = v \cdot \pi_{\{x_1,\dots,x_{n+1}\}}
= v \cdot \pi_{\df(\eta_{ut_1\dots t_n})}$ by Prop.~\ref{catch0} and the definition of $u$. Since $\eta_{ut_1\dots t_n}$ is again injective,
the induction hypothesis can be applied, i.e. we obtain $\tilde{t} \geq \eta_{ut_1\dots t_n} > t$ with $\alpha(\tilde{t}) = v$.

\textit{Extended Case:} Now we only assume $t$ is injective.
From $\alpha(t) = v \cdot \pi_{\df(t)}$ follows $v \leq \alpha(t)$ by~\ref{axproj0}, so $\df(t) \subseteq \dom(v)$
by Prop.~\ref{props1}~\ref{domain0}. Suppose $v=0$, then $\alpha(t)=0$ by~\ref{axzero0}, which means $\dom(\alpha(t))$ is infinite;
but $\dom(\alpha(t))=\df(t)$ by~\ref{lax1}, which is finite, contradiction! So $v \neq 0$. Thus, by Prop.~\ref{props1}\,\ref{finite0},
we have $\#\df(t) = n$ and $\#\dom(v)=n + k$ for some $n,k \in \mathbb{N}$.
Accordingly, let $\xi:\{x_1,\dots,x_n\} \rightarrow \df(t)$ be a bijection, and $\tilde{\xi}:\{x_1,\dots,x_{n+k}\} \rightarrow \dom(v)$
a bijective extension of $\xi$. Since $t$ is injective, $s:=t \circ \xi$ is injective; moreover $\df(s)=\{x_1,\dots,x_n\}$,
$\dom(v \cdot \tilde{\xi}) = \{x_1,\dots,x_{n+k}\}$ by~\ref{axpreimg0}, and
\begin{align*}
v \cdot \tilde{\xi} \cdot \pi_{\{x_1,\dots,x_n\}}
\underset{\ref{axsemi0}}{=} v \cdot \pi_{\df(t)} \cdot \xi
\underset{\text{Assmpt.}}{=} \alpha(t) \cdot \xi
\underset{\ref{lax2}}{=} \alpha(t \circ \xi)
= \alpha(s) \quad;
\end{align*}
so the Special Case can be applied, i.e. $s$ has an extension $\tilde{s}$ with $\alpha(\tilde{s})=v \cdot \tilde{\xi}$,
in particular $\df(\tilde{s})=\{x_1,\dots,x_{n+k}\}$ by~\ref{lax1}. Then $\tilde{t} := \tilde{s} \circ \tilde{\xi}^{-1}$ satisfies
\begin{align*}
\alpha(\tilde{t}) = \alpha(\tilde{s} \circ \tilde{\xi}^{-1})
\underset{\ref{lax2}}{=} \alpha(\tilde{s}) \cdot \tilde{\xi}^{-1}
= v \cdot \tilde{\xi} \cdot \tilde{\xi}^{-1}
\underset{\ref{axsemi0}}{=} v \cdot \pi_{\dom(v)}
\underset{\ref{axneut0}}{=} v \quad,
\end{align*}
and $\tilde{t}$ extends $t$, since $\tilde{t} \circ \pi_{\df(t)} = \tilde{s} \circ \tilde{\xi}^{-1} \circ \pi_{\df(t)}
= \tilde{s} \circ \pi_{\{x_1,\dots,x_n\}} \circ \xi^{-1} = s \circ \xi^{-1} = t \circ \xi \circ \xi^{-1} = t$.

\emph{General Case:} For arbitrary $t \in \mathrm{NTup}(H)$, the restriction $s := t|_{\rng(t^{-r})}$
(to the range of the right inverse $t^{-r}$) is injective. The function $\delta:\df(t) \rightarrow \df(s)$,
defined by $\delta := t^{-r} \circ t$, allows to recover $t$ from $s$; more specifically,
\begin{align}
s &= t \circ \pi_{\rng(\delta)} \quad,\\
t &= s \circ \delta \quad. \label{reconstruction1}
\end{align}
We have $v \leq v \cdot \pi_{\df(t)} = \alpha(t)$ by~\ref{axproj0} and the assumption, and moreover
\begin{align}
\alpha(t)
= \alpha(t \circ t^{-r} \circ t)
= \alpha(t \circ \delta)
\underset{\ref{lax2}}{=} \alpha(t) \cdot \delta
\underset{\text{Prop.}\,\ref{duplication0}}{\leq} e_\delta \quad,
\end{align}
so taken together $v \leq e_\delta$. The function $\tilde{\delta}$ with
\begin{align}
\label{deltaext0}
\tilde{\delta}(x) := \begin{cases}
\delta(x) \quad \text{ if } x \in \df(t) \\
x \qquad \text{ if } x \in \dom(v)\setminus\df(t)
\end{cases}
\end{align}
extends $\delta$ to $\dom(v)$, and by~\ref{axdom0} we have $v \leq d_{xx} = d_{x\tilde{\delta}(x)}$ for all $x \in \dom(v)\setminus\df(t)$,
so in fact $v \leq e_{\tilde{\delta}}$ (cf. Def.~\ref{ediag0}), which means
\begin{align}
\label{reconstruction0}
v = v \cdot \tilde{\delta}
\end{align}
by Prop.~\ref{duplication1}. Considering $\rng(\tilde{\delta}) \cap \df(s) = \df(s) = \df(t) \cap \rng(\delta)$ in the first equation below, we obtain
\begin{align*}
v \cdot \pi_{\rng(\tilde{\delta})} \cdot \pi_{\df(s)}
\underset{\ref{axsemi0}}{=} v \cdot \pi_{\df(t)} \cdot \pi_{\rng(\delta)}
&\underset{\text{Assmpt.}}{=} \alpha(t) \cdot \pi_{\rng(\delta)} \\
&\underset{\ref{lax2}}{=} \alpha(t \circ \pi_{\rng(\delta)})
= \alpha(s) \quad,
\end{align*}
and since $s$ is injective, by the Extended Case above, $s$ has an extension $\tilde{s}$ with
\begin{align}
\label{reduction0}
\alpha(\tilde{s}) = v \cdot \pi_{\rng(\tilde{\delta})} \quad.
\end{align}
Then $\tilde{t} := \tilde{s} \circ \tilde{\delta}$ satisfies $\alpha(\tilde{t})=v$, since
$\alpha(\tilde{t}) = \alpha(\tilde{s} \circ \tilde{\delta})
= \alpha(\tilde{s}) \cdot \tilde{\delta}
= v \cdot \pi_{\rng(\tilde{\delta})} \cdot \tilde{\delta}
= v \cdot \tilde{\delta}
= v$
by~\ref{lax2}, \eqref{reduction0}, \ref{axsemi0} and \eqref{reconstruction0}.
Moreover, we obtain
$\tilde{t} \circ \pi_{\df(t)}
= \tilde{s} \circ \tilde{\delta} \circ \pi_{\df(t)}
= \tilde{s} \circ \delta
= s \circ \delta
= t$
using~\eqref{deltaext0} and~\eqref{reconstruction1},
which means that $\tilde{t}$ extends $t$. This concludes the proof of~\ref{lax3}$^{+}$.
\qed
\end{proof}
\begin{proposition}
\label{quasi2}
$\alpha:\mathrm{NTup}(H) \rightarrow V$ is a quasi-labeling with $\rng(\alpha) = V \setminus \{0\}$.
\end{proposition}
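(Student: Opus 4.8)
The plan is to lean on Prop.~\ref{lplus0}, which already supplies \ref{lax1}, \ref{lax2} and the strengthened property \ref{lax3}$^{+}$, so that only the genuine axiom \ref{lax3} and the range condition $\rng(\alpha)=V\setminus\{0\}$ remain to be established. A useful preliminary observation is that $\alpha(t)\neq 0$ for every $t$: by \ref{lax1} we have $\dom(\alpha(t))=\df(t)$, which is finite, whereas $\dom(0)=\mathbf{var}$ is infinite; equivalently this is Prop.~\ref{props1}~\ref{finite0}. Consequently $0\notin\rng(\alpha)$, which already gives the inclusion $\rng(\alpha)\subseteq V\setminus\{0\}$.

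To derive \ref{lax3} from \ref{lax3}$^{+}$, I would assume $\df(t)\subseteq\dom(v)$ and $\alpha(t)\leq v\cdot\pi_{\df(t)}$, and set $w:=v\wedge\alpha(t)$. Since $\dom(\alpha(t))=\df(t)$, axiom~\ref{axdist0} yields $w\cdot\pi_{\df(t)}=(\alpha(t)\wedge v)\cdot\pi_{\df(t)}=\alpha(t)\wedge v\cdot\pi_{\df(t)}=\alpha(t)$, the last equality using the hypothesis $\alpha(t)\leq v\cdot\pi_{\df(t)}$. In particular $w\neq 0$ (otherwise $\alpha(t)=0\cdot\pi_{\df(t)}=0$ by~\ref{axzero0}, contradicting the above), so Prop.~\ref{props1}~\ref{dominf0} gives $\dom(w)=\dom(v)\cup\df(t)=\dom(v)$. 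Applying \ref{lax3}$^{+}$ to $t$ and $w$, the equality $\alpha(t)=w\cdot\pi_{\df(t)}$ produces some $\tilde t\geq t$ with $\alpha(\tilde t)=w$. By \ref{lax1} we then have $\df(\tilde t)=\dom(w)=\dom(v)$, so $\tilde t\in H^{\dom(v)}$; moreover $\tilde t|_{\df(t)}=t$ and $\alpha(\tilde t)=w\leq v$, which is precisely \ref{lax3}.

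For the remaining inclusion $V\setminus\{0\}\subseteq\rng(\alpha)$, I would fix $v\neq 0$ and use the empty tuple. As computed in the construction, $\alpha(\langle\rangle)=1$, while \ref{axone0} gives $v\cdot\pi_{\emptyset}=1$ because $v\neq 0$; hence $\alpha(\langle\rangle)=1=v\cdot\pi_{\df(\langle\rangle)}$. A single application of \ref{lax3}$^{+}$ then yields $\tilde t\geq\langle\rangle$ with $\alpha(\tilde t)=v$, so $v\in\rng(\alpha)$. Together with the first paragraph this gives $\rng(\alpha)=V\setminus\{0\}$, and since \ref{lax1}, \ref{lax2}, \ref{lax3} hold, $\alpha$ is a quasi-labeling.

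I expect the main obstacle to be the derivation of \ref{lax3}: the crux is recognising that meeting $v$ with $\alpha(t)$ and projecting back to $\df(t)$ recovers $\alpha(t)$ \emph{exactly}, so that the equality-form \ref{lax3}$^{+}$ can discharge the inequality-form \ref{lax3}. This is exactly where the distributivity axiom \ref{axdist0} and the hypothesis $\alpha(t)\leq v\cdot\pi_{\df(t)}$ are jointly needed, and care must be taken to verify $w\neq 0$ before invoking Prop.~\ref{props1}~\ref{dominf0} to pin down $\dom(w)=\dom(v)$. The surjectivity step and the nonvanishing of $\alpha$ are then routine consequences of \ref{axone0}, \ref{lax1} and Prop.~\ref{props1}~\ref{finite0}.
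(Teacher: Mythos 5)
Your proof is correct and follows essentially the same route as the paper: citing Prop.~\ref{lplus0} for \ref{lax1}, \ref{lax2}, \ref{lax3}$^{+}$, deriving \ref{lax3} by setting $w:=v\wedge\alpha(t)$ and using \ref{axdist0} to get $w\cdot\pi_{\df(t)}=\alpha(t)$, obtaining surjectivity onto $V\setminus\{0\}$ from $\alpha(\langle\rangle)=1=v\cdot\pi_{\emptyset}$ via \ref{lax3}$^{+}$, and excluding $0$ from the range by finiteness of $\df(t)$. Your explicit verification that $w\neq 0$ before invoking Prop.~\ref{props1}~\ref{dominf0} is a small touch of added care (the paper gets this implicitly from $w=\alpha(\tilde t)\in\rng(\alpha)$), but the argument is the same.
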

\begin{proof}
Proposition~\ref{lplus0} shows \ref{lax1}, \ref{lax2} and~\ref{lax3}$^{+}$. For $v \in V\setminus\{0\}$,
we obtain $\alpha(\langle\rangle) = \kappa(\langle\rangle) \cdot \pi_{\emptyset} = 1 \cdot \pi_{\emptyset} = 1 = v \cdot \pi_{\emptyset}$
from Prop.~\ref{welldef2} and~\ref{axone0}, so by~\ref{lax3}$^{+}$ there exists $\tilde{t} \geq \langle\rangle$ with $\alpha(\tilde{t}) = v$.
On the other hand, $\dom(\alpha(t))=\df(t)$ by~\ref{lax1}, which is finite, so $\alpha(t) \neq 0$. This shows $\rng(\alpha)=V\setminus\{0\}$.

\ref{lax3}: Assume $\alpha(t) \leq v \cdot \pi_{\df(t)}$ and $\df(t) \subseteq \dom(v)$. Set $w:=v \wedge \alpha(t)$, then
$w \cdot \pi_{\df(t)} = (v \wedge \alpha(t)) \cdot \pi_{\df(t)} = v \cdot \pi_{\df(t)} \wedge \alpha(t) = \alpha(t)$ by~\ref{axdist0},
\ref{lax1}, \ref{axneut0} and the assumption.
So by~\ref{lax3}$^{+}$ there exists $\tilde{t} \geq t$ with $\alpha(\tilde{t}) = w \leq v$.
Also $t \in H^{\dom(v)}$, since $\df(\tilde{t}) = \dom(w) = \dom(v) \wedge \dom(\alpha(t)) = \dom(v) \cup \df(t) = \dom(v)$
by~\ref{lax1}, Prop.~\ref{props1}\,\ref{dominf0} and the assumption $\df(t) \subseteq \dom(v)$.
\qed
\end{proof}

\subsection{Representation Theorem}
\begin{theorem}[Representation Theorem]
Every orbital semilattice $V$ can be embedded in an orbital table semilattice $\mathrm{Tab}(G)$.
\end{theorem}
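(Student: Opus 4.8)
The plan is to assemble the embedding from the three major results already established, so that no genuinely new argument is required; the work lies only in choosing the right carrier set $G$ and in checking that the surjectivity-onto-$V\setminus\{0\}$ condition survives each transfer. Concretely, I would chain the construction of a quasi-labeling (Prop.~\ref{quasi2}), its promotion to a tuple labeling by quotienting (Thm.~\ref{labeling0}), and the extent homomorphism (Thm.~\ref{embedding0}).

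First I would fix the set $H \subseteq \mathrm{GT}(S^V)$ of ground terms built in Section~\ref{construction1} and invoke Prop.~\ref{quasi2} to obtain a quasi-labeling $\alpha:\mathrm{NTup}(H) \rightarrow V$ with $\rng(\alpha) = V \setminus \{0\}$. Applying Thm.~\ref{labeling0} then produces the equivalence relation $\sim$ on $H$ together with the induced map $\bar{\alpha}:\mathrm{NTup}(H/\mathord{\sim}) \rightarrow V$, which is a tuple labeling. Setting $G := H/\mathord{\sim}$, I would record that $\bar{\alpha}$ inherits the range of $\alpha$: writing $q$ for the quotient map $g \mapsto [g]$, the definition~\eqref{quotient0} gives $\bar{\alpha}(q \circ t) = \alpha(t)$, and since every element of $\mathrm{NTup}(H/\mathord{\sim})$ is of the form $q \circ t$, it follows that $\rng(\bar{\alpha}) = \rng(\alpha) = V \setminus \{0\}$.

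With a tuple labeling $\bar{\alpha}$ whose range is exactly $V \setminus \{0\}$, I would apply Thm.~\ref{embedding0} to conclude that $\ext_{\bar{\alpha}}:V \rightarrow \mathrm{Tab}(G)$ is an injective homomorphism of orbital semilattices, i.e. the desired embedding. For this to be meaningful one needs $G \neq \emptyset$, so that $\mathrm{Tab}(G)$ is itself an orbital semilattice by Thm.~\ref{orbital0}; this holds because $d_{x_1x_1} \neq 0$ by~\ref{axdxx0} and $\dom(d_{x_1x_1}) = \{x_1\}$ by Prop.~\ref{props1}~\ref{domdxy0}, so $d_{x_1x_1} \in S^V_0 = H^{(1)} \subseteq H$, whence $H$, and therefore $G = H/\mathord{\sim}$, is nonempty.

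I do not expect a substantive obstacle in this final step, since each move is a citation of an earlier result; the only things that could go wrong are the two bookkeeping checks just described, namely the transfer of the range condition from $\alpha$ to $\bar{\alpha}$ and the nonemptiness of the carrier. The genuine difficulty of the representation theorem has already been discharged in engineering $H$ so that $\alpha$ is a well-defined quasi-labeling hitting every nonzero element, which is precisely the content of Propositions~\ref{hcon0}--\ref{quasi2}.
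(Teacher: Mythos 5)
Your proposal is correct and follows essentially the same route as the paper: take the quasi-labeling $\alpha$ with $\rng(\alpha)=V\setminus\{0\}$ from Prop.~\ref{quasi2}, pass to the tuple labeling $\bar{\alpha}$ via Thm.~\ref{labeling0} noting from~\eqref{quotient0} that the range is preserved, and conclude with the injective homomorphism $\ext_{\bar{\alpha}}$ from Thm.~\ref{embedding0}. Your extra check that $G=H/\mathord{\sim}$ is nonempty (via $d_{x_1x_1}\in S^V_0=H^{(1)}$) is a legitimate bookkeeping point that the paper leaves implicit, and it is argued correctly.
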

\begin{proof}
The quasi-labeling $\alpha$ of Sect.~\ref{construction1} satisfies $\rng(\alpha) = V \setminus \{0\}$ (cf. Prop.~\ref{quasi2}).
The tuple labeling $\bar{\alpha}$ of Thm.~\ref{labeling0} then also satisfies $\rng(\bar{\alpha}) = V \setminus \{0\}$,
as is evident from~\eqref{quotient0}. So by Thm.~\ref{embedding0}, $\ext_{\bar{\alpha}}$ is the claimed embedding.
\qed
\end{proof}

\bibliographystyle{splncs03}
\bibliography{arxiv}
\end{document}